\newtheorem{lem}{Lemma}[section]
\newtheorem{prop}{Proposition}[section]
\newtheorem{cor}{Corollary}[section]
\newtheorem{thm}{Theorem}[section]
\newtheorem*{assum}{Assumption (A)}
\theoremstyle{definition}
\theoremstyle{definition}
\newtheorem*{definition*}{Definition}
\theoremstyle{remark}
\theoremstyle{remark}
\newtheorem{remark}{Remark}[section]
\newtheorem*{remarks*}{Remarks}
\newtheorem*{remark*}{Remark}
\numberwithin{equation}{section}
\newcommand{\N}{\mathbb{N}}
\newcommand{\R}{\mathbb{R}}
\newcommand{\ov}{\overline}
\renewcommand{\ker}{\mathrm{ker}}
\newcommand{\pt}{\partial}
\newcommand{\DD}{\Delta}
\newcommand{\be}{\begin{equation}}
\newcommand{\ee}{\end{equation}}
\newcommand{\eps}{\varepsilon}
\newcommand{\KK}{\mathcal{K}}
\newcommand{\sgn}{\mathrm{sgn}}
\newcommand{\eu}{e}
\newcommand{\id}{\mathrm{Id}}
\newcommand{\Ds}{(-\Delta)^{s}}
\newcommand{\CC}{C}
\renewcommand{\phi}{\varphi}
\newcommand{\Ts}{\mathbf{T}}
\newcommand{\Tss}{\mathbf{T}^{(\sigma)}_\lambda}
\newcommand{\Fs}{\mathbf{F}}
\newcommand{\Lloc}{L_{\mathrm{loc}}}
\newcommand{\Cloc}{C_{\mathrm{loc}}}
\newcommand{\LL}{\mathcal{L}}
\newcommand{\df}{\mathrm{d}}
\def\section{\@startsection{section}{1}%
  \z@{1.5\linespacing\@plus\linespacing}{.5\linespacing}%
  {\normalfont\bfseries\large\centering}}
\def\@cite#1#2{[\textbf{#1\if@tempswa , #2\fi}]}
\def\@biblabel#1{[\textbf{#1}]}
\begin{document}
\title[Fractional Gelfand Equation in $\R$]{Existence and Uniqueness for \\ the Fractional Gelfand Equation in $\R$}

\author{Florian P.~Lanz}
\address{F. P. Lanz, Universit\"at Basel, Departement Mathematik und Informatik, Spiegelgasse 1, CH-4051 Basel, Switzerland}%
\email{florian.lanz@unibas.ch}

\author{Enno Lenzmann}
\address{E. Lenzmann, University of Basel, Department of Mathematics and Computer Science, Spiegelgasse 1, CH-4051 Basel, Switzerland.}%
\email{enno.lenzmann@unibas.ch}

\begin{abstract}
We prove existence, symmetry and uniqueness of solutions to the fractional Gelfand equation
 $$
 \Ds u =  \eu^{u} \quad \mbox{in $\R$} \quad \mbox{with} \quad \int_\R  e^u \, dx < +\infty
 $$
for all exponents $s \in (\frac{1}{2},1)$. Furthermore, we show $u$ has finite Morse index and that its linearized operator is nondegenerate. 

 Our arguments are based on a fixed point scheme in terms of the function $v= \sqrt{e^u}$ and we devise a nonlocal shooting method involving (locally) compact nonlinear maps. We also study existence, symmetry and uniqueness of solutions to $\Ds u = K e^u$ in $\R$ with $K e^u \in L^1(\R)$ for a general class of positive, even and monotone-decreasing functions $K > 0$.
\end{abstract}

\maketitle

\setcounter{tocdepth}{1} 
\tableofcontents

%%%%%%%%%%%%%%%%%%%%%%%%%%%%%%
\section{Introduction and Main Results} \label{sec:intro}

In this paper, we study solutions to the fractional Gelfand equation of the form
\be \label{eq:Gelf}
\Ds u = K e^u \quad \mbox{in $\R$} \quad \mbox{with} \quad \int_{\R} K e^u \, dx < +\infty,
\ee
where $K:\R \to \R$ is a prescribed function. Our main focus will be put on the important case of constant 
$$
K(x) \equiv 1.
$$ 
However, further results for non-constant positive $K(x) > 0$ will be discussed as well. Following standard conventions, we use $\Ds$ to denote the fractional Laplacian of order $s \in (0,1)$. For sufficiently regular and decaying functions $\phi$, we recall the singular integral expression
$$
\Ds \phi(x) = C_s \mathrm{PV} \int_{\R} \frac{\phi(x)-\phi(y)}{|x-y|^{1+2s}} \, dy = C_s \lim_{\eps \to 0^+} \int_{|x-y| \geq \eps} \frac{\phi(x)-\phi(y)}{|x-y|^{1+2s}} \, dy
$$ 
with the normalization constant $C_s = \frac{4^{s}}{\sqrt{\pi}} \frac{\Gamma(\frac{1}{2}+s)}{|\Gamma(-s)|}$. To have a mathematically meaningful notion of solutions to \eqref{eq:Gelf}, we shall always assume that $e^u \in \Lloc^1(\R)$ and $u \in L_s(\R)$, where
$$
L_s(\R) = \big \{ u \in \Lloc^1(\R) \mid \int_\R \frac{|u(x)|}{1+|x|^{1+2s}} \, dx < +\infty \big \}.
$$
We recall that $u$ is said to be a (distributional) solution to \eqref{eq:Gelf} whenever we have
$$
\int_{\R} u \Ds \phi \, dx = \int_\R e^{u} \phi \, dx 
$$
for all test functions $\phi \in \CC^\infty_c(\R)$. 

We mention that fractional Gelfand equations (in various space dimensions as well as on bounded domains) can be seen as a natural nonlocal generalization of the Liouville type equation $-\DD u = e^u$, which has been intensively studied in the area of nonlinear elliptic PDEs. We refer the reader to \cite{HyYa-22, DuNg-22, DaMa-17, RoSe-14, AhLe-22, FaWeYa-25, Ro-14, DeMaPiPr-25} for recent works on fractional Gelfand type equations in various settings.

\subsection{Existence, Symmetry, and Uniqueness for $K(x) \equiv 1$}

A particularly important case of \eqref{eq:Gelf} arises in the case of constant positive functions $K$. By a simple scaling argument, it suffices to consider the case 
$$
K(x) \equiv 1.
$$
We start with the following result about regularity and symmetry of solutions. In fact, the proofs  are straightforward adaptations of classical regularity estimates and the moving plane method. For the reader's convenience, we provide the details of the proofs in Section \ref{sec:prelim} below.
 
\begin{thm}[Smoothness and Symmetry] \label{thm:reg_sym}
Suppose $s \in (\frac 1 2, 1)$ and let $u \in L_s(\R)$ solve
\be \label{eq:Gelf2}
\Ds u = e^u \quad \mbox{with} \quad \int_\R e^u \, dx < +\infty.
\ee
Then $u$ is smooth, bounded from above and it satisfies $u(x) = O(|x|^{2s-1})$ as $|x| \to +\infty$. 

Moreover, $u$ is even and strictly monotone-decreasing around some point $x_0 \in \R$, i.\,e., we have $u(x) = u_0(|x-x_0|)$ with some function $u_0(r) < u_0(s)$ whenever $r > s \geq 0$.
\end{thm}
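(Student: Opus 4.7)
The plan is to establish regularity and a pointwise upper bound by a bootstrap argument, derive the polynomial asymptotic from a Riesz-type representation, and then obtain radial symmetry via the moving plane method adapted to our nonlocal and non-decaying setting.

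For regularity, since $e^u \in L^1(\R) \cap \Lloc^1(\R)$ and $u \in L_s(\R)$, standard fractional regularity (in the spirit of Silvestre) applied to $\Ds u = e^u$ first places $u$ in $\Lloc^p(\R)$ for some $p > 1$; exponentiating and iterating yields $e^u \in \Lloc^\infty(\R)$. More directly, since $s > 1/2$, one can work with an explicit Riesz-type representation of the form
\[
u(x) = p(x) - c_s \int_\R \bigl( |x-y|^{2s-1} - |y|^{2s-1} \bigr) e^{u(y)} \, dy
\]
with $c_s > 0$ and $p$ an affine function forced by a Liouville argument for $\Ds$. Since $e^u \in L^1(\R)$ and the kernel is controlled by $C|x|^{2s-1} + C|x||y|^{2s-2}$ via the mean value theorem (using $2s - 2 < 0$ for the tail), the integral is well-defined and yields a uniform upper bound for $u$. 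Once $e^u$ is locally bounded, the Hölder and Schauder estimates for the fractional Laplacian bootstrap to $u \in C^\infty(\R)$.

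Plugging this into the representation, for $|x| \to \infty$ the dominant contribution is
\[
-c_s |x|^{2s-1} \int_\R e^{u(y)} \, dy,
\]
which gives $u(x) = O(|x|^{2s-1})$ with a negative leading coefficient (so in particular $u(x) \to -\infty$), while a further Liouville-type argument forces the affine part $p$ to be constant.

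For the symmetry, I would run the fractional moving plane method in one dimension. Setting $u_\lambda(x) = u(2\lambda - x)$ and $w_\lambda = u_\lambda - u$, the antisymmetric function $w_\lambda$ satisfies
\[
\Ds w_\lambda = e^{u_\lambda} - e^u = c_\lambda(x)\, w_\lambda
\]
on $\{x < \lambda\}$ with $c_\lambda \geq 0$ by the mean value theorem. Because $u \to -\infty$, the inequality $w_\lambda \geq 0$ on $\{x < \lambda\}$ holds automatically once $-\lambda$ is sufficiently large, so one can start the plane sweep from $\lambda = -\infty$; the fractional maximum principle for antisymmetric functions (in the spirit of Chen--Li--Ou or Jarohs--Weth) then allows $\lambda$ to be pushed up to a critical value $\lambda_0 \in \R$ where $w_{\lambda_0} \equiv 0$, yielding evenness about $x_0 = \lambda_0$; strict monotonicity on each side follows from the strong maximum principle. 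The main difficulty is the non-decay of $u$: since $u \to -\infty$ only polynomially, the nonlocal integrals defining $\Ds w_\lambda$ as well as the initialization of the sweep must be controlled via the asymptotic from the preceding step, but once this is in place the remaining steps amount to a direct adaptation of the classical fractional moving plane template.
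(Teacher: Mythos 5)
Your overall skeleton (Riesz-type representation, upper bound and $O(|x|^{2s-1})$ asymptotics, then moving planes) matches the paper's, but there is a genuine gap at the point where you dispose of the linear part of the affine function $p$. A Liouville argument for $\Ds$ only tells you that $u$ minus the Riesz-type potential is affine, $p(x)=A+Bx$ (and $A+Bx\in L_s(\R)$ precisely because $s>\frac12$, so it cannot be excluded on function-space grounds); no further ``Liouville-type argument'' can force $B=0$. What kills $B$ is the hypothesis $\int_\R e^u\,dx<+\infty$: if $B>0$, then since $2s-1<1$ one has $u(x)\ge (B-\eps)x-C$ for $x\ge0$, hence $\int_0^\infty e^u\,dx=+\infty$, a contradiction (this is exactly Proposition \ref{prop:good_decay}; the paper even remarks that for rapidly decaying weights $K$ solutions with genuine linear growth are expected, so integrability must be invoked). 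Your ``uniform upper bound for $u$'' read off from the representation also tacitly presupposes $B=0$, so the logical order must be: first remove $B$ via integrability, then deduce boundedness from above and the asymptotics. A smaller point: the kernel bound $C|x|^{2s-1}+C|x||y|^{2s-2}$ is both unnecessary and shaky when only $e^u\in L^1$ is known (the factor $\int |y|^{2s-2}e^{u(y)}dy$ need not converge a priori); the elementary inequality $\bigl||x-y|^{2s-1}-|y|^{2s-1}\bigr|\le|x|^{2s-1}$ is what is needed, and it simultaneously yields the global H\"older bound that starts the smoothness bootstrap.

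In the symmetry step, the claim that the initialization of the sweep ``holds automatically once $-\lambda$ is sufficiently large because $u\to-\infty$'' is not correct: for $x$ just to the left of the plane the reflected point lies just to the right, both values of $u$ are comparable (and very negative), and the sign of $w_\lambda$ there is precisely what has to be proved. The initialization is a genuine quantitative step; the paper performs it on the integral equation (Proposition \ref{prop:movingplane1}) by absorbing a weighted $L^1$ norm of $(u-u_\lambda)_+$ on the bad set, using the decay $e^u\le Ce^{-a|x|^{2s-1}}$ to make the relevant supremum small for $|\lambda|$ large, and the same mechanism drives the continuation step; strict monotonicity is then obtained not from a strong maximum principle but from $\pt_x u_0=-H_\alpha(e^{u_0})<0$ for $x>0$ via Proposition \ref{prop:riesz_integral} and Lemma \ref{lem:H_a}. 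Your differential route through maximum principles for antisymmetric functions may well be workable here, since $w_\lambda$ is bounded and tends to $0$ at infinity (the leading $-c|x|^{2s-1}$ contributions cancel), but as written the decisive estimates --- the start of the sweep and the smallness/narrow-region argument near the critical plane for a function that is unbounded below --- are exactly the points left unproved, so the symmetry part is currently a sketch rather than a proof.
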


Next, we turn to the question of existence of solutions to \eqref{eq:Gelf2}, which constitutes the first main result in this paper.

\begin{thm}[Existence] \label{thm:existence}
For any $s \in (\frac 1 2, 1)$, there exists a solution $u \in L_s(\R)$ to \eqref{eq:Gelf2}.
\end{thm}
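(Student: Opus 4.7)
The plan is to recast \eqref{eq:Gelf2} via the substitution $v := e^{u/2}$. Under this change of unknown, the mass condition $\int_\R e^u \, dx < \infty$ becomes $v \in L^2(\R)$, and the equation turns into $\Ds (2\log v) = v^2$. Because $s > \tfrac{1}{2}$, the fundamental solution of $\Ds$ on $\R$ is, up to sign, the growing kernel $|x|^{2s-1}$, so the natural inverse sending $v^2$ back to $u$ must be renormalized: introduce
\[
\Pi(v)(x) := -a_s \int_\R \bigl( |x-y|^{2s-1} - |y|^{2s-1} \bigr) v(y)^2 \, dy
\]
with a positive constant $a_s$ chosen so that $\Ds \Pi(v) = v^2$ distributionally. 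By construction $\Pi(v)(0) = 0$, and the monotone decay supplied by Theorem~\ref{thm:reg_sym} guarantees that the renormalized integral converges with the expected asymptotics $\Pi(v)(x) \sim -c|x|^{2s-1}$. Setting $T(v) := e^{\Pi(v)/2}$, any fixed point of $T$ among positive even functions gives a solution of \eqref{eq:Gelf2} with normalization $u(0)=0$.

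Next I would seek a fixed point of $T$ on a closed convex set $X$ of even, strictly positive, monotone-decreasing functions with $v(0) = 1$ and a priori bounds on $\|v\|_{L^2}$ and $\|v\|_{L^\infty}$. The normalization is preserved automatically since $T(v)(0) = e^{\Pi(v)(0)/2} = 1$, and symmetry and monotonicity are inherited from $v^2$ through the symmetric, monotone structure of the renormalized kernel. Continuity of $T$ in, say, the $L^2_{\mathrm{loc}}$ topology follows directly from dominated convergence, while local compactness comes from an a priori H\"older estimate $\Pi(v) \in C^{\alpha}_{\mathrm{loc}}(\R)$ obtained by bounding the renormalized Riesz integral in the range $2s-1 \in (0,1)$, so the image of bounded subsets of $X$ is relatively compact in $C^0_{\mathrm{loc}}$ by Arzel\`{a}--Ascoli.

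Because the original equation on $\R$ is scale invariant ($v(x) \mapsto \lambda^s v(\lambda x)$ sends solutions to solutions upon renormalizing $u(0)$), the fixed-point problem is at best locally---not globally---compact, and a direct application of Schauder's theorem is obstructed by concentration and vanishing. I would therefore implement a \emph{nonlocal shooting} scheme: introduce a one-parameter family of truncated operators $T_R$, for instance by restricting the source $v^2$ to $[-R,R]$ or multiplying by a smooth cutoff, produce fixed points $v_R$ via Schauder's theorem or Leray--Schauder degree on a suitably large invariant ball in $X$, and then pass to the limit $R \to \infty$. An alternative realization is continuation in a coupling parameter $\lambda$ in $\Ds u = \lambda e^u$, starting from a controllable small-$\lambda$ regime and reaching $\lambda=1$ through degree theory.

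The hard part will be the limit passage $R \to \infty$: one must rule out both concentration of the $v_R$ into a spike near the origin and dispersion of their mass to infinity. The normalization $v_R(0)=1$, monotonicity, and the upper bound on $u_R$ supplied by Theorem~\ref{thm:reg_sym} should pin the scale and yield uniform two-sided bounds on $\int v_R^2 \, dx$ and on $\|v_R\|_{L^\infty}$; combined with the local compactness of $T_R$, this produces $v_R \to v_\infty$ in $C^0_{\mathrm{loc}}$ with $v_\infty(0)=1$. A secondary technical issue is ensuring convergence of $\Pi(v_\infty)$ for the limiting $v_\infty \in L^2(\R)$, which relies on the observation that monotone decreasing $L^1$ densities satisfy $v_\infty^2(y) = O(1/|y|)$, enough to control the renormalized kernel in the range $s < 1$.
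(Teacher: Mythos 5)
Your reformulation is essentially the paper's: with $v=e^{u/2}$, your renormalized potential $\Pi(v)$ coincides (via Proposition \ref{prop:riesz_integral}) with $-\int_0^x H_\alpha(v^2)$, so your map $T$ is exactly the paper's $\Ts_\lambda$ with $\lambda=1$, and you correctly identify the central obstruction (no obvious invariant convex set, only local compactness). The divergence is in how the obstruction is resolved, and this is where the proposal has a genuine gap. The entire weight of the argument rests on the limit $R\to\infty$, and the uniform-in-$R$ two-sided bounds on $\int v_R^2$ and the uniform decay needed for tightness are simply asserted: you say the normalization $v_R(0)=1$, monotonicity, and ``the upper bound on $u_R$ supplied by Theorem \ref{thm:reg_sym}'' should pin the scale. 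But Theorem \ref{thm:reg_sym} is a statement about exact solutions of \eqref{eq:Gelf2}; it gives nothing uniform for the truncated problems $\Ds u_R = \chi_R e^{u_R}$, and normalization plus monotonicity alone are perfectly compatible with the mass $\int v_R^2$ drifting to $0$ (concentration at the origin, $v_R$ collapsing to a spike of height $1$) or to $+\infty$ (slower and slower decay), in which case the $C^0_{\mathrm{loc}}$ limit either fails to satisfy the mass condition or is not a solution at all. The paper's substitute for this missing step is quantitative and non-obvious: a Pohozaev-type identity for fixed points combined with the reverse Hardy--Littlewood--Sobolev inequality (Lemma \ref{lem:HLS}), which yields $\sigma$-independent $L^p$ bounds down to some $p<2$; together with the symmetric-decreasing structure this gives uniform pointwise decay, tightness in $L^2$, and a uniform stretched-exponential bound (Lemma \ref{lem:nice}), which is what makes the limit passage work. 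Nothing in your outline plays this role, and without some such identity the ``hard part'' you flag remains unproved.

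There are two further soft spots. First, even at fixed $R$, Schauder ``on a suitably large invariant ball'' does not work as stated: truncating the source $v^2$ to $[-R,R]$ does not remove the degeneracy at small mass, since $T_R(v)(x)\approx e^{-\frac{a_s}{2}(\int_{-R}^R v^2)\,|x|^{2s-1}}$ has $L^2$ norm blowing up as the truncated mass shrinks, so no fixed ball is invariant. One would need either barrier-type constraints (a lower envelope built into the convex set, with $R$-dependent constants) or, as the paper does, a damping of the \emph{output} by $e^{-\frac{1}{2}\sigma^2x^2}$, which makes $\Tss$ globally compact on $X_\alpha$ with an unconditional a priori bound so that Schaefer--Schauder applies with no invariant-set construction at all. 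Second, the proposed alternative of continuation in the coupling constant $\lambda$ in $\Ds u=\lambda e^u$ is vacuous: setting $\tilde u=u+\log\lambda$ removes $\lambda$ entirely, so there is no genuinely ``small-$\lambda$'' regime to start from. In short, the skeleton (substitution, renormalized kernel, approximation, limit) matches the paper, but the decisive quantitative ingredient --- the uniform a priori estimate that rules out concentration and spreading in the limit --- is missing and is not supplied by the facts you invoke.
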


\begin{remark*}
Further below, we shall address the questions about the stability of solutions $u$. In particular, we see that $u$ has finite Morse index and thus it is stable outside a compact subset in $\R$. This complements the non-existence result in \cite{DuNg-22} for solutions of \eqref{eq:Gelf2} that are stable on all of $\R$; see below for more details on this issue. 
\end{remark*}

As our second main result in this paper, we  establish uniqueness of solutions for \eqref{eq:Gelf2}. Note that, by translational and scaling symmetry of this equation, we can only expect uniqueness up to the transformation
\be \label{eq:symmetry}
u(x) \mapsto u\left (\mu(x+y) \right ) + 2 s \log \mu ,
\ee 
which maps solutions into solutions for given $y \in \R$ and $\mu > 0$. The following result shows that uniqueness of solutions to \eqref{eq:Gelf2} in fact holds up to these symmetries. 

\begin{thm}[Uniqueness]  \label{thm:unique}
Let $s \in (\frac 1 2, 1)$. Then there exists a unique function $Q_s \in L_s(\R)$ such that any solution $u \in L_s(\R)$ of \eqref{eq:Gelf2} is given by 
$$
u(x) = Q_s(x)
$$ 
up to translation and scaling as in \eqref{eq:symmetry}.  
\end{thm}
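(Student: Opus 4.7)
By Theorem \ref{thm:reg_sym}, every solution $u \in L_s(\R)$ of \eqref{eq:Gelf2} is smooth, even about some point $x_0 \in \R$, and strictly monotone-decreasing in $|x - x_0|$. After a translation I may take $x_0 = 0$, and the scaling in \eqref{eq:symmetry} shifts $u(0)$ by $2s \log \mu$, so it suffices to prove the normalized statement: \emph{if $u_1, u_2$ are even, smooth, monotone-decreasing solutions of \eqref{eq:Gelf2} with $u_1(0) = u_2(0)$, then $u_1 \equiv u_2$.}

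I would attack this in the $v$-picture alluded to in the abstract. Setting $v := \sqrt{\eu^u}$, the function $v$ is positive, even, strictly decreasing, lies in $L^2(\R)$ (since $\int_\R v^2 = \int_\R \eu^u < +\infty$), and satisfies $v(0) = \eu^{u(0)/2}$; equation \eqref{eq:Gelf2} becomes $\Ds(2 \log v) = v^2$. Since $s \in (\tfrac{1}{2},1)$, the fundamental solution of $\Ds$ on $\R$ is, up to sign, the renormalized Riesz kernel proportional to $|x|^{2s-1}$, so under the normalization $u(0) = 0$ the PDE can be recast as the nonlinear integral equation
\[
u(x) = - c_s \int_\R \bigl( |x-y|^{2s-1} - |y|^{2s-1} \bigr) \eu^{u(y)} \, dy,
\]
the telescoping kernel ensuring absolute convergence thanks to $\eu^u \in L^1(\R)$. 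Equivalently, $v$ is a fixed point of a (locally) compact nonlinear map of the schematic form $v \mapsto \exp \bigl( \tfrac{1}{2} \FF[v^2] \bigr)$, where $\FF$ denotes this renormalized Riesz potential. This is the fixed-point scheme already at work in the proof of Theorem \ref{thm:existence}.

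To establish uniqueness of the fixed point I would implement the nonlocal shooting strategy announced in the abstract. Concretely, I would consider a one-parameter family of candidate profiles indexed, say, by $v(0)$, interpret the admissible ones (those recovering a genuine solution) as zeros of a nonlinear functional built from the integral equation, and combine the (local) compactness of the above map with nondegeneracy of the linearized operator $L_u := \Ds - \eu^u$ on even functions. The nondegeneracy claim is that the even part of $\ker L_u$ is spanned by the single scaling generator $x u'(x) + 2 s$, the only even direction among the symmetries \eqref{eq:symmetry}. Granting this, the implicit function theorem applied to the fixed-point map after quotienting out scaling forces the set of normalized solutions to be isolated; a continuation argument anchored at the solution produced in Theorem \ref{thm:existence}, together with compactness, then collapses the whole solution set into a single scaling orbit.

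The main obstacle will be the nondegeneracy of $L_u$ on even functions. Because Theorem \ref{thm:reg_sym} only yields $u(x) = O(|x|^{2s-1})$, the solution is not in $H^s(\R)$ and $L_u$ must be analyzed on suitably weighted spaces; the classical Pohozaev or Hardy-type identities available in the local case $s = 1$ (where a closed-form family of solutions is known) do not apply verbatim. I expect to circumvent this by working at the level of $v = \sqrt{\eu^u} \in L^2(\R)$, whose linearization is a Schr\"odinger-type operator with the fast-decaying potential $\eu^u$: its discrete spectrum is amenable to compact-perturbation analysis, and the ODE-like shooting information the paper extracts for $v$ pins down the dimension of the relevant even kernel.
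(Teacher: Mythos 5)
Your reduction to even, normalized solutions and the reformulation via $v=\sqrt{e^u}$ as a fixed point of a locally compact map is exactly the paper's starting point, but the two steps that carry the actual proof are missing. First, the nondegeneracy: you assert that the even kernel of $\LL_u=\Ds-e^u$ is spanned by $x\pt_x u+2s$ and say you ``expect'' to obtain this by compact-perturbation/shooting considerations, but no mechanism is given, and generic compactness arguments cannot exclude that $1$ is an eigenvalue of the linearized fixed-point map. In the paper this is the heart of the matter (Lemma \ref{lem:nondeg}): for a kernel element $f$ of $\id-D_v\Ts_\lambda^{(\sigma)}[v]$ one shows $I[f]=\int_0^\infty (vf)\,H_\alpha(vf)\,dx\geq 0$ by rewriting the relevant quadratic form through the one-sided Laplace transform (using the kernel identity for $(x+y)^{2\alpha-1}$), and simultaneously $I[f]\leq 0$ by an integration by parts exploiting that $v=v^*$ is symmetric-decreasing; together these force $f\equiv 0$. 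Note also that in the fixed-point formulation with $v(0)$ prescribed the kernel is actually trivial (the scaling direction changes $\lambda$), so the statement you would need is not literally the even part of Theorem \ref{thm:nondeg}; in the paper the even part of Theorem \ref{thm:nondeg} is itself deduced from this fixed-point nondegeneracy, so your plan defers the key estimate circularly.

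Second, the global step fails as stated: local isolatedness of normalized solutions (via the implicit function theorem) plus compactness only yields that the normalized solution set is discrete/finite, not a single orbit. A continuation ``anchored at the solution produced in Theorem \ref{thm:existence}'' has no parameter regime in which uniqueness is known a priori; for $K\equiv 1$ a continuation in the shooting parameter $\lambda=v(0)$ is vacuous, since scaling maps the solution set for one $\lambda$ bijectively onto that for any other. The paper's way around this is the auxiliary Gaussian-damped family $\Ts^{(\sigma)}_\lambda[v]=e^{-\frac12\sigma^2x^2}\Ts_\lambda[v]$: for $\sigma\neq 0$ the map is globally compact, a contraction for small $\lambda$ (Lemma \ref{lem:unique_small}) gives an unconditional uniqueness anchor, the nondegeneracy lemma allows continuation in $\lambda$ up to any $\lambda_*$, and then a second continuation in $\sigma$ (with $\sigma$-uniform a-priori bounds obtained from a Pohozaev identity combined with the reverse Hardy--Littlewood--Sobolev inequality) transfers uniqueness to the limiting case $\sigma=0$, which is the Gelfand equation. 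Without this deformation, or some substitute such as a degree-counting argument, your plan does not close.
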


\begin{remarks*}
1) By Theorem \ref{thm:reg_sym}, the function $Q_s$ is smooth, even, and strictly monotone-decreasing in $|x|$.

2) For the limiting case $s=\frac{1}{2}$, it is a well-known result  that any solution $u \in L_{1/2}(\R)$ of the one-dimensional fractional Liouville equation 
\be \tag{Liouville} \label{eq:liouville}
(-\Delta)^{\frac 1 2} u = e^u \quad \mbox{with} \quad \int_{\R} e^u \, dx < +\infty
\ee
is unique up to translation and scaling; see, e.\,g.~\cite{LiZh-95,Xu-05,DaMa-17,GeLe-24}. In fact,  we have
$$
u(x) = \log \left ( \frac{2 \mu}{1+ \mu^2 (x+y)^2} \right ) \quad \mbox{with $y \in \R$ and $\mu > 0$}.
$$
However, the case $s=\frac{1}{2}$ features conformal invariance, which is absent for \eqref{eq:Gelf2} for $s > \frac{1}{2}$. Thus our approach of proving uniqueness for $s \in (\frac{1}{2},1)$ will be based on entirely different arguments. In fact, the proof can also be adapted (with some effort) to the case $s=\frac{1}{2}$, thus providing yet another uniqueness proof in this case; see \cite{La-25} for details.

For $s=\frac{1}{2}$, we also have a striking {\em quantization phenomenon} (linked to the conformal invariance) meaning that 
$$
\int_{\R} e^u \,dx = 2 \pi
$$ 
for all solutions of \eqref{eq:liouville}. By contrast, it is easy to see that the scaling symmetry in \eqref{eq:symmetry} allows for any value $\int_{\R} e^{u} \, dx \in (0, +\infty)$ for solutions $u$ of \eqref{eq:Gelf2} in the case $s > \frac 1 2$.

3) We also mention that the limiting case $s=1$ in \eqref{eq:Gelf2} yields a nonlinear ordinary differential equation, whose solutions can be found explicitly \cite{Te-06}.
\end{remarks*}

\subsection{Finite Morse Index and Nondegeneracy for $K(x) \equiv 1$}

To complete our main results about \eqref{eq:Gelf2}, we study the Morse index of solutions as well as the nondegeneracy of the associated linearized operator $\LL_u$, which is formally given by
\be
\LL_u = \Ds - e^u .
\ee
We recall that a solution $u \in L_s(\R)$ of $\Ds u = e^u$ with the (weaker) assumption $e^u \in \Lloc^1(\R)$ is said to {\em stable outside a compact set $\KK \subset \R$} provided that
\be
\langle \phi, \LL_u \phi \rangle = \int_{\R} \phi \Ds \phi \, dx - \int_\R e^u \phi^2 \, dx \geq 0 \quad \mbox{for all $\phi \in C^\infty_c(\R \setminus \KK)$}.
\ee
That is, the quadratic form of $\LL_u$ on $C^\infty_c(\R \setminus \KK)$ is positive semi-definite. Furthermore, we recall that $u$ is {\em stable} if we can take $\KK=\emptyset$ above, i.\,e., we have that $\LL_u \geq 0$ holds in the sense of quadratic forms on $C^\infty_c(\R)$. We remark that $u$ is a stable solution if and only if its {\em Morse index}\footnote{The Morse index $n_-(\LL_u) \in \N_0 \cup \{ +\infty \}$ is the maximal dimension of the linear subspaces $V \subset C^\infty_c(\R)$ such that $\langle \phi, \LL_u \phi \rangle < 0$ for all $\phi \in V \setminus \{ 0 \}$. Clearly, we see that $u$ is stable if and only if $n_-(\LL_u) = 0$.} $n_-(\LL)$ is equal to zero. 

We recall the following non-existence result from \cite{DuNg-22}, which is inspired by the work \cite{Fa-07} on non-existence of stable solutions to the Liouville equation $-\Delta u = e^u$ in $\R^2$. 

\begin{thm}[\cite{DuNg-22}] \label{thm:non_existence}
For all $s \in (\frac{1}{2},1)$, there exists no stable solution $u \in L_s(\R)$ of $\Ds u = e^u$ with $e^u \in \Lloc^1(\R)$. 
\end{thm}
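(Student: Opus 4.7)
My plan is to follow Farina's classical strategy for the non-existence of stable solutions of $-\Delta u = e^u$ in low dimensions, adapted to the one-dimensional fractional setting. Since $\Ds$ satisfies no pointwise chain rule, plugging the canonical test function $\phi = \eta e^{\alpha u}$ directly into the quadratic form of $\LL_u$ is awkward. The cleanest workaround is via the Caffarelli--Silvestre extension: lift $u$ to a function $\tilde u$ on the upper half-plane $\R^2_+$ satisfying $\operatorname{div}(y^{1-2s}\nabla \tilde u)=0$ with Neumann-type boundary datum $-c_s \lim_{y\to 0^+} y^{1-2s}\partial_y \tilde u = e^u$, and rewrite the stability condition as
\[
c_s \int_{\R^2_+} y^{1-2s} |\nabla \tilde \phi|^2 \, dx\, dy \geq \int_{\R} e^u \phi^2\, dx
\]
for every admissible extension $\tilde\phi$ with trace $\phi \in C^\infty_c(\R)$.

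The core argument then has two steps. First, insert the test function $\tilde \phi = \eta e^{\alpha \tilde u}$, with $\eta$ a cutoff on $\R^2_+$ and $\alpha>0$ a free parameter, into the extended stability inequality (using a standard truncation of $u$ to bypass the non-compactness of the exponential). Second, multiply the extended equation by $\eta^2 e^{2\alpha \tilde u}$ and integrate by parts, exploiting the boundary condition. Subtracting a suitable multiple of the resulting identity from the stability inequality cancels the singular weighted gradient term $\int y^{1-2s} e^{2\alpha \tilde u}|\nabla \tilde u|^2$ provided $\alpha$ lies in an admissible interval, and yields the higher-integrability bound
\[
c(\alpha) \int_{\R} \eta(\cdot,0)^2 e^{(1+2\alpha) u}\, dx \leq C \int_{\R^2_+} y^{1-2s} e^{2\alpha \tilde u}|\nabla \eta|^2\, dx\, dy,
\]
with $c(\alpha)>0$.

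To conclude, one takes $\eta=\eta_R$ a standard cutoff at scale $R$ and runs a Moser-type bootstrap along an admissible sequence of exponents. The critical scaling available in ambient dimension one for $s \in (\frac{1}{2},1)$ forces the right-hand side to vanish as $R\to\infty$, giving $e^u \equiv 0$ on $\R$, which is absurd. The main technical obstacle is verifying the positivity of $c(\alpha)$ for a nonempty range of $\alpha$: this is a purely algebraic sign condition whose validity is sensitive to $s$ and encodes the precise threshold $s > \frac{1}{2}$, playing in dimension one the role of Farina's dimensional threshold for the classical Gelfand equation. Making the extension-based manipulation fully rigorous also demands a careful truncation of $\tilde u$ and tight control of the weighted boundary terms as $y \to 0^+$ in order to justify the chain rule and integration by parts used in both test-function insertions.
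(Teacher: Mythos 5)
Your route is viable in principle, but it is genuinely different from what the paper does, and it quietly shoulders a much larger burden than the statement requires. The paper's proof of Theorem \ref{thm:non_existence} is a two-line reduction plus a citation: a stable solution with merely $e^u \in \Lloc^1(\R)$ is in particular stable outside a compact set, so Proposition \ref{prop:morse} (the cutoff/fractional Leibniz argument) upgrades to $e^u \in L^1(\R)$, Theorem \ref{thm:reg_sym} then gives smoothness, and at that point the non-existence of stable $C^{2s+\gamma}\cap L_s$ solutions for $N<10s$ is simply quoted from \cite{DuNg-22}. Your proposal instead re-derives the quoted result from scratch via the Caffarelli--Silvestre extension and a Farina-type insertion of $\eta\,e^{\alpha \tilde u}$ -- which is essentially the strategy behind the cited theorem, so as a proof of the underlying non-existence it is a reasonable plan; what it buys is self-containedness, at the price of redoing the heavy quantitative argument that the paper deliberately outsources (note also that elsewhere the paper explicitly avoids the $s$-harmonic extension).

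Two concrete weak points. First, the part of the statement that the paper actually proves -- passing from the weak hypothesis $e^u \in \Lloc^1(\R)$ and stability to enough regularity and integrability to justify any test-function manipulation at all -- is precisely what your sketch defers to ``standard truncation.'' With only $u\in L_s(\R)$ and $e^u\in\Lloc^1$, the solution need not a priori be smooth, the stability inequality is only assumed for $\phi\in C^\infty_c(\R)$, and inserting (truncations of) $e^{\alpha u}$ and integrating by parts against the extended equation requires exactly the kind of upgrade that Proposition \ref{prop:morse} and Theorem \ref{thm:reg_sym} provide; without an explicit step of this type your ``purely algebraic'' core does not get off the ground. Second, your claim that positivity of $c(\alpha)$ ``encodes the precise threshold $s>\frac12$'' is not right: in ambient dimension $N=1$ the Farina scaling yields non-existence whenever $N<2s(1+2\alpha)$ for some admissible $\alpha$, i.e.\ for all $s>\frac{1}{10}$ (this is the $N<10s$ condition of \cite{DuNg-22}); the restriction $s\in(\frac12,1)$ in the theorem is the paper's standing framework (needed for $L_s$, the integral representation, and the finiteness condition), not the sharp outcome of the sign condition on $c(\alpha)$.
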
 

\begin{proof}
Let $s \in (\frac 1 2, 1)$. Suppose $u \in L_s(\R)$ solves $\Ds u = e^u$ with $e^u \in \Lloc^1(\R)$ and assume that $u$ stable. By Proposition \ref{prop:morse} below, we conclude that $e^u \in L^1(\R)$ holds and thus $u$ is smooth by Theorem \ref{thm:reg_sym}. From \cite{DuNg-22} we recall that there is no stable solution $u \in C^{2s+\gamma}(\R^N) \cap L_s(\R^N)$ with $\gamma > 0$ of $(-\Delta)^s u = e^u$ in $\R^N$ if $0 < s < 1$ and $N < 10s$.
\end{proof}

The following results shows in particular that all solutions $u \in L_s(\R)$ of \eqref{eq:Gelf2} obtained by Theorem \ref{thm:existence} above have finite Morse index and are stable outside some compact non-empty set $\KK \subset \R$. 

\begin{prop} \label{prop:morse}
Let $s \in (\frac 1 2,1)$ be given. Then any solution $u \in L_s(\R)$ of \eqref{eq:Gelf2} has finite Morse index greater or equal than one, i.\,e.,
$$
1 \leq n_-(\LL_u)  < +\infty.
$$
As a consequence, the solution $u$ is stable outside some compact set $\KK \subset \R$ with $\KK \neq \emptyset$.

Conversely, if $u \in L_s(\R)$ is a solution of $\Ds u = e^u$ with $e^u \in \Lloc^1(\R)$ that is stable outside some compact set $\KK \subset \R$, then $e^u \in L^1(\R)$ holds and all conclusions of Theorems \ref{thm:reg_sym} and \ref{thm:unique} hold. In particular, $u$ has finite Morse index. 
\end{prop}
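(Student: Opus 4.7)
The plan is to address the three assertions in turn: (a) the lower bound $n_-(\LL_u)\ge 1$, (b) the upper bound $n_-(\LL_u)<\infty$, and (c) the converse implication that stability outside a compact set forces $e^u\in L^1(\R)$. Then (a) together with (b) yields the first half of the proposition, while (c) combined with Theorems \ref{thm:reg_sym}, \ref{thm:unique} and parts (a)--(b) gives the converse half.

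For (a), I would test the quadratic form of $\LL_u$ against the dilated cutoff $\phi_R(x)=\phi(x/R)$, where $\phi\in C^\infty_c(\R)$ satisfies $\phi\equiv 1$ on $[-1,1]$. Dominated convergence gives $\int_\R e^u\phi_R^2\,dx\to\int_\R e^u\,dx>0$ as $R\to\infty$. The scaling identity $\int_\R\phi_R\Ds\phi_R\,dx=R^{1-2s}\int_\R\phi\Ds\phi\,dx$ together with $2s>1$ forces $\int_\R\phi_R\Ds\phi_R\,dx\to 0$. Thus $\langle\phi_R,\LL_u\phi_R\rangle<0$ for $R$ large, and so $n_-(\LL_u)\ge 1$, which in particular forces the stability set $\KK$ to be non-empty. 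For (b), Theorem \ref{thm:reg_sym} provides $V:=e^u\in L^1(\R)\cap L^\infty(\R)$ with $V(x)\to 0$ as $|x|\to\infty$ (using smoothness and integrability). A fractional IMS-type localization with a partition $1=\chi_0^2+\chi_1^2$, where $\chi_0$ is compactly supported and $\chi_1\equiv 1$ outside a large ball, then reduces the problem to a compactly supported Schr\"odinger operator with finitely many negative eigenvalues by Rellich--Kondrachov compactness, while the outer piece contributes a non-negative form (since $V$ is uniformly small there) up to a localization commutator error that can be absorbed thanks to $s>1/2$. Controlling this fractional commutator carefully is the main technical obstacle in step (b).

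For (c), I would construct test functions $\phi_R\in C^\infty_c(\R\setminus\KK)$ approaching $1$ away from $\KK$ with uniformly bounded Gagliardo seminorm. Pick $\rho\in C^\infty_c(\R)$ with $\rho\equiv 1$ on $\KK$ and $\eta\in C^\infty_c(\R)$ with $\eta\equiv 1$ on $[-1,1]$, and set $\phi_R(x)=(1-\rho(x))\,\eta(x/R)$. The elementary bound
\[
(\phi_R(x)-\phi_R(y))^2\;\le\;2(\rho(x)-\rho(y))^2+2(\eta(x/R)-\eta(y/R))^2,
\]
combined with $\int\eta(\cdot/R)\Ds\eta(\cdot/R)\,dx=R^{1-2s}\int\eta\Ds\eta\,dx$ and $s>1/2$, gives $\int\phi_R\Ds\phi_R\,dx\le C$ uniformly in $R$. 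The stability assumption then yields $\int_\R e^u\phi_R^2\,dx\le C$, and Fatou's lemma produces $\int_\R e^u(1-\rho)^2\,dx<\infty$, hence $e^u\in L^1(\R\setminus\mathrm{supp}\,\rho)$. Standard fractional elliptic regularity applied to $\Ds u=e^u\in\Lloc^1(\R)$ gives $u\in L^\infty_{\mathrm{loc}}(\R)$, which upgrades this estimate to $e^u\in L^1(\R)$. The remaining conclusions then follow by appeal to Theorems \ref{thm:reg_sym}, \ref{thm:unique} and parts (a)--(b).
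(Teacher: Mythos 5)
Your steps (a) and (c) are correct. Step (a) is in fact a more self-contained route than the paper's: the paper simply quotes the non-existence of stable solutions (Theorem \ref{thm:non_existence}, i.e.\ \cite{DuNg-22}) to get $n_-(\LL_u)\ge 1$, whereas your scaling computation with $\phi_R=\phi(\cdot/R)$, $\int\phi_R\Ds\phi_R\,dx=R^{1-2s}\int\phi\Ds\phi\,dx\to 0$ and $\int e^u\phi_R^2\,dx\to\int e^u\,dx>0$, proves it directly. Step (c) is essentially the paper's own argument: there one takes $\psi_k=\chi\,\phi(\cdot/k)$ with $\chi$ vanishing near $\KK$, uses the fractional Leibniz bound of Lemma \ref{lem:leibniz} together with $[\phi(\cdot/k)]_{\dot H^s}=k^{\frac12-s}[\phi]_{\dot H^s}$, and concludes by monotone convergence; your product cutoff and elementary difference bound play exactly this role. (Your final appeal to elliptic regularity is unnecessary, since $e^u\in \Lloc^1(\R)$ is part of the hypothesis and $\mathrm{supp}\,\rho$ is compact; also take $\rho\equiv 1$ on a neighborhood of $\KK$ so that $\phi_R\in C^\infty_c(\R\setminus\KK)$.)

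The genuine gap is in step (b). Writing $V=e^u$, uniform smallness of $V$ outside a large ball does not make the exterior form nonnegative: $\Ds$ restricted to functions supported in $\{|x|\ge R\}$ has no spectral gap (the infimum of its form is still $0$), so ``$V$ uniformly small there'' cannot yield $\Ds-V\ge 0$ on the outer region; moreover the IMS localization error for $\Ds$ is a nonnegative term of size $\sim R^{-2s}$, concentrated near the transition region, which is \emph{subtracted} and must itself be absorbed by a zero-energy estimate. Even if the commutator were controlled, this scheme only shows that $\LL_u$ has finitely many negative directions below any fixed level $-\eps<0$, i.e.\ discreteness of the negative spectrum; it does not exclude infinitely many negative eigenvalues accumulating at $0$, so it does not give $n_-(\LL_u)<+\infty$. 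That finiteness is a quantitative statement about the \emph{decay rate} of $V$, not its smallness: for instance $V(x)=c\langle x\rangle^{-2s}$ tends to zero yet produces an infinite Morse index for large $c$ (test on dyadically separated bumps, where both terms scale like $2^{j(1-2s)}$ and the nonlocal cross terms are geometrically small). The paper closes precisely this point by combining the decay $e^{u(x)}\le C e^{-a|x|^{2s-1}}$ (Lemma \ref{lem:decay} and Theorem \ref{thm:reg_sym}) with the Bargmann-type bound of \cite{BrFaGr-24}, $N_{<0}(\Ds-V)\le C_s\big(\||x|^{2s-1}V\|_{L^1}+1\big)$, and the min--max comparison $n_-(\LL_u)\le N_{<0}(\Ds-V)$; any repair of your step (b) needs such a weighted-$L^1$ (Birman--Schwinger/Bargmann) ingredient rather than an IMS partition. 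Finally, the clause ``stable outside some compact set'' also deserves a word: the paper deduces it from $n_-(\LL_u)<\infty$ by taking $\KK$ to be the union of the supports of a maximal negative family of test functions, a step your outline leaves implicit.
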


We complete the discussion of \eqref{eq:Gelf2} by showing that the linearized operator $\LL_u$ is {\em nondegenerate} in the sense that all solutions of $\LL_u \psi = 0$ are generated by the two symmetries displayed in \eqref{eq:symmetry}. To this end, we introduce the linear subspace
$$
W_s := \{ \psi \in L_s(\R) \mid \mbox{$\psi(x) = O(|x|^{2s-1})$ as $|x| \to +\infty$} \}.
$$
for $s \in (\frac{1}{2},1)$. We remark that if $u \in L_s(\R)$ solves \eqref{eq:Gelf2}, then we obtain the pointwise bound $e^{u(x)} \leq C e^{-a |x|^{2 \alpha}}$ with some constants $a > 0$ and $C > 0$. Hence it follows that $\int_{\R} e^{u(x) + \eps \psi(x)} \,dx < +\infty$ holds for $\psi \in W_s$, provided that $\eps > 0$ is sufficiently small. Thus the space $W_s$ contains admissible perturbations of \eqref{eq:Gelf2} which are consistent with the condition that $e^u \in L^1(\R)$ holds. 

We have the following result, which states that the kernel of $\LL_u$ acting on $W_s$ is entirely spanned by elements that arise from the scaling and translational symmetry of \eqref{eq:Gelf2}. That is, given a solution $u$, we consider the two-parameter family of solutions \eqref{eq:Gelf2} with
$$
u_{(y, \mu)}(x) = u \left (\mu(x+y) \right ) + 2 s \log \mu \quad \mbox{with $(y,\mu) \in \R \times \R_{>0}$},
$$
which is in accordance with \eqref{eq:symmetry}. By differentiation with respect to $y$ and $\mu$, we obtain
$$
\LL_u T_u = \LL_u R_u = 0,
$$
with the functions
$$
T_u := \pt_y u_{(0,1)}  = \pt_x u \quad \mbox{and} \quad  R_u := \pt_\mu u_{(0,1)}  = x \pt_x u + 2s.
$$
Also, we verify that $T_u(x) = o(1)$ and $R_u(x) = O(|x|^{2s-1})$ as $|x| \to \infty$, which shows that $T_u$ and $R_u$ indeed belong to  $W_s$. We now have the following nondegeneracy result.

\begin{thm}[Nondegeneracy] \label{thm:nondeg}
Let $s \in (\frac 1 2,1)$ and suppose that $u \in L_s(\R)$ solves \eqref{eq:Gelf2}. Then the corresponding linearized operator $\LL_u$ acting on $W_s$ is nondegenerate, i.\,e., 
$$
\mathrm{ker} \, \LL_u = \mathrm{span} \left \{ T_u, R_u \right \}.
$$ 
\end{thm}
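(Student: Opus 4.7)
By Theorem \ref{thm:reg_sym} and translation invariance we may assume $u$ is even about the origin and strictly monotone-decreasing in $|x|$. Because $e^u$ is even, the linearized operator $\LL_u$ commutes with the reflection $x\mapsto -x$, hence its kernel on $W_s$ splits as a direct sum of its even and odd parts. Since $T_u$ is odd and $R_u$ is even, the theorem reduces to: \emph{(i)} every odd $\psi\in\ker\LL_u\cap W_s$ is a scalar multiple of $T_u$; and \emph{(ii)} every even $\psi\in\ker\LL_u\cap W_s$ is a scalar multiple of $R_u$.

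\emph{Odd sector.} By the strict monotonicity of $u$, one has $T_u(x)<0$ for all $x>0$. I would represent $\LL_u$ via the Caffarelli--Silvestre extension to the half-plane $\R\times\R_+$ with the weight $y^{1-2s}$ and restrict to extensions odd in $x$, which satisfy a Dirichlet condition on $\{x=0\}$. The key step is a strong maximum principle plus Hopf lemma on the quarter-plane $\{x,y>0\}$ for the associated degenerate-elliptic operator with the positive-coefficient Robin-type boundary condition on $\{y=0\}$: this yields that the extension $\mathcal T$ of $T_u$ is strictly negative throughout the quarter-plane. For a second odd kernel element $\psi_o\in W_s$, the ratio $w=\Psi_o/\mathcal T$ of the extensions is then well-defined on the quarter-plane, satisfies a linear equation in divergence form without zeroth-order terms (the Robin coefficients cancel on the boundary), and by a Liouville-type argument combined with the $W_s$-decay of $\psi_o, T_u$ at infinity must be constant. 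Unfolding back yields $\psi_o=cT_u$.

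\emph{Even sector.} Since $R_u$ changes sign ($R_u(0)=2s>0$ while $R_u(x)\to-\infty$ as $|x|\to\infty$), no Perron-type argument is available. My plan is to linearize the nonlocal shooting scheme developed in the proof of Theorem \ref{thm:unique}. That scheme parametrises the solutions of \eqref{eq:Gelf2} locally near $u$ by the two parameters $(y,\mu)$ from \eqref{eq:symmetry}, giving a smooth two-dimensional solution manifold whose tangent space at $u$ is $\mathrm{span}\{T_u,R_u\}$. At the linearized level one then identifies the full kernel of $\LL_u$ on $W_s$ with this tangent space, which I would carry out in two steps: first, derive a sharp two-parameter asymptotic expansion at infinity for any $\psi\in W_s$ with $\LL_u\psi=0$, exploiting that $e^{u(x)}\leq Ce^{-a|x|^{2s-1}}$ decays faster than any power while $\psi\in L_s(\R)$ grows at most like $|x|^{2s-1}$; and second, apply Fredholm theory for the linearized shooting map on a suitable weighted space to conclude that the kernel has dimension at most two, hence equals $\mathrm{span}\{T_u,R_u\}$.

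\textbf{Main obstacle.} The difficulty concentrates in the even sector. The sign change of $R_u$ excludes any maximum principle strategy, so a careful asymptotic analysis at infinity in the weighted class $L_s(\R)$ is required, together with a rigidity statement that matches each admissible asymptotic profile to precisely one of the two symmetry-generated modes. Setting up the linearized shooting framework and establishing its Fredholm property---while controlling the nonlocal tail of $\Ds\psi$ through the condition $\psi\in W_s$---is expected to be the technical heart of the proof.
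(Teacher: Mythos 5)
Your even--odd splitting matches the paper, and your odd-sector claim ($T_u=\pt_x u<0$ for $x>0$, hence a positivity argument pins down the odd kernel) is the right idea. But the even sector, which you yourself flag as the ``main obstacle'', contains a genuine gap rather than a deferred technicality. The existence of the two-parameter family \eqref{eq:symmetry} only gives the inclusion $\mathrm{span}\{T_u,R_u\}\subseteq\ker\LL_u$, and Fredholm theory for a linearized shooting map only gives finite-dimensionality of the kernel; neither ingredient bounds the kernel dimension by two, and that bound is precisely the assertion of nondegeneracy, so your plan in effect assumes what is to be proved. The paper closes this gap with a concrete positivity/monotonicity mechanism: an even kernel element $\psi_e\in W_s$ is first shown (via Proposition \ref{prop:integral_form}, where the growth restriction $\psi_e=O(|x|^{2s-1})$ forces $B=0$, and Proposition \ref{prop:riesz_integral}) to satisfy $\psi_e(x)=-\int_0^x H_\alpha(v^2\psi_e)(y)\,dy+\psi_e(0)$ with $v=\sqrt{e^u}$; since $R_u(0)=2s\neq0$ one subtracts $\gamma R_u$ so that $h:=\psi_e-\gamma R_u$ vanishes at $0$, and then $f:=vh\in X_\alpha$ solves $f(x)=-v(x)\int_0^x H_\alpha(vf)(y)\,dy$. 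The quantity $I[f]=\int_0^\infty (vf)\,H_\alpha(vf)\,dx$ is nonnegative because it equals a positive constant times $\int_0^\infty t^{2\alpha}|\mathsf{L}(vf)(t)|^2\,dt$ (Laplace transform), and nonpositive by an integration by parts using $\pt_x(v^2)\le 0$ for $x>0$ ($v$ symmetric-decreasing); hence $f\equiv0$ and $\psi_e=\gamma R_u$ (this is exactly Lemma \ref{lem:nondeg}, which also drives the uniqueness proof). Your proposal offers no substitute for this step, so as written it does not prove the theorem.

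In the odd sector your route is genuinely different from the paper's and, with work, could plausibly be completed, but it is also only a sketch. The paper deliberately avoids the $s$-harmonic extension: it multiplies the odd kernel element by $v$, obtains $g=v\psi_o$ with $Ag=g$ for the compact self-adjoint operator on $L^2(\R_+)$ with strictly positive kernel $a(x,y)=c_\alpha v(x)\left(|x+y|^{2\alpha}-|x-y|^{2\alpha}\right)v(y)$, observes that $h=-v\,\pt_x u>0$ is a fixed eigenfunction, and invokes the Perron--Frobenius Lemma \ref{lem:perron} to conclude that the eigenvalue $1$ is maximal and simple, whence $\psi_o\in\mathrm{span}\{T_u\}$. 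Your extension-based alternative (quarter-plane Hopf lemma plus a ratio Liouville argument) relies on a ``$W_s$-decay'' of $\psi_o$ that is not automatic --- elements of $W_s$ may grow like $|x|^{2s-1}$, and the actual decay of odd kernel elements has to be derived first, e.g.\ from the integral representation and the oddness cancellation as in the paper --- and the degenerate-elliptic maximum principle and Liouville steps would need detailed justification. So the comparison is: the paper's argument is more elementary (integral operators, Laplace transform, Perron--Frobenius), while your plan substitutes heavier extension machinery in the odd sector and leaves the even sector essentially open.
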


\begin{remark*}
We mention the interesting fact that we only know that $n_-(\LL_u)$ is finite and we do not make use of an extra assumption such as $n_-(\LL_u) =1$. The proof of Theorem \ref{thm:nondeg} will involve a nondegeneracy result, which we derive to prove Theorem \ref{thm:unique} combined with an argument that resembles the classical {\em Birman--Schwinger trick} that occurs in the spectral study of Schr\"odinger type operators $H=(-\DD)^s  -V$. Furthermore, we also make use of some Perron--Frobenius type arguments. See below for more details on this.
\end{remark*}

\subsection{Results for Nonconstant $K(x) >0$}

For the proofs of Theorems \ref{thm:existence} and \ref{thm:unique}, we will actually need to consider the fractional Gelfand equation \eqref{eq:Gelf} for a more general class of functions $K$.  

\begin{assum}
We suppose that $K=K(x)>0$ is a strictly positive function on $\R$, which is even and monotone-decreasing in $|x|$. Moreover, we assume that $K$ is $C^1$ with $\pt_x \sqrt{K} \in L^\infty(\R)$. 
\end{assum}

\begin{remarks*}
1) Since $K$ is monotone-decreasing and positive, the assumption above implies that $K \in L^\infty(\R)$. Clearly, the following examples are admissible choices:
$$
K(x) = \mbox{const.} > 0, \quad K(x) = (1+x^2)^{-\alpha} \ \ (\alpha > 0), \quad K(x) = e^{-\beta |x|^{2m}} \ \ (\mbox{$\beta >0, m > \frac{1}{2}$}).
$$

2) The assumption $\pt_x \sqrt{K} \in L^\infty(\R)$ turns out to be convenient in our approach, but it could be relaxed at the expense of more technicalities (which we seek to avoid here). Also, we could relax the assumption that $K$ is $C^1$. Again, we refrain from doing so  in order to keep the discussion focused on the main ideas developed below.  
\end{remarks*}
 
We now summarize the following main results for \eqref{eq:Gelf}, where we exclude the case $K(x) \equiv \mbox{const}.$, which has already been  discussed above.
 
\begin{thm} \label{thm:big}
Let $s \in (\frac 1 2, 1)$ and suppose $K$ satisfies Assumption {\em \textbf{(A)}} with $K(x) \not \equiv \mathrm{const}$. Then the following statements hold true.

\begin{itemize}
\item[(i)] \textbf{Existence:} There exists an even solution $u \in L_s(\R)$ of \eqref{eq:Gelf}.
\item[(ii)] \textbf{Regularity:} Any solution $u \in L_s(\R)$ of \eqref{eq:Gelf} is in $\Cloc^{2s+\gamma}(\R)$ with some $\gamma >0$. 
\item[(iii)] \textbf{Symmetry:} Any solution $u \in L_s(\R)$ of \eqref{eq:Gelf} with $u(x) = O(|x|^{2s-1})$ as $|x| \to \infty$ is even and monotone-decreasing in $|x|$. 
\item[(iv)]  \textbf{Uniqueness:}  If $u, \tilde{u} \in L_s(\R)$ are even solutions of \eqref{eq:Gelf}, then the following implication holds:
$$
u(0) = \tilde{u}(0) \quad \Rightarrow \quad u(x) = \tilde{u}(x) \quad \mbox{for all $x \in \R$}.
$$
\item[(v)] \textbf{Finite Morse Index:} Every even solution $u \in L_s(\R)$ of \eqref{eq:Gelf} has finite Morse index.
\end{itemize}
\end{thm}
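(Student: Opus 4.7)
The plan follows the strategy flagged in the abstract: a fixed-point scheme in $v = e^{u/2}$ for existence, moving planes for symmetry, a nonlocal shooting argument for uniqueness, and a relative compactness argument for finite Morse index. The common starting point is the integral representation of solutions. Since $s \in (\tfrac12,1)$, the operator $\Ds$ on $\R$ is inverted modulo the affine functions by the Riesz kernel $c_s|x-y|^{2s-1}$. For an even solution $u$ of \eqref{eq:Gelf} with $Ke^u \in L^1(\R)$, this yields, after normalizing at the origin,
\begin{equation*}
u(x) - u(0) \;=\; -c_s \int_{\R} \bigl(|x-y|^{2s-1} - |y|^{2s-1}\bigr) K(y)\, e^{u(y)}\, dy.
\end{equation*}
Substituting $v = e^{u/2}$ recasts this as a nonlinear integral equation on the convex cone $\mathcal{C}$ of positive, even, monotone-decreasing, suitably decaying functions.

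For existence (i), I would verify that the associated solution operator $\mathcal{T}\colon v \mapsto e^{u/2}$ is continuous and locally compact on $\mathcal{C}$, using the smoothing properties of the Riesz kernel together with Assumption (A) (in particular $\partial_x \sqrt{K} \in L^\infty$), and then invoke Schauder's fixed-point theorem on an invariant subset. Regularity (ii) is a standard bootstrap: starting from $u \in L_s(\R)$, the local boundedness of $K e^u$ combined with Hölder/Schauder estimates for $\Ds$ upgrades $u$ to $\Cloc^{2s+\gamma}(\R)$. Symmetry (iii) is proved by the moving planes method for the fractional Laplacian, where the growth $u = O(|x|^{2s-1})$ and the strict monotonicity of $K$ in $|x|$ allow the reflection hyperplane to be slid all the way to $x=0$. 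For finite Morse index (v), the integral representation implies a decay bound $e^{u(x)} \le C e^{-a|x|^{2s-1}}$, so that $Ke^u$ is a relatively compact perturbation of $\Ds$ on $L^2(\R)$; hence $\LL_u = \Ds - Ke^u$ has only finitely many negative eigenvalues.

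The main obstacle is uniqueness (iv). Given two even solutions $u$ and $\tilde u$ with $u(0) = \tilde u(0)$, set $w = u - \tilde u$. Then $w$ satisfies the linear equation $\Ds w = K V w$ with $V := \int_0^1 e^{\theta u + (1-\theta)\tilde u}\, d\theta > 0$, together with $w(0) = 0$ and, via the integral representation, $w(x) = O(|x|^{2s-1})$ at infinity. The goal is to show $w \equiv 0$. My plan is a nonlocal shooting argument: parametrize even solutions by $\tau = u(0)$, show via the implicit function theorem that $\tau \mapsto u_\tau$ is smooth on an interval, and prove the nondegeneracy statement that the kernel of $\LL_u$ on the space of even functions in $W_s$ is one-dimensional, spanned by a generator with value $1$ at the origin (namely $\partial_\tau u_\tau$, obtained by differentiating the shooting family). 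Since the hypothesis $K \not\equiv \mathrm{const}$ breaks the scaling symmetry responsible for the kernel element $x \partial_x u + 2s$ in the constant-$K$ setting, a Perron--Frobenius argument for $\LL_u$ restricted to even functions is expected to isolate this positive generator. An even $w \in W_s$ with $w(0) = 0$ must then be a multiple of it, hence $w \equiv 0$. Identifying the correct function space where the Perron--Frobenius principle applies, and quantifying how the strict monotonicity of $K$ removes the scaling kernel element, is the technical heart of the proof.
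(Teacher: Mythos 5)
Your items (ii) and (iii) follow the paper (bootstrap via Schauder-type estimates for the Riesz kernel; moving planes with the growth condition, noting that the center must be $0$ because otherwise $K$ would be forced to be constant), but the two central items contain genuine gaps. The most serious is (iv). If $u,\tilde u$ are even solutions with $u(0)=\tilde u(0)$, then $w=u-\tilde u$ solves the \emph{secant} equation $\Ds w = K V w$ with $V=\int_0^1 e^{\theta u+(1-\theta)\tilde u}\,d\theta$, and this is \emph{not} the equation $\LL_u w=0$; so even a sharp description of $\ker \LL_u$ in the even sector does not let you conclude $w\equiv 0$ from $w(0)=0$. Moreover, the tool you propose for that kernel description cannot work where you want to use it: a Perron--Frobenius argument identifies the \emph{principal} (positive) eigenfunction, but $0$ is not the bottom of the even spectrum of $\LL_u$ (the Morse index is at least one, with an even, sign-definite ground state below zero), and the putative even generator $\partial_\tau u_\tau$ (which degenerates to $x\partial_x u+2s$ for constant $K$) changes sign. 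This is exactly why the paper treats the even sector not by positivity of eigenfunctions but by the Laplace-transform identity $I[f]=\int_0^\infty (vf)H_\alpha(vf)\,dx \ge 0$ combined with the monotonicity bound $I[f]\le 0$ (Lemma \ref{lem:nondeg}), which shows that an even solution of the linearized \emph{integral} equation vanishing at the origin is zero; Perron--Frobenius is used only in the odd sector, where $\partial_x u$ has a sign on $(0,\infty)$. Finally, even granting nondegeneracy, a local implicit-function statement along a branch $\tau\mapsto u_\tau$ gives only local uniqueness; to rule out a second even solution with the same value at the origin one needs a global continuation argument with an anchor regime where uniqueness is known. The paper supplies this by working with the fixed-point maps $\Tss$, continuing in $(\lambda,\sigma)$ with a-priori bounds, using that $\Tss$ is a contraction for $\sigma\neq 0$ and $\lambda$ small, and then passing $\sigma\to 0$; your proposal has no analogue of this step.

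For (i), the phrase ``invoke Schauder's fixed-point theorem on an invariant subset'' is precisely where the difficulty sits: the natural map $\Ts_\lambda$ is only locally compact away from $0$ and (e.g.\ when $K$ does not decay, or decays slowly) $\Ts_\lambda[0]=\lambda\sqrt K\notin X_\alpha$, so no invariant closed convex set is available; the paper circumvents this by the Gaussian-damped maps $\Tss$ (Schaefer--Schauder for $\sigma\neq 0$) and then removes the damping via a Pohozaev-type identity, the reverse Hardy--Littlewood--Sobolev inequality and Helly compactness for symmetric-decreasing fixed points (Lemmas \ref{lem:HLS}--\ref{lem:nice}). You would need to supply this or an equivalent mechanism. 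For (v), relative compactness of the potential $Ke^u$ only shows that the negative spectrum of $\Ds-Ke^u$ consists of discrete eigenvalues; it does not exclude infinitely many eigenvalues accumulating at $0^-$, so ``relatively compact $\Rightarrow$ finite Morse index'' is not a valid inference. The paper instead uses the stretched-exponential decay $e^{u}\le Ce^{-d|x|^{2\alpha}}$ to get $|x|^{2\alpha}Ke^u\in L^1$ and then a quantitative Bargmann/CLR-type bound $N_{<0}(\Ds-V)\lesssim \||x|^{2\alpha}V\|_{L^1}+1$, followed by the min--max comparison $n_-(\LL)\le N_{<0}$. Your decay bound is the right input, but the spectral conclusion needs such a counting estimate, not mere relative compactness.
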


The proof of Theorem \ref{thm:big} will follow from our general analysis; see Section \ref{sec:thm_big} below for details. Notice that $u \in \Cloc^{2s + \gamma}(\R)$ implies that the equation $\Ds u = e^u$ holds in the pointwise sense in $\R$. Note also the growth condition stated in item (iii) above. Indeed, if $K$ does not decay faster than exponentially (in the sense given below), we can conclude that this growth condition for $u$ automatically holds and hence the uniqueness result is unconditional.

\begin{prop} \label{prop:main1}
Let $s$ and $K$ be as in Theorem \ref{thm:big} above and assume that $e^{\mu |x|} K \not \in L^1(\R)$ for all $\mu > 0$. Then any solution $u \in L_s(\R)$ of \eqref{eq:Gelf} satisfies $u(x) = O(|x|^{2s-1})$ as $|x| \to \infty$.
\end{prop}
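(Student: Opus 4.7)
The plan is to reconstruct $u$ from the nonnegative integrable source $f := K e^u \in L^1(\R)$ via a subtracted Riesz-type representation, then exclude a linear growth term using the non-integrability hypothesis on $K$.

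First, since $s > \frac{1}{2}$ and the spatial dimension is one, the Riesz kernel $|x-y|^{2s-1}$ grows rather than decays, so the naive representation $\int |x-y|^{2s-1} f(y)\,dy$ generally diverges. I would instead work with the subtracted potential
$$
w(x) := c_s \int_{\R} \bigl( |y|^{2s-1} - |x-y|^{2s-1} \bigr) f(y) \, dy,
$$
where $c_s > 0$ is the normalization making $w$ a distributional solution of $\Ds w = f$ with $w(0) = 0$. The key estimate is the uniform pointwise bound
$$
|w(x)| \leq C\, \|f\|_{L^1(\R)} \, |x|^{2s-1} \qquad \text{for all } x \in \R,
$$
obtained by splitting the integration at $|y|=2|x|$: on $\{|y|\leq 2|x|\}$ we bound each of the two terms by $C|x|^{2s-1}$, while on $\{|y|>2|x|\}$ a first-order Taylor expansion gives $\bigl||y|^{2s-1}-|x-y|^{2s-1}\bigr|\leq C|x||y|^{2s-2}$, which combined with $|y|^{2s-2}\leq (2|x|)^{2s-2}$ (valid since $2s-2<0$) also yields the bound $C|x|^{2s-1}$. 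The same splitting legitimates Fubini in the calculation $\int w\,\Ds\phi\,dx = \int f\phi\,dx$ for $\phi\in C_c^\infty(\R)$, once we invoke $\int \Ds\phi\,dx=0$ and the known action of $\Ds$ on $|x|^{2s-1}$.

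Both $u$ and $w$ lie in $L_s(\R)$ --- for $w$ this follows from the pointwise bound above, since $|x|^{2s-1}/(1+|x|^{1+2s}) \in L^1(\R)$ --- and both are distributional solutions of $\Ds(\cdot) = f$. Hence $v := u - w$ satisfies $\Ds v = 0$ as a tempered distribution. By the classical Liouville theorem for the fractional Laplacian (elements of $L_s(\R)$ annihilated by $\Ds$ are polynomials of degree strictly less than $2s$), and since $s \in (\frac{1}{2},1)$, the function $v$ must be affine, i.e.,
$$
u(x) = a + b x + w(x), \qquad a, b \in \R.
$$

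It remains to show $b=0$. Suppose, for contradiction, that $b>0$ (the case $b<0$ is symmetric by reflection). Since $|w(x)|\leq C|x|^{2s-1}$ with $2s-1<1$, we have $u(x) \geq \frac{b}{2} x$ for all $x$ sufficiently large, and hence
$$
\int_{\R} K(x)\, e^{u(x)} \, dx \;\geq\; \int_{x_0}^{\infty} K(x)\, e^{b x/2} \, dx.
$$
By evenness and positivity of $K$, the hypothesis $e^{\mu|x|}K\not\in L^1(\R)$ applied with $\mu=b/2$ forces the right-hand side to be infinite, contradicting $\int Ke^u<\infty$. Thus $b=0$, so $u = a + w = O(|x|^{2s-1})$ as $|x|\to\infty$. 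The main obstacle is Step~1, particularly the bookkeeping in the pointwise estimate for $w$ and the rigorous verification that $\Ds w = f$ distributionally; the subsequent Liouville correction and the contradiction argument are then essentially immediate.
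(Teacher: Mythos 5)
Your proof is correct and follows essentially the same route as the paper: you reconstruct $u$ via the subtracted kernel $|y|^{2s-1}-|x-y|^{2s-1}$ plus an affine term (this is the paper's Proposition \ref{prop:integral_form}), and then exclude the linear part by contradiction with $\int_\R K e^u\,dx<\infty$ using the hypothesis $e^{\mu|x|}K\notin L^1(\R)$, exactly as in Proposition \ref{prop:good_decay}. The only differences are cosmetic (direct Fubini instead of an $L^1$-density argument, and a splitting at $|y|=2|x|$ where the elementary bound $\bigl||x-y|^{2\alpha}-|y|^{2\alpha}\bigr|\leq|x|^{2\alpha}$ suffices).
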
  

\begin{proof}
This claim follows from the more general Proposition \ref{prop:good_decay} given below.
\end{proof}
 
 \begin{remark*}
 It seems conceivable that $K$ with sufficiently fast decay (e.\,g.~take a Gaussian function $K(x) = e^{-\beta x^2}$ with $\beta > 0$) leads to existence of solutions $u \in L_s(\R)$ of \eqref{eq:Gelf} with a linear growth at infinity, i.\,e., $u(x) \sim x$ as $|x| \to +\infty$ and hence such $u$ are not even-symmetric. In particular, we may expect that uniqueness  for solutions of \eqref{eq:Gelf} may fail for such fast decaying $K(x)$. We leave this as an interesting open future problem.
 \end{remark*}
%We complement the result of Proposition \ref{prop:main1} by showing that for sufficiently fast decaying $K(x)$ we can construct non-even solutions $u(x)$ with linear growth as $|x| \to \infty$; in particular, uniqueness of solutions in the sense above will fail. For instance, if we take $K(x)$ to be a Gaussian, we obtain the following result.
%
%\begin{prop} \label{prop:main2}
% Suppose $s \in (\frac 1 2,1)$ and let $K(x) = e^{-\beta x^2}$ with some $\beta > 0$. Then, for every $u_0 \in \R$ and $B \in \R$, there exists a solution $u \in L_s(\R) \cap C^\infty(\R)$ of \eqref{eq:Gelf} with
% $$
% u(0) = u_0 \quad \mbox{and} \quad u(x) = Bx + O(|x|^{2s-1}) \quad \mbox{as} \quad |x| \to \infty.
% $$
% As a consequence, there are infinitely many (non-even) solutions $u$ of \eqref{eq:Gelf} with the same value $u(0) = u_0$.
%\end{prop}
%
%We refer to Section \ref{sec:thm_big} below for the proof of Proposition \ref{prop:main2}.

\subsection*{Acknowledgments}
Both authors gratefully acknowledge financial support from the Swiss National Science Foundation (SNSF) under Grant No.~204121. 

\section{Strategy of Proofs and Outlook}
\label{sec:strategy}

Let us summarize the arguments needed to prove the main results in this paper. For the ease of presentation, we focus on the fractional Gelfand equation with $K \equiv 1$, i.\,e., we consider   
\be \label{eq:Gelf_intro}
\Ds u = e^u \quad \mbox{with} \quad \int_{\R} e^u \, dx < +\infty,
\ee 
where $s \in (\frac 1 2, 1)$ is given. In what follows, we give a sketch of the argument whilst leaving technical points like the choice of function spaces etc.~aside. 

As a starting point, we observe that any solution $u$ of \eqref{eq:Gelf_intro} satisfies a corresponding integral equation, whose formal differentiation yields that
\be
\pt_x u = -H_\alpha(e^u).
\ee 
Here the operator $H_\alpha := H \circ (-\Delta)^{-\alpha} = H \circ |\nabla|^{-2 \alpha}$ with $\alpha = s- \frac{1}{2}$ is the {\em conjugate Riesz potential} in one space dimension of order $2\alpha \in (0,1)$ and $H$ denotes the Hilbert transform on the real line. Inspired by the recent work \cite{AhLe-22}, we now introduce the auxiliary function $v := \sqrt{e^u}$. Since this means $u = \log(v^2)$, we arrive at the equation
\be \label{eq:v_intro}
\pt_x v = -\frac{1}{2} H_\alpha(v^2) v .
\ee
In fact, this reformulation in terms $v$ turns out to be highly beneficial, as it provides us with a differential equation with a nonlocal nonlinearity that can be handled in a convenient way. Indeed, we can easily recast  \eqref{eq:v_intro} into a fixed point equation of the form
\be
v = \Ts_\lambda[v] \quad \mbox{with} \quad \Ts_\lambda[v](x) := \lambda e^{-\frac 1 2 \int_0^x H_\alpha(v^2)(y) \, dy},
\ee   
where the parameter $\lambda > 0$ corresponds to the initial-value $v(0)= \lambda$. Thanks to the symmetry result in Theorem \ref{thm:reg_sym}, it suffices to consider even solutions $v$. Hence we can work on a suitable Banach space $X_\alpha \subset L^2(\R)$ of even functions and we seek fixed points of the map $\Ts_\lambda$ on $X_\alpha$ for given $\lambda >0$. The search for fixed points of $\Ts_\lambda$ can be seen as a nonlocal ersatz of a `shooting argument' prescribing the initial-valued $v(0)=\lambda$.
 
\subsection{Existence of Fixed Points}
To prove existence of fixed points of $\Ts$, we wish to use a-priori bounds and compactness methods in the spirit of Schauder's fixed point theorem. However, a first obstruction lies in the fact that $\mathbf{T}_\lambda[0] \equiv \lambda \not \in X_\alpha \subset L^2(\R)$. More generally, it seems far from obvious how to single out a (non-empty) closed convex subset $K \subset X_\alpha$ such that $\mathbf{T}_{\lambda} : K \to K$ holds. To deal with this challenge, we introduce an auxiliary class of maps given by
\be
\Tss[v] = e^{-\frac{1}{2} \sigma^2 x^2} \Ts_\lambda[v] .
\ee
with a parameter $\sigma \in \R$. We remark that finding fixed points of $\Tss$ corresponds to looking for even solutions $u \in L_s(\R)$ of $\Ds u = K e^u$ with $K e^u \in L^1(\R)$ for the Gaussian function $K(x) = e^{-\sigma^2 x ^2}$. If $\sigma \neq 0$, the map $\Tss : X_\alpha \to X_\alpha$ is seen to be well-defined, continuous and compact. Furthermore, by rather straightforward a-priori bounds on possible fixed points, we deduce that $\Tss$ does have a fixed point in $X_\alpha$ for all $\lambda >0$ and $\sigma \neq 0$ by applying the classical Schaefer--Schauder fixed point theorem.

However, the passage to the original problem with $\mathbf{T}^{(0)}_\lambda = \mathbf{T}_\lambda$ requires a careful analysis when taking the limit $\sigma \to 0$. Here, we shall make use of Pohozaev-type identities combined with the {\em reverse Hardy--Littlewood--Sobolev (HLS) inequality}: For all $\mu > 0$ and $q \in (0,1)$ it holds that
\be \label{ineq:HLS_intro}
\int_\R \! \int_\R \rho(x) |x-y|^{\mu} \rho(y) \,dx \,dy \geq C_{\mu, q} \left ( \int_{\R} \rho(x) \,dx \right )^\beta \left ( \int_{\R} \rho(x)^q \, dx \right )^{(2-\beta)/q} 
\ee
for nonnegative $\rho \in L^1(\R) \cap L^q(\R)$, provided that $q > \frac{1}{1+\mu}$ and with a suitable choice of $\beta=\beta(q,\mu)$. Here we use  the convention that $\rho \in L^q(\R)$ for $q \in (0,1)$ if $\rho$ is measurable and $\int_\R |\rho(x)|^q \,dx$ is finite. In particular, we shall only need the reverse HLS inequality in the so-called conformally invariant case when $\beta = 0$ which is exactly the case when $q = \frac{2}{2+\mu}$. See \cite{CaDeDoFrHo-19,Be-15,NgNg-17,DoZh-15} for various proofs of \eqref{ineq:HLS_intro} and related results.

With the help of the reverse HLS inequality with $\rho = v^2$ and the fact that all fixed points are shown to be symmetric-decreasing functions, we gain sufficient control to prove existence of fixed points for the limiting problem when $\sigma=0$.

\subsection{Uniqueness and Nondegeneracy of Fixed Points}
To prove uniqueness of fixed points $v \in X_\alpha$ of $\Tss$  for given $\lambda > 0$ and $\sigma \in \R$, we make use of an implicit function argument and a global continuation argument with respect to the parameters $(\lambda, \sigma)$. As a key step in this procedure, we show that the map 
$$
\Fs(v,\lambda, \sigma) = v-\Tss[v]
$$ 
has an invertible Fr\'echet derivative $D_v \Fs$ at any fixed point $v$ of $\Tss$. By compactness of $D_v \Tss[v]$ and standard Fredholm theory, this amount to proving that $1$ is not an eigenvalue of $D_v \Tss[v]$. By ideas that are reminiscent to the monotonicity formula for the fractional Laplacian $\Ds$, we prove this spectral `nondegeneracy' property. However, in contrast to \cite{FrLe-13,FrLeSi-16}, we emphasize the fact that we do not use the $s$-harmonic extension. Instead, we work with arguments based on the Laplace transform, thereby vastly generalizing the recent approach in \cite{AhLe-22} to study the fractional Liouville equation $(-\DD)^{1/2} u = K e^u$ in $\R$ with suitable conditions on the function $K$. See also the proof of Theorem \ref{thm:nondeg}, which further elaborates on this circle of ideas and exhibits a close connection to the Birman--Schwinger principle in the spectral analysis of Schr\"odinger type operators $H=(-\DD)^s  -V$.

Thanks to the general linearized invertibility result, we can construct locally unique branches $(\lambda, \sigma) \mapsto v(\lambda, \sigma)$ of fixed points. With the help of a-priori bounds, these branches can be extended globally to all $\lambda > 0$ and $\sigma \in \R$.  On the other hand, for $\sigma \neq 0$ and $0 < \lambda \ll 1$ sufficiently small, we also see that $\Tss$ is a strict contraction and we hence obtain global uniqueness in this regime. Finally, by straightforward arguments, we can extend this to global uniqueness to all values of $\lambda >0$ and $\sigma \in \R$. In particular, this completes the general uniqueness result stated in Theorem \ref{thm:unique}, which constitutes the central result of this paper.

\subsection{Outlook on Future Work}
In an ongoing companion work \cite{LaLe-25}, we consider the corresponding parabolic problem
$$
\pt_t u + \Ds u = e^u \quad \mbox{for $t > 0$ and $x \in \R$}
$$
with $s \in [\frac 1 2,1)$ and we study stability and instability (by finite-time blowup) of the steady state solutions $u=Q_s \in L_s(\R) \cap C^\infty(\R)$ provided by Theorem \ref{thm:existence}. 

Clearly, the present work allows for many other future directions. Here a natural question would be to extend the arguments to higher space dimensions
$$
\Ds u = e^u \quad \mbox{in $\R^n$}
$$ 
with general $s > n/2$. We hope to address this problem in future work.

%%%%%%%%%%%%%%%%%%%%%%%%%%%%%%
\section{Preliminaries} \label{sec:prelim}

We first collect some preliminaries facts whose proofs are quite elementary but worked out for the reader's convenience.

\subsection{Integral Equation and Conjugate Riesz Potential}
It will be convenient to introduce the parameter
$$
\alpha := s- \frac{1}{2} \in (0, \frac{1}{2})
$$
for  $s \in (\frac{1}{2}, 1)$ given. By straightforward arguments, we obtain the following result.

\begin{prop} \label{prop:integral_form}
Let $s \in (\frac{1}{2},1)$ and $\rho \in L^1(\R)$. Then $u \in L_s(\R)$ solves $\Ds u = \rho$ if and only if
$$
u(x) = -c_\alpha \int_{\R} \left ( |x-y|^{2 \alpha} - |y|^{2 \alpha} \right ) \rho(y) \, dy + A + B x
$$
with some constants $A, B \in \R$ and $c_\alpha = -\pi^{-1/2} 2^{-2\alpha-1} \frac{\Gamma(-\alpha)}{\Gamma(\alpha + \frac{1}{2})}>0$. 
\end{prop}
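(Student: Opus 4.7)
The plan is to interpret $-c_\alpha |x|^{2\alpha}$ as a fundamental solution of $\Ds$ modulo affine functions, and then combine a Fubini-type calculation with a fractional Liouville theorem to deduce the two implications. Since $2\alpha = 2s-1 \in (0,1)$, the kernel $|x|^{2\alpha}$ has slow polynomial growth, which is exactly the borderline growth allowed by the $L_s(\R)$ framework; this is why the formula takes its specific renormalized form (the subtraction of $|y|^{2\alpha}$ removes a $y$-dependent constant).

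The key computational input is the identity
$$
\Ds\!\bigl(-c_\alpha |x|^{2\alpha}\bigr) = \delta_0 \quad\text{modulo affine functions.}
$$
I would verify this via Fourier analysis: as a tempered distribution, $\widehat{|x|^{2\alpha}}$ equals a known constant multiple of $|\xi|^{-2\alpha-1}$ modulo a distribution supported at $\xi=0$; multiplying by $|\xi|^{2s} = |\xi|^{2\alpha+1}$ produces a constant, i.e.~a multiple of $\delta_0$, and the polynomial ambiguity in the dual variable corresponds precisely to the affine ambiguity ($A + Bx$) in the primal variable. Tracking the beta/Gamma constants in the standard formula $\widehat{|x|^{-\beta}} = \pi^{1/2} 2^{1-\beta}\Gamma(\tfrac{1-\beta}{2})/\Gamma(\tfrac{\beta}{2}) \,|\xi|^{\beta-1}$ with $\beta = -2\alpha$ and solving for the normalization yields exactly $c_\alpha = -\pi^{-1/2} 2^{-2\alpha-1}\Gamma(-\alpha)/\Gamma(\alpha+\tfrac12)$, which is positive since $\Gamma(-\alpha)<0$ for $\alpha\in(0,\tfrac12)$.

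Given this, the "if" direction goes as follows. Define
$$
u_0(x) := -c_\alpha \int_\R \bigl(|x-y|^{2\alpha} - |y|^{2\alpha}\bigr)\rho(y)\,dy.
$$
Subadditivity of $t\mapsto t^{2\alpha}$ on $[0,\infty)$ gives $\bigl||x-y|^{2\alpha}-|y|^{2\alpha}\bigr|\le |x|^{2\alpha}$, so $|u_0(x)|\le c_\alpha |x|^{2\alpha}\|\rho\|_{L^1}$ and in particular $u_0\in L_s(\R)$ (with $u_0(x)=O(|x|^{2s-1})$). To verify $\Ds u_0 = \rho$ distributionally, test against $\phi\in C_c^\infty(\R)$, apply Fubini, and observe that the $|y|^{2\alpha}$ term drops out because $\int_\R \Ds \phi\,dx = \widehat{\Ds\phi}(0) = 0$. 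The remaining expression reduces, via the fundamental-solution identity above (translated to the point $y$), to $\int \rho(y)\phi(y)\,dy$.

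For the "only if" direction, let $u\in L_s(\R)$ be any solution and set $w := u - u_0 - A - Bx$ for $A,B$ to be chosen; it satisfies $\Ds w = 0$ in $\R$ and lies in $L_s(\R)$. I would invoke the fractional Liouville theorem: any $w\in L_s(\R)$ with $\Ds w=0$ is a polynomial of degree strictly less than $2s$, hence affine when $s\in(\tfrac12,1)$. This can be shown by standard mean-value or Fourier arguments (the distribution $\widehat{w}$ must be supported at the origin, and the $L_s$ growth restriction bounds the order of the derivative of $\delta_0$ involved). Choosing $A,B$ appropriately then gives the claimed representation. The main technical obstacle I anticipate is the bookkeeping of constants and the precise handling of the distribution supported at $\xi=0$ in the Fourier computation, which is what produces the affine ambiguity on both sides of the equivalence; everything else is routine estimation.
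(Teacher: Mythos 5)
Your proposal is correct and follows essentially the same route as the paper: the renormalized potential $u_0(x)=-c_\alpha\int(|x-y|^{2\alpha}-|y|^{2\alpha})\rho(y)\,dy$ with the subadditivity bound $\bigl||x-y|^{2\alpha}-|y|^{2\alpha}\bigr|\le|x|^{2\alpha}$, the fundamental-solution identity $\Ds(-c_\alpha|x|^{2\alpha})=\delta_0$ (which the paper cites and you verify by the Fourier formula, with the constant coming out right), and the fractional Liouville theorem in $L_s(\R)$ to reduce the kernel to affine functions. The only execution-level difference is that you check $\Ds u_0=\rho$ by a direct Fubini computation against $\Ds\phi$ (using $\int_\R\Ds\phi\,dx=0$ and the decay $|\Ds\phi(x)|\lesssim\langle x\rangle^{-1-2s}$), whereas the paper approximates $\rho$ by $\rho_k\in C_c^\infty(\R)$ and passes to the limit; both are routine and rest on the same Green's function fact.
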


\begin{proof}
Define the function
\be \label{eq:u_def}
\tilde{u}(x) := -c_\alpha \int_\R \left ( |x-y|^{2 \alpha} - |y|^{2 \alpha} \right ) \rho(y) \, dy .
\ee
From the elementary inequality $||x-y|^{2 \alpha} - |y|^{2 \alpha}| \leq |x|^{2 \alpha}$ for $\alpha \in (0, \frac{1}{2})$, we deduce the pointwise bound $|\tilde{u}(x)| \leq c_\alpha |x|^{2 \alpha} \| \rho \|_{L^1}$ for $x \in \R$. This implies that $\tilde{u} \in L_s(\R)$. We claim that
$$
\Ds \tilde{u} = \rho \quad \mbox{in $\R$}.
$$
To show this, let $(\rho_k)_{k \geq 1} \subset C^\infty_c(\R)$ be a sequence with $\rho_k \to \rho$ in $L^1(\R)$ and define the functions $\tilde{u}_k \in L_s(\R)$ accordingly by replacing $\rho$ with $\rho_k \in C^\infty_c(\R)$ on the right side in \eqref{eq:u_def}. From the well-known fact $\Ds (G_s +\mbox{const.})= \delta$ in $\mathcal{D}'$ with the Green's function $G_s(x) = -c_\alpha |x|^{2\alpha}$, we deduce that $\Ds \tilde{u}_k = \rho_k$. Now, let $\phi \in C^\infty_c(\R)$ be given. Since $\rho_k \to \rho$ in $L^1(\R)$, we see that $\int_{\R} \rho_k \phi \to \int_{\R} \rho \phi$. On the other hand, using the well-known fact that $|\Ds \phi(x)| \leq C \langle x \rangle^{-1-2s}$ for $\phi \in C^\infty_c(\R)$ together with the bound $|\tilde{u}_k(x) - \tilde{u}(x)| \leq c_\alpha |x|^{2\alpha} \| \rho_k - \rho \|_{L^1}$, we conclude that $\int_\R w_k \Ds \phi \to \int_\R w \Ds \phi$. Hence by passing to the limit $k \to \infty$, we find that $\tilde{u} \in L_s(\R)$ solves $\Ds \tilde{u} = \rho$.

Now, assume that $u \in L_s(\R)$ solves $\Ds u = \rho$. Then $w:= u -\tilde{u} \in L_s(\R)$ solves $\Ds w = 0$ in $\R$. By known results (see, e.\,g., \cite{Fa-16}), this means that $w(x) = A + B x$ is an affine function. [Note that $A+Bx \in L_s(\R)$ for all $A, B \in \R$ when $s > \frac{1}{2}$.] This proves the claimed formula for $u$.

On the other hand, let $u$ be defined by the formula for given $\rho \in L^1(\R)$ with some constants $A,B \in \R$. Since $|u(x)| \leq c_\alpha |x|^{2 \alpha} +|A| + |Bx|$ for $x \in \R$, we conclude that $u \in L_s(\R)$ holds. From the discussion above we deduce that $u$ solves $\Ds u = \rho$.
\end{proof}

In view of Proposition \ref{prop:integral_form}, we introduce the following definition.

\begin{definition*}
Let $s \in (\frac{1}{2},1)$ and $\rho \in L^1(\R)$. We say that $u \in L_s(\R)$ is an \textbf{integral solution} of $\Ds u = \rho$ provided that
$$
u(x) = O(|x|^{2 \alpha}) \quad \mbox{as} \quad |x| \to \infty,
$$ 
i.\,e., we have that $B=0$ holds in Proposition \ref{prop:integral_form}. 

\end{definition*}

It is easy to see that even solutions $u(x)=u(-x)$ above must be necessarily integral. Also, we have the following sufficient condition with regard to solutions of fractional Gelfand-type equations in $\R$.

\begin{prop} \label{prop:good_decay}
Let $s \in (\frac{1}{2}, 1)$. Suppose that $K(x)=K(-x) \geq 0$ is an even, non-negative and measurable function on $\R$ with $e^{\mu |x|} K \not \in L^1(\R)$ for all $\mu >0$. Then any solution $u \in L_s(\R)$ of
$$
\Ds u = K e^u \quad \mbox{with} \quad \int_{\R} K e^u \, dx < +\infty
$$
must be integral. In particular, any solution $u \in L_s(\R)$ of $\Ds u = e^u$ with $\int_\R e^u \, dx < +\infty$ is integral.
\end{prop}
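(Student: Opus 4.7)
The plan is to read off the decay/growth of any such $u$ directly from the representation formula in Proposition~\ref{prop:integral_form}, applied with $\rho := K e^u \in L^1(\R)$. Any solution takes the form
\[
u(x) = -c_\alpha \int_\R \bigl(|x-y|^{2\alpha}-|y|^{2\alpha}\bigr)\rho(y)\,dy + A + Bx
\]
with constants $A, B \in \R$, and the elementary inequality $\bigl||x-y|^{2\alpha}-|y|^{2\alpha}\bigr| \le |x|^{2\alpha}$ (valid because $2\alpha \in (0,1)$, which is where $s<1$ enters) shows the integral term is uniformly $O(|x|^{2\alpha})$. Consequently $u$ is integral precisely when $B=0$, and the task reduces to ruling out $B \neq 0$.

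I will argue by contradiction: suppose $B \neq 0$. By symmetry, $\tilde u(x) := u(-x)$ solves the same equation (here I use that $K$ is even) and has affine coefficient $-B$ in its representation, so I may assume $B>0$. Since $2\alpha = 2s-1 < 1$, the linear term dominates at $+\infty$, and there exists $R>0$ with $u(x) \ge (B/2)x$ for all $x \ge R$. Setting $\mu := B/2 > 0$ and using non-negativity of $K$, one gets
\[
+\infty > \int_\R K(x)\, e^{u(x)}\,dx \;\ge\; \int_R^\infty K(x)\, e^{\mu x}\,dx \;=\; \int_R^\infty K(x)\, e^{\mu|x|}\,dx .
\]
On the other hand, evenness of $K$ together with the standing hypothesis $e^{\mu|x|} K \notin L^1(\R)$ forces $\int_0^\infty K(x)\, e^{\mu|x|}\,dx = +\infty$. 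This contradicts the displayed inequality, so $B=0$, i.e., $u$ is integral.

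The crucial structural input is the strict inequality $2\alpha = 2s-1 < 1$, which ensures the nonlocal part of $u$ grows strictly slower than any linear term; this is exactly why the argument fails at $s=\tfrac12$. There is no serious technical obstacle — the proof is essentially a decay/growth comparison on top of the representation formula. The final clause of the proposition is immediate, since $K \equiv 1$ trivially satisfies $e^{\mu|x|} \notin L^1(\R)$ for every $\mu > 0$, so the general statement applies verbatim.
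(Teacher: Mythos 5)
Your argument is correct and follows essentially the same route as the paper: represent $u$ via Proposition~\ref{prop:integral_form}, bound the nonlocal term by $O(|x|^{2\alpha})$ with $2\alpha<1$, and rule out $B\neq 0$ by playing linear growth of $u$ against the hypothesis $e^{\mu|x|}K\notin L^1(\R)$ and the finiteness of $\int_\R K e^u\,dx$. One small point: since you only have $u(x)\ge (B/2)x$ for $x\ge R$, your contradiction between $\int_R^\infty K e^{\mu x}\,dx<\infty$ and $\int_0^\infty K e^{\mu x}\,dx=+\infty$ tacitly uses $\int_0^R K e^{\mu x}\,dx<\infty$; this does hold (the representation formula bounds $u$ from below on $[0,R]$, so $K\lesssim Ke^u\in L^1$ there), and the paper sidesteps it by using the pointwise lower bound $u(x)\ge -\eps x - C + A + Bx$ valid on all of $[0,\infty)$.
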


\begin{proof}
Define  $0 \leq \rho := K e^u \in L^1(\R)$. By Proposition \ref{prop:integral_form} and its proof, we deduce that
$$
u(x) \geq -c_\alpha |x|^{2\alpha} \| \rho \|_{L^1} + A + Bx \quad \mbox{for $x \in \R$}.
$$
with some $A, B \in \R$. Since $K \geq 0$ is even and $e^{\mu |x|} K \not \in L^1(\R)$ for all $\mu>0$, we have that $\int_{0}^\infty K(x) e^{\mu x} \,dx = +\infty$ for all $\mu > 0$. Suppose now that $B >0$ holds. Since $\alpha \in (0, \frac 1 2)$, we can find for any $0 < \eps < B$ and a constant $C=C(\eps, \| \rho \|_{L^1}) > 0$ such that $-c_\alpha \| \rho \|_{L^1} |x|^{2 \alpha} \geq -\eps x - C$ for $x \geq 0$. Therefore, we get
$$
+\infty > \int_0^\infty K(x) e^{u(x)} \, dx \geq  e^{-C+ A} \int_{0}^\infty K(x) e^{(B-\eps) x} \, dx = +\infty,
$$
which is a contradiction. The case $B < 0$ can be treated in the same way by taking integrals over $(-\infty,0]$. Thus we see that $B=0$ holds and hence $u$ is an integral solution.
\end{proof}

Next, we rewrite the integral term in Proposition \ref{prop:integral_form} using  the {\em conjugate Riesz potential operator} defined as
\be
\boxed{(H_\alpha f)(x) := d_\alpha \int_{\R} \frac{x-y}{|x-y|^{2-2 \alpha}} f(y) \, dy = d_\alpha \int_\R \frac{\mathrm{sgn}(x-y)}{|x-y|^{1-2 \alpha}} f(y) \, dy }
\ee 
for suitable functions $f$, where  $\alpha \in (0,\frac{1}{2})$ and the positive constant $d_\alpha = 2 \alpha c_\alpha > 0$ with $c_\alpha > 0$ taken from Proposition \ref{prop:integral_form} above. On suitable functions spaces, we deduce that
$$
H_\alpha = H \circ (-\Delta)^{-\alpha} .
$$
Here 
$$
H f(x) = \frac{1}{\pi} \mathrm{p.v.} \int_\R \frac{f(y)}{x-y} \, dy,  \quad  (-\Delta)^{-\alpha} f(x) = |\nabla|^{-2\alpha} f (x) = \frac{1}{\gamma_{2\alpha}} \int_\R \frac{f(y)}{|x-y|^{1-2 \alpha}} \, dy
$$ 
denote the Hilbert transform and the Riesz potential operator in one space dimension with $\alpha  \in (0,\frac{1}{2})$ and $\gamma_s = \pi^{1/2} 2^{s} \frac{\Gamma(\frac{s}{2})}{\Gamma(\frac{1-s}{2})}$, respectively. We have the following mapping properties of $H_\alpha$, which will be relevant below.

\begin{lem} \label{lem:H_a}
For any $\alpha \in (0, \frac{1}{2})$, the map $H_\alpha : L^1(\R) \cap L^\infty(\R) \to  L^\infty(\R)$ is bounded. 

Moreover, if $f \in L^1(\R) \cap L^\infty(\R)$, then the function $(H_\alpha f )(x)$ is continuous. If in addition $f=f^*$ is symmetric-decreasing, then $(H_\alpha f)(x)$ is an odd function with $(H_\alpha f)(x) \geq 0$ for $x \geq 0$ and we have strict positivity $(H_\alpha f)(x) > 0$ for all $x > 0$ whenever $f \not \equiv 0$.
\end{lem}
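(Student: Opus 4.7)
I would recast $H_\alpha$ as the convolution operator $H_\alpha f = k \ast f$ with kernel
$$
k(x) := d_\alpha\,\mathrm{sgn}(x)\,|x|^{2\alpha-1}.
$$
The decisive structural fact is that $2\alpha - 1 \in (-1,0)$: the kernel is integrable near the origin and vanishes at infinity. I would split $k = k_1 + k_2$ with $k_1 := k\,\mathbf{1}_{|x| \le 1} \in L^1(\R)$ and $k_2 := k\,\mathbf{1}_{|x| > 1} \in L^\infty(\R)$ satisfying $\|k_2\|_\infty = d_\alpha$, so that Young's inequality yields
$$
\|H_\alpha f\|_\infty \le \|k_1\|_{L^1}\|f\|_\infty + d_\alpha\|f\|_{L^1}
$$
for every $f \in L^1(\R) \cap L^\infty(\R)$. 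This is the boundedness assertion.

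For continuity I would handle the two pieces separately: $k_1 \ast f$ is uniformly continuous because the convolution of an $L^1$ function against a bounded function is uniformly continuous (via $L^1$-continuity of translations), while $k_2 \ast f$ is pointwise continuous by dominated convergence, using that $k_2$ is continuous off the null set $\{|x-y|=1\}$ and is dominated on $\R$ by the integrable majorant $d_\alpha |f|$.

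For the symmetric-decreasing case, the natural first step is the substitution $t = x-y$, which yields the clean representation
$$
(H_\alpha f)(x) = d_\alpha \int_0^\infty t^{2\alpha-1}\bigl[f(x-t) - f(x+t)\bigr]\,dt.
$$
Oddness follows by sending $x \mapsto -x$ and invoking $f(-x \pm t) = f(x \mp t)$. For $x > 0$ and $t > 0$ the elementary inequality $|x-t| \le x+t$ combined with the monotone-decreasing property of $f$ in $|\cdot|$ gives $f(x-t) \ge f(x+t) \ge 0$; in particular, the integrand is nonnegative and $(H_\alpha f)(x) \ge 0$.

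The main obstacle, as usual, is the strict positivity. Setting $r_0 := \sup\{r \ge 0 : f(r) > 0\} \in (0,+\infty]$ (positive since $f \not\equiv 0$ and $f = f^*$), I would argue by cases. If $r_0 < +\infty$, any $t$ in the interval $(|x-r_0|,\,x+r_0)$ has positive length and satisfies $|x-t| < r_0 < x+t$, so that $f(x-t) > 0 = f(x+t)$ on this set. If $r_0 = +\infty$, then $f(x-t) = f(x+t)$ holding a.e.\ in $t > 0$ would force $f$ to be constant on $(0,\infty)$, contradicting $f \in L^1(\R)$; thus strict inequality must hold on a set of positive $t$-measure. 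In either case the positivity of $t^{2\alpha-1}$ forces $(H_\alpha f)(x) > 0$.
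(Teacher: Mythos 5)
Your proof is correct, and it follows the same overall skeleton as the paper (split the convolution kernel, then use the representation $(H_\alpha f)(x)=d_\alpha\int_0^\infty t^{2\alpha-1}\left(f(x-t)-f(x+t)\right)dt$ for the symmetry statements), but it diverges in two worthwhile places. For boundedness and continuity the paper splits $k_\alpha$ into two pieces lying in $L^{p_1}$ and $L^{p_2}$ with $1<p_1<p_2<\infty$, applies Young/H\"older with the dual exponents, and then quotes the fact that $L^p\ast L^{p'}$ is continuous; your $L^1+L^\infty$ splitting is more elementary, gives explicit constants such as $\|k_1\|_{L^1}\|f\|_\infty+d_\alpha\|f\|_{L^1}$, and your two continuity arguments (translation continuity in $L^1$ for $k_1\ast f$, dominated convergence for $k_2\ast f$) are both valid --- in fact for the second piece you could have reused the translation argument as well, since $\|\tau_h f-f\|_{L^1}\to 0$ handles $L^\infty\ast L^1$ too. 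The more substantive difference is the strict positivity. The paper disposes of it in one sentence by observing that $\varphi(y)=f(x-y)+f(x+y)\not\equiv 0$, which as written does not by itself force the integral of the nonnegative \emph{difference} $f(x-y)-f(x+y)$ to be positive; your argument via $r_0=\sup\{r\ge 0: f(r)>0\}$, with the case distinction $r_0<\infty$ (where $f(x-t)>0=f(x+t)$ on the nondegenerate interval $(|x-r_0|,x+r_0)$) and $r_0=\infty$ (where a.e.\ equality plus monotonicity would make $f$ essentially constant and positive, contradicting $f\in L^1$), supplies exactly the missing monotonicity-and-integrability input and is the more complete treatment of this point.
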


\begin{remarks*}
1) We refer to \cite{LiLo-01} for a general background on symmetric-decreasing rearrangements.

2) Using the weak Young inequality, we can show that $H_\alpha : L^p(\R) \to L^q(\R)$ is bounded for any $1 < p < \frac{1}{2 \alpha}$ and $1 < q < \infty$ with $\frac{1}{q} = \frac{1}{p} - 2 \alpha$. We will not need this fact here.
\end{remarks*}

\begin{proof}
We write $H_\alpha f = k_\alpha \ast f$ with $k_\alpha(x) =  \frac{\sgn(x)}{|x|^{1-2 \alpha}} \mathds{1}_{|x| \leq 1} + \frac{\sgn(x)}{|x|^{1-2 \alpha}} \mathds{1}_{|x| > 1} \in L^{p_1}(\R) + L^{p_2}(\R)$ for some $1 < p_1 < p_2  <\infty$.  Applying Young's and H\"older's inequality, it follows that 
$$
\| H_\alpha f \|_{L^\infty} \leq C \left ( \| f \|_{L^{q_1}} + \| f \|_{L^{q_2}} \right ) \leq \| f \|_{L^1 \cap L^\infty}
$$
for $q_1, q_2 \in (1, \infty)$ with $\frac{1}{p_i} + \frac{1}{q_i} =1$ for $i=1,2$. This shows that $H_\alpha : L^1(\R) \cap L^\infty(\R) \to L^\infty(\R)$ is bounded. Moreover, it is a well-known fact that $(g \ast f)(x)$ is a continuous function provided that $g \in L^p(\R)$ and $f \in L^q(\R)$ with $1 < p,q < \infty$ and $\frac{1}{p} + \frac{1}{q}=1$.

Next, we note
\be \label{eq:H_alpha}
 (H_\alpha f)(x) = d_\alpha \int_\mathbb{R} \frac{x-y}{|x-y|^{2-2\alpha}} f(y) \, dy = d_\alpha \int_0^\infty \frac{1}{y^{1-2 \alpha}} \left(f(x-y)-f(x+y)\right) dy,
\ee
 which shows that $(H_\alpha f)(x)$ is an odd function in $x$ whenever $f$ is even. Furthermore if $f=f^*$ is symmetric-decreasing, we see that $f(x-y)\geq f(x+y)$ for all $x,y\geq 0$, which implies that $(H_\alpha f)(x) \geq 0$ for $x \geq 0$. Finally, if we assume that $f \not \equiv 0$ holds. For any $x > 0$ fixed, the nonnegative function $\phi(y) = f(x-y) + f(x+y) \geq 0$ satisfies $\phi \not \equiv 0$ and we deduce from \eqref{eq:H_alpha} that $(H_\alpha f)(x) > 0$ for $x > 0$.
\end{proof}

Next, we have the following identity involving the conjugate Riesz potential in our problem.  
\begin{prop} \label{prop:riesz_integral}
For any $\rho \in L^1(\R) \cap L^\infty(\R)$, it holds that
$$
-c_\alpha \int_{\R} (|x-y|^{2 \alpha} - |y|^{2 \alpha}) \rho(y) \, dy = -\int_0^x H_\alpha(\rho)(y) \, dy \quad \mbox{for $x \in \R$}.
$$
\end{prop}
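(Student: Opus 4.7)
The plan is to reduce the identity to a direct application of Fubini's theorem combined with an elementary antiderivative computation. Specifically, writing out the definition of $H_\alpha(\rho)$, the right-hand side becomes
$$
-\int_0^x H_\alpha(\rho)(y) \, dy = -d_\alpha \int_0^x \! \int_\R \frac{\mathrm{sgn}(y-z)}{|y-z|^{1-2\alpha}} \rho(z) \, dz \, dy,
$$
and the goal is to interchange the order of integration and evaluate the inner integral in $y$ via the antiderivative $\frac{d}{dy}|y-z|^{2\alpha} = 2\alpha \cdot \mathrm{sgn}(y-z)|y-z|^{2\alpha-1}$. This gives
$$
\int_0^x \frac{\mathrm{sgn}(y-z)}{|y-z|^{1-2\alpha}} \, dy = \frac{1}{2\alpha}\bigl(|x-z|^{2\alpha} - |z|^{2\alpha}\bigr),
$$
and then the identity $d_\alpha = 2\alpha c_\alpha$ matches the two sides exactly.

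The only nontrivial point is justifying Fubini, which requires absolute integrability of the double integrand on $[0,x] \times \R$ (or $[x,0] \times \R$ if $x<0$). For this I would estimate, for fixed $y$,
$$
\int_\R \frac{|\rho(z)|}{|y-z|^{1-2\alpha}} \, dz \leq \|\rho\|_{L^\infty} \int_{|w| \leq 1} \frac{dw}{|w|^{1-2\alpha}} + \|\rho\|_{L^1},
$$
which is finite and bounded uniformly in $y$ because $\rho \in L^1(\R) \cap L^\infty(\R)$ and $1-2\alpha < 1$. Integrating this bound over the bounded $y$-interval between $0$ and $x$ yields finite total mass, so Tonelli/Fubini applies.

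The main obstacle, if any, is this integrability check, and it is handled cleanly by the hypothesis $\rho \in L^1(\R) \cap L^\infty(\R)$ together with the range $\alpha \in (0, \tfrac{1}{2})$ which ensures the singular kernel $|w|^{2\alpha - 1}$ is locally integrable. Everything else is a one-line antiderivative computation.
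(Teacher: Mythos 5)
Your proof is correct, and it takes a somewhat different, in fact more direct, route than the paper's. The paper proves the identity by first showing that the left-hand side $u(x)$ has weak derivative $\partial_x u = -H_\alpha(\rho)$: it tests against $\phi \in C^\infty_c(\R)$, applies Fubini there, and moves the derivative onto the kernel via $\partial_x\big(|x-y|^{2\alpha}-|y|^{2\alpha}\big) = 2\alpha\,\sgn(x-y)\,|x-y|^{2\alpha-1}$; it then invokes the continuity of $H_\alpha(\rho)$ from Lemma \ref{lem:H_a} to upgrade the weak derivative to a classical one and integrates from $0$ to $x$, using $u(0)=0$. You instead evaluate the right-hand side directly: Tonelli/Fubini on the strip between $0$ and $x$ times $\R$, justified exactly as you indicate by splitting the kernel near and away from its singularity and using $\rho \in L^1(\R)\cap L^\infty(\R)$ together with $2\alpha-1>-1$, followed by the explicit antiderivative $\int_0^x \sgn(y-z)\,|y-z|^{2\alpha-1}\,dy = \tfrac{1}{2\alpha}\big(|x-z|^{2\alpha}-|z|^{2\alpha}\big)$, which is legitimate for every $z$ (including $z$ between $0$ and $x$) because $y \mapsto |y-z|^{2\alpha}$ is absolutely continuous with locally integrable derivative; the normalization $d_\alpha = 2\alpha c_\alpha$ then matches the two sides. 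Both arguments rest on the same two ingredients (Fubini and the derivative of $|x-y|^{2\alpha}$), but yours avoids the detour through distributional derivatives and does not need the continuity of $H_\alpha(\rho)$ at all, whereas the paper's route has the side benefit of establishing the pointwise differential identity $\partial_x u = -H_\alpha(\rho)$, which is the form reused elsewhere (e.g.\ in Lemma \ref{lem:reformulation} and the proof of Theorem \ref{thm:reg_sym}); in your setup that identity still follows a posteriori by differentiating the proven integral identity.
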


\begin{remark*}
The condition that $\rho \in L^1(\R)$ is also bounded could be relaxed, but this choice turns out to be sufficient and convenient for our purposes here. 
\end{remark*}

\begin{proof}
Let $u(x)$ denote the left-hand side of the claimed equality. We claim that
\be \label{eq:u_weak}
\pt_x u = -H_\alpha(\rho)
\ee
in the sense of weak derivatives. Indeed, let $\phi \in C^\infty_c(\R)$ be given. We readily check that we can apply Fubini's theorem to deduce that
\begin{align*}
\int_\R u(x) \pt_x \phi(x) \, dx &= -c_\alpha \int_\R \left ( \int_\R (|x-y|^{2 \alpha} - |y|^{2 \alpha}) \rho(y) \, dy \right ) \pt_x \phi(x) \, dx \\
& = -c_\alpha \int_\R \rho(y) \left ( \int_{\R} (|x-y|^{2 \alpha}- |y|^{2 \alpha}) \pt_x \phi(x) \, dx \right ) dy \\
& = 2 \alpha c_\alpha \int_{\R} \rho(y) \left ( \frac{x-y}{|x-y|^{2-2 \alpha}}  \phi(x) \, dx \right ) dy \\
& = 2 \alpha c_\alpha \left ( \int_{\R} \frac{x-y}{|x-y|^{2-2\alpha}} \rho(y) \,dy \right ) \phi(x) \, dx,
\end{align*}
where we also used that $\pt_x (|x-y|^{2 \alpha}- |y|^{2 \alpha}) = 2 \alpha \frac{x-y}{|x-y|^{2-2 \alpha}}$ in the sense of weak derivatives. This proves \eqref{eq:u_weak}. Since $H_\alpha(\rho)(x)$ is continuous for $\rho \in L^1(\R) \cap L^\infty(\R)$ by Lemma \ref{lem:H_a}, we conclude that $\pt_x u$ is of class $C^0$ and hence the claimed identity follows from integration and noticing that $u(0)=0$.
\end{proof}

\subsection{Regularity, Asymptotic Behavior, and Symmetry}

For an integer $k  \geq 0$ and a real number $\beta \in (0,1]$, we use $\Cloc^{k,\beta}(\R)$ the set of functions $u : \R \to \R$ of class $C^k$ such that $\pt^k u$ is locally H\"older continuous of order $\beta$. For a real number $\gamma > 0$, we use the standard notation $\Cloc^\gamma(\R) \equiv \Cloc^{\lfloor \gamma \rfloor, \gamma- \lfloor \gamma \rfloor}(\R)$.

\begin{lem} \label{lem:regularity}
Let $s \in (\frac 1 2, 1)$ and suppose $u \in L_s(\R)$ solves $\Ds u = \rho$ with $\rho \in L^1(\R)$. If $\rho \in \Cloc^\gamma(\R)$ with some $\gamma > 0$, then $u \in \Cloc^{\gamma+2s}(\R)$.
\end{lem}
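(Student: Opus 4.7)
The plan is to combine a standard localization argument with the classical interior Schauder estimate for the fractional Laplacian. Fix $x_0 \in \R$ and choose a cutoff $\chi \in C^\infty_c(\R)$ with $\chi \equiv 1$ on an open neighborhood $U$ of $x_0$. Split $\rho = \rho_1 + \rho_2$ with $\rho_1 := \chi \rho \in C^\gamma_c(\R) \cap L^1(\R)$ and $\rho_2 := (1-\chi)\rho \in L^1(\R) \cap \Cloc^\gamma(\R)$; in particular $\rho_2$ vanishes on $U$. Using Proposition \ref{prop:integral_form}, define
\[ w_i(x) := -c_\alpha \int_\R \bigl( |x-y|^{2\alpha} - |y|^{2\alpha} \bigr) \rho_i(y) \, dy \qquad (i = 1, 2), \]
so that each $w_i \in L_s(\R)$ solves $\Ds w_i = \rho_i$ in $\R$. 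The difference $u - w_1 - w_2 \in L_s(\R)$ then satisfies $\Ds(u - w_1 - w_2) = 0$, hence by Proposition \ref{prop:integral_form} equals an affine function $A + Bx$ with some $A, B \in \R$.

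For the tail term $w_2$, choose $U' \Subset U$ and set $\delta := \mathrm{dist}(\overline{U'}, \R \setminus U) > 0$. For every $x \in U'$ and $y \in \mathrm{supp}\,\rho_2$ one has $|x - y| \geq \delta$, so for each integer $k \geq 1$ the derivative $\pt_x^k |x-y|^{2\alpha}$ is of order $|x-y|^{2\alpha - k}$ with $2\alpha - k < 0$, and is therefore uniformly bounded by a constant $C_k(\delta, \alpha)$ on $U' \times \mathrm{supp}\,\rho_2$. Since $\rho_2 \in L^1(\R)$, differentiation under the integral sign is justified at every order and yields $w_2 \in C^\infty(U')$.

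For the local term $w_1$, the source $\rho_1 \in C^\gamma_c(\R)$ is compactly supported and H\"older continuous of exponent $\gamma$, so the classical interior Schauder estimate for $\Ds$ (see e.g.\ Silvestre's regularity theory or the work of Ros-Oton--Serra) gives $w_1 \in \Cloc^{\gamma + 2s}(\R)$. Combining with the smoothness of $w_2$ on $U'$ and of the affine term, we conclude $u \in \Cloc^{\gamma + 2s}$ on a neighborhood of $x_0$; since $x_0 \in \R$ was arbitrary, the lemma follows. The only point of substance is the cited Schauder estimate for $w_1$; the remainder is a routine differentiation-under-the-integral argument based on the explicit representation from Proposition \ref{prop:integral_form}.
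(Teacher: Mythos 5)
Your proof is correct and follows essentially the same scheme as the paper: split $\rho = \chi\rho + (1-\chi)\rho$ near $x_0$, reduce via the representation of Proposition \ref{prop:integral_form} modulo an affine (hence smooth) function, and invoke a fractional Schauder estimate for the compactly supported H\"older part. The only cosmetic difference is the tail term: the paper notes that $\Ds u_2 = 0$ in the ball and cites interior smoothness of $s$-harmonic functions, whereas you differentiate the explicit kernel under the integral sign using $|x-y|\geq \delta$ on $U'\times \mathrm{supp}\,\rho_2$, which is an equally valid (and somewhat more self-contained) way to get smoothness there.
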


\begin{proof}
This follows from standard Schauder type arguments. Indeed, in view of Proposition \ref{prop:integral_form} and since affine functions are smooth, it suffices to consider the case
$$
u(x) = -c_\alpha \int_\R (|x-y|^{2 \alpha}-|y|^{2 \alpha}) \rho(y) \, dy.
$$
Let $x_0 \in \R$ be given and take $j \in C^\infty_c(\R)$ with $0 \leq j \leq 1$ and $j(x) = 1$ for $x$ in an open ball $B$ centered at $x_0$. We write
$$
\rho = j \rho + (1-j) \rho =: \rho_1 + \rho_2 .
$$
Let $u_k$ with $k=1,2$ be given by the above integral expression with $\rho_k$ for $k=1,2$ on the right-hand sides. Since $\Ds u_2 = \rho_2 = 0$ in $B$, it follows that $u_2 \in C^\infty(B)$. Furthermore, we note that $u_1 = j \rho \in \Cloc^\gamma(\R)$ has compact support. Thus by following standard Schauder-type estimates (e.\,g.~take an adaption of the proof \cite{LiLo-01}[Theorem 10.3] for Poisson's equation in $\R^n$), we deduce that $u_1 \in \Cloc^{\gamma+2s}(\R)$. Therefore we deduce that $u=u_1+u_2 \in \Cloc^{\gamma+2s}(\R)$.
\end{proof}

\begin{lem} \label{lem:decay}
Let $s \in (\frac 1 2, 1)$ and $\rho \in L^1(\R)$ with $\rho \geq 0$. Suppose $u \in L_s(\R)$ is an integral solution of $\Ds u = \rho$. Then, for any $\eps \in (0,1)$, there exists $R > 0$ such that
$$
u(x) \leq -c_\alpha (1-\eps) |x|^{2 \alpha} \left ( \int_\R \rho(y) \, dy \right ) + A \quad \mbox{for} \quad  |x| \geq R
$$
with some constant $A \in \R$.
\end{lem}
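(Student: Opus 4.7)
Since $u$ is an integral solution of $\Ds u = \rho$ with $\rho \in L^1(\R)$, Proposition \ref{prop:integral_form} (applied with $B=0$) gives the explicit representation
$$
u(x) = -c_\alpha \int_\R \bigl( |x-y|^{2\alpha} - |y|^{2\alpha} \bigr)\, \rho(y)\, dy + A_0
$$
for some $A_0 \in \R$. The task therefore reduces to proving the matching lower bound
$$
I(x) := \int_\R \bigl(|x-y|^{2\alpha} - |y|^{2\alpha}\bigr)\rho(y)\, dy \;\geq\; (1-\eps)\,|x|^{2\alpha}\,\|\rho\|_{L^1} - C
$$
for all $|x| \geq R$, with some $C = C(\eps,\rho) \geq 0$ to be absorbed into the final constant $A = A_0 + c_\alpha C$. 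The case $\rho \equiv 0$ is trivial, so we may assume $\|\rho\|_{L^1} > 0$.

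The plan is a standard split into a core and a tail region. Given $\eps \in (0,1)$, I will first choose $M>0$ so large that
$$
\int_{|y|>M} \rho(y)\, dy \;<\; \tfrac{\eps}{4}\,\|\rho\|_{L^1},
$$
which is possible because $\rho \in L^1(\R)$. Then $I(x) = I_1(x) + I_2(x)$, where $I_1$ integrates over $\{|y| \leq M\}$ and $I_2$ over $\{|y|>M\}$. For $I_1$, the elementary estimate $|x-y| \geq |x|-M$ (valid for $|x|\geq M$) yields $|x-y|^{2\alpha} \geq (|x|-M)^{2\alpha}$, and since $2\alpha \in (0,1)$ we have $(|x|-M)^{2\alpha}/|x|^{2\alpha} \to 1$ as $|x| \to \infty$. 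Choosing $R$ sufficiently large we obtain $(|x|-M)^{2\alpha} \geq (1-\eps/4)|x|^{2\alpha}$ for $|x|\geq R$, and hence
$$
I_1(x) \;\geq\; (1-\tfrac{\eps}{4})\,|x|^{2\alpha}\!\int_{|y|\leq M} \!\rho(y)\, dy \;-\; M^{2\alpha}\,\|\rho\|_{L^1}.
$$
For $I_2$ the key tool is the subadditivity inequality $|a+b|^{2\alpha} \leq |a|^{2\alpha} + |b|^{2\alpha}$ (valid for $2\alpha \in (0,1)$), which applied with $a = x$, $b = y-x$ gives $|y|^{2\alpha} \leq |x|^{2\alpha} + |x-y|^{2\alpha}$, so $|x-y|^{2\alpha} - |y|^{2\alpha} \geq -|x|^{2\alpha}$. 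Therefore
$$
I_2(x) \;\geq\; -|x|^{2\alpha}\!\int_{|y|>M} \!\rho(y)\, dy \;\geq\; -\tfrac{\eps}{4}\,|x|^{2\alpha}\,\|\rho\|_{L^1}.
$$

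Summing the two bounds and using $\int_{|y|\leq M}\rho \geq (1-\tfrac{\eps}{4})\|\rho\|_{L^1}$, a short arithmetic shows that the bracketed factor multiplying $|x|^{2\alpha}$ is at least $(1-\eps)\|\rho\|_{L^1}$ after possibly enlarging $R$, which yields the desired estimate for $I(x)$; the leftover constant $M^{2\alpha}\|\rho\|_{L^1}$ is absorbed into $A := A_0 + c_\alpha M^{2\alpha}\|\rho\|_{L^1}$. There is no substantial obstacle here: the one point that needs care is the sign of $|x-y|^{2\alpha}-|y|^{2\alpha}$ in the tail region $|y|>|x|$, where it may be negative; this is precisely what the subadditivity inequality for the concave function $t\mapsto t^{2\alpha}$ controls, turning the tail into a small negative multiple of $|x|^{2\alpha}$ that the smallness of $\int_{|y|>M}\rho$ absorbs.
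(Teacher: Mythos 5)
Your proof is correct and follows essentially the same route as the paper: split the integral at a radius $M$ capturing most of the mass of $\rho$, bound the kernel from below by nearly $|x|^{2\alpha}$ on the core, and use the subadditivity bound $|x-y|^{2\alpha}-|y|^{2\alpha}\geq -|x|^{2\alpha}$ on the tail. The only cosmetic difference is that you absorb an extra $c_\alpha M^{2\alpha}\|\rho\|_{L^1}$ into the constant $A$, whereas the paper's core estimate (via the scaled variable $t=y/x$ with $|t|\leq\delta$) avoids this term; both are fine since $A$ is arbitrary.
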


\begin{proof}
Since $u \in L_s(\R)$ is an integral solution, we have 
$$
u(x) = -c_\alpha \int_{\R} (|x-y|^{2 \alpha} - |y|^{2 \alpha}) \rho(y) \,dy + A 
$$
with some constant $A \in \R$. Suppose that $\rho \not \equiv 0$, since otherwise the result follows trivially.

Let $\eps \in (0,1)$ be given and take $\delta > 0$ sufficiently small such that $|1-t|^{2 \alpha} - |t|^{2 \alpha} \geq 1-\frac{\eps}{2}$ for $|t| \leq \delta$. Furthermore, let $1 > \mu > 0$ be given and take $M> 0$ sufficiently large such that
$$
\int_{|y| > M} \rho(y) \, dy \leq \mu \| \rho \|_{L^1}
$$
Thus if we define $R:=\delta^{-1} M > 0$, we obtain $|x-y|^{2 \alpha} - |y|^{2 \alpha} \geq (1-\frac{\eps}{2}) |x|^{2 \alpha}$ for $|y| \leq M$ and $|x| \geq R$. Using that $\rho \geq 0$ and $|x-y|^{2\alpha} -|y|^{2 \alpha} \geq -|x|^{2 \alpha}$ for $\alpha \in (0, \frac{1}{2})$, we deduce
\begin{align*}
& \int_\R (|x-y|^{2 \alpha}- |y|^{2 \alpha}) \rho(y) \, dy \geq (1-\frac{\eps}{2}) |x|^{2 \alpha} \int_{|y| \leq M} \rho(y) \, dy - |x|^{2 \alpha} \int_{|y| > M} \rho(y) \, dy  \\
& \geq \left ( (1-\frac{\eps}{2}) (1-\mu) - \mu) \right ) |x|^{2 \alpha} \left ( \int_{\R} \rho(y) \,dy  \right ) \quad \mbox{for $|x| \geq R$}.
\end{align*}
Choosing now $\mu > 0$ sufficiently small such that $(1-\frac{\eps}{2})(1-\mu) - \mu \geq 1-\eps$, we obtain the claimed estimate by multiplying the above bound with $-c_\alpha < 0$.
\end{proof}

\begin{lem} \label{lem:symmetry}
Let $s \in (\frac{1}{2}, 1)$ and suppose $K \in C^1(\R)$ is even, non-negative and monotone-decreasing in $|x|$. Then any integral solution $u \in L_s(\R)$ of
$$
\Ds u = K e^u \quad \mbox{with} \quad \int_{\R} K e^u \, dx < +\infty
$$
is even and monotone-decreasing with respect to some point $x_0 \in \R$. Moreover, we have $x_0=0$ whenever $K \not \equiv \mathrm{const.}$
\end{lem}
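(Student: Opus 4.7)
The plan is to adapt the classical method of moving planes to the one-dimensional nonlocal setting, working at the level of the integral representation provided by Proposition~\ref{prop:integral_form}. Since $u$ is an integral solution, we may write
$$
u(x) = -c_\alpha \int_\R \bigl(|x-y|^{2\alpha} - |y|^{2\alpha}\bigr) K(y) \eu^{u(y)} \, dy + A
$$
for some $A \in \R$. For each $\lambda \in \R$, setting $u_\lambda(x) := u(2\lambda-x)$, $w_\lambda := u - u_\lambda$, and $\Sigma_\lambda := (\lambda, \infty)$, a substitution $y \mapsto 2\lambda-y$ in the integral restricted to $\{y \leq \lambda\}$ yields the symmetric identity
$$
w_\lambda(x) = c_\alpha \int_{\Sigma_\lambda} \bigl(|x+y-2\lambda|^{2\alpha} - |x-y|^{2\alpha}\bigr) \bigl(K(y) \eu^{u(y)} - K(2\lambda-y) \eu^{u_\lambda(y)}\bigr) dy .
$$
The kernel $\mathcal{K}_\lambda(x,y) := |x+y-2\lambda|^{2\alpha} - |x-y|^{2\alpha}$ is strictly positive on $\Sigma_\lambda \times \Sigma_\lambda$, and for $y > \lambda \geq 0$ one checks $|y| \geq |2\lambda - y|$, so the monotonicity of $K$ in $|x|$ gives $K(y) \leq K(2\lambda - y)$.

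The first step is to start the moving plane from the right: the decay estimate in Lemma~\ref{lem:decay} implies $u(x) - u_\lambda(x) \sim -c_\alpha \|K\eu^u\|_{L^1}(|x|^{2\alpha} - |2\lambda-x|^{2\alpha}) < 0$ as $x \to +\infty$ with $x > \lambda$, so $w_\lambda \leq 0$ on $\Sigma_\lambda$ for all $\lambda$ sufficiently large. I then set
$$
\lambda^* := \inf \{ \lambda_0 \in \R \mid w_\lambda \leq 0 \text{ on } \Sigma_\lambda \text{ for every } \lambda \geq \lambda_0 \} \in \R
$$
and aim to show $w_{\lambda^*} \equiv 0$. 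On the hypothetical bad set $\Sigma_\lambda^- := \{x \in \Sigma_\lambda : w_\lambda(x) > 0\}$ I would combine the convex bound $\eu^{u} - \eu^{u_\lambda} \leq w_\lambda \eu^u$ with $K(y) \leq K(2\lambda-y)$ to obtain
$$
0 < w_\lambda(x) \leq c_\alpha \int_{\Sigma_\lambda^-} \mathcal{K}_\lambda(x,y) \, K(y) \eu^{u(y)} w_\lambda(y) \, dy \quad \text{for } x \in \Sigma_\lambda^- ,
$$
and close the argument by a narrow-region / contraction step: since $K\eu^u \in L^1(\R)$ and $\eu^u$ decays outside a compact set by Lemma~\ref{lem:decay}, the associated integral operator has operator norm strictly less than $1$ on a suitable space when the relevant region is thin enough, forcing $w_\lambda \equiv 0$ on $\Sigma_\lambda^-$ and hence $\Sigma_\lambda^- = \emptyset$. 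This yields symmetry of $u$ about $x_0 := \lambda^*$; strict monotonicity of $u$ in $|x - x_0|$ follows from the strict inequality $w_\lambda < 0$ on $\Sigma_\lambda$ for every $\lambda > \lambda^*$ (apply it with $\lambda = (r+s)/2$, $r > s \geq 0$).

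For the last clause, if $K \not\equiv \mathrm{const}$ then symmetry of $u$ about $\lambda^*$ combined with the equation $\Ds u = K \eu^u$ and the reflection-invariance of $\Ds$ forces $K(2\lambda^* - x) = K(x)$ for all $x$; coupled with $K(-x) = K(x)$ this implies $K$ is $2\lambda^*$-periodic, which together with the monotonicity of $K$ in $|x|$ makes $K$ constant unless $\lambda^* = 0$. Hence $x_0 = 0$. The main obstacle is the sliding step: carrying out the narrow-region / contraction argument rigorously at the critical plane $\lambda^*$ in the one-dimensional nonlocal context, balancing the integrable decay of $K\eu^u$ from Lemma~\ref{lem:decay} against the precise positivity of the reflection kernel $\mathcal{K}_\lambda$.
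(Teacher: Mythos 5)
Your overall strategy (moving planes run directly on the integral representation, with the convexity bound $e^{u}-e^{u_\lambda}\le e^{u}(u-u_\lambda)$ and a contraction estimate exploiting the decay of $Ke^{u}$) is exactly the paper's strategy in Appendix \ref{sec:moving_plane}, but there is a genuine gap at the critical plane. Your key sign $K(y)\le K(2\lambda-y)$ for $y\in\Sigma_\lambda$ uses $|2\lambda-y|\le|y|$, which is only true for $\lambda\ge 0$; for $\lambda<0$ the inequality reverses (e.g.\ $y=0$ gives $K(0)\ge K(2\lambda)$). Hence the moving-plane machinery is blocked at $\lambda=0$ whenever $K\not\equiv\mathrm{const}$, and your definition of $\lambda^*$ as an infimum over all of $\R$, together with the claim that one can ``show $w_{\lambda^*}\equiv 0$,'' does not go through in the main case: the standard route to $w_{\lambda^*}\equiv 0$ is ``if not identically zero, then strict inequality, then slide the plane strictly past $\lambda^*$, contradicting minimality,'' and when $\lambda^*=0$ the sliding is prevented by $K$ itself, not by a failure of strictness, so no contradiction arises. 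Moreover, your narrow-region/contraction step only yields $\Sigma_{\lambda^*}^-=\emptyset$, i.e.\ $w_{\lambda^*}\le 0$, which is weaker than the symmetry $w_{\lambda^*}\equiv 0$. The paper resolves this by restricting the plane to $\lambda>0$ (Propositions \ref{prop:movingplane1} and \ref{prop:movingplane2}), and when the critical value is $\lambda_*=0$ it reruns the whole argument for the reflected function $\tilde u(x)=u(-x)$; evenness about $0$ (or symmetry about some $\pm\lambda_*$) then follows by combining the two one-sided inequalities. This reflection step is missing from your proposal, and without it the evenness of $u$ in the case $K\not\equiv\mathrm{const}$ (precisely the case where the lemma asserts $x_0=0$) is not established. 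For $\lambda^*>0$ the upgrade to $u\equiv u_{\lambda^*}$ also needs the strong-maximum-principle dichotomy of Proposition \ref{prop:movingplane2} (strict inequality on the open half-line via the pointwise form of $\Ds$, then shrinking of the bad set), which your ``narrow region'' gesture hints at but does not supply.

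A second, more minor, issue is the start of the plane: the asymptotic comparison $u(x)\sim-c_\alpha\|Ke^u\|_{L^1}|x|^{2\alpha}$ versus $u_\lambda(x)\sim-c_\alpha\|Ke^u\|_{L^1}|2\lambda-x|^{2\alpha}$ only controls $w_\lambda$ for $x$ large, where the difference $|x|^{2\alpha}-|2\lambda-x|^{2\alpha}$ may be smaller than the error terms near $x\approx\lambda$; it does not give $w_\lambda\le 0$ on all of $\Sigma_\lambda$. The paper's Proposition \ref{prop:movingplane1} instead proves this by a weighted $L^1$ absorption estimate,
$$
\bigl\| \langle x\rangle^{-2}(u-u_\lambda)\bigr\|_{L^1(\Sigma_\lambda^u)}\le C(\alpha)\,\sup_{y\ge\lambda}\Bigl((1+|y|^{2\alpha})\langle y\rangle^{2}K(y)e^{u(y)}\Bigr)\,\bigl\|\langle x\rangle^{-2}(u-u_\lambda)\bigr\|_{L^1(\Sigma_\lambda^u)},
$$
where smallness comes from the decay of $Ke^u$ on $[\lambda,\infty)$ (Lemma \ref{lem:decay}), not from thinness of a region; your contraction idea can be adapted to this, but as written the initial step is incomplete. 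Your closing argument ($u$ symmetric about $\lambda^*$ forces $K(2\lambda^*-x)=K(x)$, hence $K$ is $2\lambda^*$-periodic and, being even and monotone in $|x|$, constant unless $\lambda^*=0$) is correct and is a slightly cleaner packaging of the paper's iteration, but it only kicks in once symmetry about $\lambda^*$ has been legitimately established, which is exactly where the gap lies.
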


\begin{proof}
We assume that $K \not \equiv 0$, since otherwise $u$ is an integral solution of $\Ds u = 0$ and hence $u(x) \equiv A$ for some constant $A \in \R$ and the result trivially follows.
  
Now let $\rho := K e^u \in L^1(\R)$. Since $\rho \geq 0$ with $\rho \not \equiv 0$, we deduce from Lemma \ref{lem:decay} that $u(x) \leq -a |x|^{2 \alpha} + b$ for $|x| \geq R$ with some constants $a>0$ and $b \in \R$ and some radius $R >0$. Thus $0\leq \rho(x) = K(x) e^{u(x)} \leq K(0)  e^{-a |x|^{2 \alpha}+b}$ for $|x| \geq R$ and we easily deduce that $|x|^{m} \rho \in L^1(\R)$ for any $m \geq 0$. If we choose $m=2 \alpha$, we deduce that
$$
u(x) = -c_\alpha \int_\R |x-y|^{2 \alpha} K(y) e^{u(y)} \, dy + C
$$
with some constant $C \in \R$. By applying the moving plane method detailed in Appendix \ref{sec:moving_plane} below, we finish the proof.
\end{proof}

\subsection{Proofs of Theorem \ref{thm:reg_sym}}
Let $s \in (\frac{1}{2},1)$ and suppose that $u \in L_s(\R)$ solves $\Ds u = e^u$ in $\R$ with $\int_\R e^u \, dx < +\infty$. From Proposition \ref{prop:good_decay} we deduce that $u$ is an integral solution, i.\,e., 
$$
u(x) = -c_\alpha \int_{\R} (|x-y|^{2 \alpha} - |y|^{2 \alpha}) e^{u(y)} \, dy + A
$$
with some constant $A \in \R$, whence in particular we have $u(x) = O(|x|^{2 \alpha})$ as $|x| \to +\infty$. Using the elementary estimate $| |x-y|^{2\alpha}-|z-y|^{2 \alpha}| \leq |x-z|^{2 \alpha}$ for $x,y,z \in \R$ and $2 \alpha \in (0,1)$, we obtain the global H\"older estimate
$$
|u(x)-u(z)|^{2 \alpha} \leq c_\alpha \| e^u \|_{L^1} |x-z|^{2 \alpha} \quad \mbox{for all $x,z \in \R$}.
$$
In particular, we see that $u \in \Cloc^{\gamma}(\R)$ with $\gamma = 2 \alpha = 2s -1 > 0$ and thus $e^{u} \in \Cloc^{\gamma}(\R)$. Applying Lemma \ref{lem:regularity}, we conclude that $u \in \Cloc^{\gamma + 2s}(\R)$. By iterating this argument, we find that $u \in \Cloc^{\gamma + k \cdot 2s}(\R)$ for all $k \in \N$, whence it follows that $u \in C^\infty(\R)$ is smooth. 

Next, by Lemma \ref{lem:decay}, we find for any $\eps \in (0,1)$ a radius $R > 0$ such that
$$
u(x) \leq -c_\alpha(1-\eps) |x|^{2 \alpha} \| \rho \|_{L^1} + A \quad \mbox{for $|x| \geq R$}.
$$
By this bound and the continuity of $u$, we deduce that $u(x) \leq M$ for all $x \in \R$ with some constant $M \in \R$. Thus $u$ is bounded from above, i.\,e., we have $u_+ = \max \{ u,0 \} \in L^\infty(\R)$. 

Next, we apply the moving plane method in Lemma \ref{lem:symmetry} to deduce that
$$
u(x) = u_0(|x-x_0|) 
$$
with some even function $u_0 : \R \to \R$ satisfying the monotonicity property $u(r) \leq u(s)$ for $r \geq s \geq 0$. We claim that $u_0(r)$ is indeed strictly monotone-decreasing. To see this, we note that $u_0 \in L_s(\R)$ is also an integral solution of the problem $\Ds u = e^u$ in $\R$ with $e^u \in L^1(\R)$. Since $u_0$ is bounded from above (by $u_0(0)$ due to monotonicity), we see that $\rho := e^{u_0} \in L^1(\R) \cap L^\infty(\R)$. From Proposition  \ref{prop:riesz_integral}, we obtain that
$$
\pt_x u_0 = -H_\alpha(\rho_0) .
$$
Finally, since $\rho_0 = \rho_0^*$ is symmetric-decreasing and not identically zero, we find that $H_\alpha(\rho_0)(x) > 0$ for all $x > 0$ by Lemma \ref{lem:H_a}. Thus $\pt_x u_0(x) < 0$ for all $x>0$, which implies that $u_0$ is strictly monotone-decreasing. 

This finishes the proof of Theorem \ref{thm:reg_sym}. \hfill $\qed$
%%%%%%%%%%%%%%%%%%%%%%%%%%%%%%%%%%%
\section{Setting up the Fixed Point Scheme} \label{sec:setup}

The goal of this section is to setup a fixed point scheme associated to \eqref{eq:Gelf} in terms of the function $v = \sqrt{K e^u}$. Throughout the following discussion, we fix $s \in (\frac{1}{2},1)$ and we set $\alpha = s - \frac{1}{2} \in (0, \frac{1}{2})$. We assume that the given function $K: \R \to \R_{>0}$ satisfies Assumption \textbf{(A)} above. We remind the reader that we use the notation $a \lesssim b$ to mean that $a \leq C b$ with some constant $C>0$, which may change from line to line but it only depends on $s$ (or equivalently on $\alpha$) and the function $K$. 

\subsection{Reformulation of the Problem}

We introduce the (real) Banach space
\be
X_\alpha := \{ v \in L^2(\R, |x|^{2 \alpha} \, dx) \cap L^\infty(\R) \mid \mbox{$v(x) = v(-x)$ for a.\,e.~$x \in \R$} \}
\ee
endowed the norm $\| v \|_{X^\alpha} := \| v \|_{L^2} + \| |x|^{\alpha} v \|_{L^2} + \| v \|_{L^\infty}$. Note the continuous embedding $L^2(\R, |x|^{2 \alpha} \, dx) \cap L^\infty(\R) \subset L^2(\R)$ due to the elementary estimate
$$
\| v \|_{L^2} = \| \mathds{1}_{|x| \leq 1} v \|_{L^2} + \| \mathds{1}_{|x| > 1} v \|_{L^2} \lesssim \| v \|_{L^\infty} + \| |x|^{\alpha} v \|_{L^2}.
$$
However, it will turn out to be convenient to include the term $\| v \|_{L^2}$ in our definition of the norm $\| v \|_{X^\alpha}$ henceforth.

\begin{lem} \label{lem:reformulation}
An even function $u \in L_s(\R)$ with $K e^u \in L^1(\R)$ solves the integral equation
$$
u(x) = -c_\alpha \int_\R (|x-y|^{2 \alpha} - |y|^{2 \alpha}) K(y) e^{u(y)} \, dy + u(0)
$$
with some $u(0) \in \R$ if and only if $v = \sqrt{K e^u}>0$ belongs to $X_\alpha \cap C^1$ and solves
\be \label{eq:v}
\pt_x v = \frac{1}{2} (\pt_x \log K) v - \frac{1}{2} H_\alpha(v^2) v
\ee
and $v(0) = \sqrt{K(0) e^{u(0)}}$.
\end{lem}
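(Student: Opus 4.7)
The proof is a direct translation between the integral equation for $u$ and the first-order ODE for $v = \sqrt{K e^u}$. The bridge is Proposition~\ref{prop:riesz_integral}, which identifies the primitive $-\int_0^x H_\alpha(\rho)(y)\,dy$ with the kernel expression $-c_\alpha \int_\R (|x-y|^{2\alpha}-|y|^{2\alpha}) \rho(y)\,dy$ whenever $\rho \in L^1(\R) \cap L^\infty(\R)$. Thus the plan for each direction is to verify the $L^1 \cap L^\infty$ hypothesis on $\rho = v^2 = K e^u$, differentiate or integrate the corresponding identity, and pass between $u$ and $v$ via logarithmic differentiation.

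For the forward direction, start from the integral equation with $\rho := K e^u \in L^1(\R)$. The pointwise estimate $\bigl||x-y|^{2\alpha} - |z-y|^{2\alpha}\bigr| \leq |x-z|^{2\alpha}$ gives an a~priori H\"older bound of order $2\alpha$ on $u$; in particular $u$ is continuous. Lemma~\ref{lem:decay} then yields an upper bound $u(x) \leq -c_\alpha (1-\eps) \|\rho\|_{L^1} |x|^{2\alpha} + A$ for $|x| \geq R$, which combined with continuity on the bounded region $|x| \leq R$ shows $u$ is bounded above on all of $\R$, so $\rho \in L^\infty(\R)$. Proposition~\ref{prop:riesz_integral} now recasts the integral equation as $u(x) = u(0) - \int_0^x H_\alpha(\rho)(y)\,dy$, and since $H_\alpha(\rho)$ is continuous by Lemma~\ref{lem:H_a}, this yields $u \in C^1$ with $\pt_x u = -H_\alpha(\rho)$. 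Since $K \in C^1$ is strictly positive, $v = \sqrt{K e^u} \in C^1$ is strictly positive, and logarithmic differentiation gives
\[
\frac{\pt_x v}{v} = \frac{1}{2} \pt_x \log K + \frac{1}{2} \pt_x u = \frac{1}{2} \pt_x \log K - \frac{1}{2} H_\alpha(v^2),
\]
which is the claimed ODE. Membership $v \in X_\alpha$ then follows routinely: evenness is inherited from $u$ and $K$; $\|v\|_{L^2}^2 = \|\rho\|_{L^1}$; $\|v\|_{L^\infty}^2 \leq \|K\|_{L^\infty} \|e^u\|_{L^\infty}$; and $\||x|^\alpha v\|_{L^2}^2 = \int_\R |x|^{2\alpha} \rho(x)\,dx$ is finite thanks to the $-|x|^{2\alpha}$ asymptotic upper bound on $u$. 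The identity $v(0) = \sqrt{K(0) e^{u(0)}}$ is immediate.

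For the reverse direction, assume $v = \sqrt{K e^u} \in X_\alpha \cap C^1$ satisfies $v > 0$ and the ODE. Then $K e^u = v^2 \in L^1(\R) \cap L^\infty(\R)$ because $v \in L^2 \cap L^\infty$, and since $K \in C^1$ is positive one has $u = \log(v^2/K) \in C^1$. Rearranging the ODE gives $\pt_x u = -H_\alpha(v^2) = -H_\alpha(K e^u)$. Integrating from $0$ to $x$ and invoking Proposition~\ref{prop:riesz_integral} converts this identity into the desired integral equation, with the additive constant identified as $u(0) = \log(v(0)^2/K(0))$.

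The one substantive step is the upper bound on $u$ needed in the forward direction: the integral equation does not make this bound transparent, and it must be extracted from Lemma~\ref{lem:decay} after first establishing continuity of $u$ via the H\"older estimate. Once $\rho = K e^u \in L^1 \cap L^\infty$ is secured, both implications reduce to an application of Proposition~\ref{prop:riesz_integral} combined with logarithmic differentiation.
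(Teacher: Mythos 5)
Your proof is correct and follows essentially the same route as the paper: the H\"older bound and Lemma \ref{lem:decay} to secure $\rho = K e^u \in L^1(\R) \cap L^\infty(\R)$, Proposition \ref{prop:riesz_integral} together with Lemma \ref{lem:H_a} to pass between the kernel representation and $-\int_0^x H_\alpha(\rho)$, and logarithmic differentiation to translate between $u$ and $v$. The only (harmless) deviation is that in the reverse direction you skip the paper's appeal to Lemma \ref{lem:ode}, instead dividing by $v$ directly, which is legitimate since the stated hypothesis $v=\sqrt{Ke^u}>0$ already guarantees strict positivity.
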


\begin{proof} We divide the proof as follows.

\medskip
\textbf{Step 1.} Suppose that $u \in L_s(\R)$ is an even function with $K e^u \in L^1(\R)$ that solves the integral equation. From the decay estimate in Lemma \ref{lem:decay}, we find that $u(x) \leq -a |x|^{2 \alpha} + b$ for $|x| \geq R$ with some constants $a > 0$, $b \in \R$ and $R > 0$. Since $u$ is continuous, this implies that $|e^{u(x)}| \leq C e^{-a |x|^{2 \alpha}}$ for all $x \in \R$ with some constant $C>0$. Since $K(x) > 0$ is bounded,  the even function $v := \sqrt{K e^{u}}>0$ belongs to $X_\alpha$. Furthermore, we see that $v$ is $C^1$, since $K > 0$  and $e^{u}$ are both of class $C^1$. From Proposition \ref{prop:riesz_integral}, we deduce that
$$
\pt_x u = -H_\alpha(K e^u) .
$$
Since $v^2 = K e^u$ and thus $\pt_x u = \pt_x \log(K^{-1} v^2) = 2 \frac{\pt_x v}{v} - \pt_x \log K$, we readily obtain the claimed equation satisfied by $v$. Clearly, we have that $v(0) = \sqrt{K(0) e^{u(0)}}$ holds.

\medskip
\textbf{Step 2.} Let $v \in X_\alpha \cap C^1$ be a solution of stated equation with $v(0) > 0$. By Lemma \ref{lem:ode} below, we have that
$$
v(x) = \lambda \sqrt{K(x)} e^{-\frac{1}{2} \int_0^x H_\alpha(v^2)(y) \, dy}
$$
with the constant $\lambda = v(0)/\sqrt{K(0)} > 0$. Thus we see that $v(x) > 0$ for all $x \in \R$. Hence we can define the even $C^1$-function $u := \log(K^{-1} v^2)$, which is seen to satisfy $\pt_x u = -H_\alpha(K e^u)$. Hence
$$
u(x) = -\int_0^x H_\alpha(K e^u)(y) \, dy + u(0)
$$
with $u(0) = \log(K^{-1}(0) v(0)^2)$ which is the same as $v(0) = \sqrt{K(0) e^{u(0)}}$. Since $K e^u = v^2 \in L^1(\R) \cap L^\infty(\R)$, we can apply Proposition \ref{prop:riesz_integral} to deduce that $u$ satisfies the integral equation. Finally, from the integral equation, we easily see that $|u(x)| \leq c_\alpha |x|^{2 \alpha} \| v \|_{L^2}^2 + |u(0)|$ for $x \in \R$, which  implies  $u \in L_s(\R)$.
\end{proof}

\begin{lem} \label{lem:ode}
Let $v \in X_\alpha$. Then $v$ is $C^1$ and solves \eqref{eq:v} if and only if
$$
v(x) = \lambda \sqrt{K(x)} e^{-\frac{1}{2} \int_0^x H_\alpha(v^2)(y) \, dy} 
$$
with some constant $\lambda$.
\end{lem}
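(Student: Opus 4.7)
The plan is to recognize that \eqref{eq:v}, once we fix $v \in X_\alpha$, is simply a first-order linear ODE in $v$ with continuous coefficients, whose unique solution is given by the exponential formula. The two directions are then a matter of integrating (resp.\ differentiating) this linear ODE.

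For the forward direction, assume $v \in X_\alpha$ is $C^1$ and solves \eqref{eq:v}. Since $v \in L^2(\R) \cap L^\infty(\R)$, we have $v^2 \in L^1(\R) \cap L^\infty(\R)$, so by Lemma \ref{lem:H_a} the function $x \mapsto H_\alpha(v^2)(x)$ is continuous on $\R$. Moreover, since $K \in C^1(\R)$ with $K > 0$, the function $\pt_x \log K = \pt_x K/K$ is continuous. Rewrite \eqref{eq:v} as the scalar linear ODE
$$
\pt_x v(x) = a(x)\, v(x), \qquad a(x) := \tfrac{1}{2}\pt_x \log K(x) - \tfrac{1}{2} H_\alpha(v^2)(x),
$$
with continuous coefficient $a \in C^0(\R)$. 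Solving explicitly yields $v(x) = v(0) \exp\bigl(\int_0^x a(y)\,dy\bigr)$, and evaluating $\int_0^x \tfrac{1}{2}\pt_y \log K(y)\,dy = \tfrac{1}{2}\log(K(x)/K(0))$ gives
$$
v(x) = \frac{v(0)}{\sqrt{K(0)}}\, \sqrt{K(x)}\, \eu^{-\frac{1}{2}\int_0^x H_\alpha(v^2)(y)\,dy},
$$
which is the asserted formula with $\lambda = v(0)/\sqrt{K(0)}$.

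For the converse, suppose $v \in X_\alpha$ is given by the stated formula. As above, $v^2 \in L^1(\R) \cap L^\infty(\R)$, so $H_\alpha(v^2)$ is continuous by Lemma \ref{lem:H_a}, and $\sqrt{K} \in C^1(\R)$ because $K$ is $C^1$ and strictly positive. Therefore both factors of the product in the defining formula are $C^1$ on $\R$, and hence $v \in C^1(\R)$. Differentiating and using $\pt_x \sqrt{K}/\sqrt{K} = \tfrac{1}{2}\pt_x \log K$, we compute
$$
\pt_x v(x) = \frac{\pt_x \sqrt{K}(x)}{\sqrt{K}(x)}\, v(x) - \tfrac{1}{2} H_\alpha(v^2)(x)\, v(x) = \tfrac{1}{2}(\pt_x \log K)(x)\, v(x) - \tfrac{1}{2} H_\alpha(v^2)(x)\, v(x),
$$
which is exactly \eqref{eq:v}.

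There is no serious obstacle here: the statement is a routine use of the integrating-factor method for a linear first-order ODE. The only point requiring care is justifying that the coefficient $a(x)$ is continuous, which hinges on invoking the continuity part of Lemma \ref{lem:H_a} (valid because $v^2 \in L^1 \cap L^\infty$) together with the $C^1$-hypothesis on $K$ in Assumption \textbf{(A)}.
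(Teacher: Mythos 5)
Your proof is correct and follows essentially the same route as the paper: the converse direction rests on the continuity of $H_\alpha(v^2)$ for $v^2 \in L^1 \cap L^\infty$ (Lemma \ref{lem:H_a}, which the paper accesses via Proposition \ref{prop:T_bounds}) plus differentiation, and the forward direction is the same integrating-factor argument the paper phrases as $\pt_x(e^{-f}v)=0$ with $f = \tfrac12\log K - \tfrac12\int_0^x H_\alpha(v^2)$. No gaps.
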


\begin{proof}
Let $v \in X_\alpha$. Suppose that $v$ is given by the right-hand side of the stated equation. From Proposition \ref{prop:T_bounds} below, we find that $v$ is $C^1$ and differentiation yields that $v$ solves \eqref{eq:v}.

Assume now that $v \in X_\alpha \cap C^1$  solves \eqref{eq:v}. Using the $C^1$-function $f(x)= \frac{1}{2} \log K(x)- \frac{1}{2} \int_0^x H_\alpha(v^2)(y) \, dy$, an elementary calculation shows that $\pt_x (e^{-f} v) = 0$. Thus $e^{-f} v = \lambda$ with some constant $\lambda$ and we infer that $v$ is given by the asserted formula.
\end{proof}

\subsection{Fixed Point Setup and Compactness Properties}
In view of Lemmas \ref{lem:reformulation} and \ref{lem:ode}, we setup the following fixed point scheme. For the parameter $\lambda > 0$, we define the nonlinear map
\be \label{def:Ts}
\boxed{\Ts_\lambda[v](x) := \lambda \sqrt{K(x)} e^{-\frac{1}{2} \int_0^x H_\alpha(v^2)(y) \, dy} \quad \mbox{for $v \in X_\alpha$}}
\ee
where we recall that $H_\alpha = H \circ (-\Delta)^{-\alpha}$ denotes the conjugate Riesz potential operator; see Section \ref{sec:prelim} for elementary mapping properties. For later use, we introduce the following extension of $\Ts_\lambda$ given by
\be
\boxed{\Tss[v](x) := e^{-\frac{1}{2} \sigma^2 x^2} \Ts_\lambda[v](x)}
\ee
with the parameter $\sigma \in \R$, which is equivalent to replacing $K(x)$ with $e^{-\sigma^2 x^2} K(x)$ in the definition of $\Ts_\lambda$ above. In what follows, we shall tacitly assume that $\lambda >0$ and $\sigma \in \R$ holds, unless we explicitly state that $\sigma \neq 0$.

\begin{prop} \label{prop:T_bounds}
For all $v \in X_\alpha$, it holds
$$
0 < \Tss[v](x) \lesssim \lambda e^{-\frac{1}{2} \sigma^2 x^2} e^{-d_\alpha \| v \|_{L^2}^2 |x|^{2 \alpha} + 2 d_\alpha \| |x|^{\alpha} v \|_{L^2}^2} \quad \mbox{for $x \in \R$}
$$
with some constant $d_\alpha > 0$ depending only on $\alpha$.  Moreover, the function $x \mapsto \Tss[v](x)$ is even and $C^1$ with the pointwise bound
$$
|\pt_x \Tss[v](x)| \lesssim \lambda e^{-\frac{1}{2} \sigma^2 x^2} e^{-d_\alpha \| v \|_{L^2}^2 |x|^{2 \alpha} + 2 d_\alpha \| |x|^{\alpha} v \|_{L^2}^2} \left ( \sigma^2 |x| + \| v \|_{X^\alpha}^2 + 1 \right ) \quad \mbox{for $x \in \R$}.
$$
\end{prop}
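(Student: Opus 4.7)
The exponential factor in the definition of $\Tss$ is $e^{-\frac{1}{2}\sigma^2 x^2 - \frac{1}{2}\int_0^x H_\alpha(v^2)(y)\,dy}$, so the whole scheme rests on controlling $\int_0^x H_\alpha(v^2)(y)\,dy$ from below. The first step is to invoke Proposition \ref{prop:riesz_integral} with $\rho = v^2 \in L^1(\R) \cap L^\infty(\R)$ (which holds because $v \in X_\alpha$) to rewrite
$$
-\frac{1}{2}\int_0^x H_\alpha(v^2)(y)\,dy = -\frac{c_\alpha}{2}\int_\R \big(|x-y|^{2\alpha} - |y|^{2\alpha}\big) v^2(y)\,dy.
$$
Since $v$ is even, I would symmetrize in $y\mapsto -y$ to rewrite the right-hand side as $-\frac{c_\alpha}{4}\int_\R g(y) v^2(y)\,dy$ with $g(y) := |x-y|^{2\alpha} + |x+y|^{2\alpha} - 2|y|^{2\alpha}$.

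The main technical step is then to pointwise lower bound $g$. Using the elementary concavity/subadditivity facts for $t\mapsto t^{2\alpha}$ on $[0,\infty)$ with $2\alpha \in (0,1)$, namely the reverse triangle inequality $(a-b)^{2\alpha} \geq a^{2\alpha} - b^{2\alpha}$ for $a\geq b\geq 0$ and the monotonicity $(a+b)^{2\alpha} \geq a^{2\alpha}$, I would split into the two regimes $|y|\leq |x|$ and $|y|>|x|$ and obtain
$$
g(y) \geq (2|x|^{2\alpha} - 3|y|^{2\alpha})\mathds{1}_{|y|\leq|x|} - |x|^{2\alpha}\mathds{1}_{|y|>|x|}.
$$
Integrating against $v^2$ and applying Chebyshev's inequality in the form $\int_{|y|>|x|} v^2\,dy \leq |x|^{-2\alpha}\||y|^\alpha v\|_{L^2}^2$ then yields
$$
\int_\R(|x-y|^{2\alpha} - |y|^{2\alpha})v^2(y)\,dy \;\geq\; |x|^{2\alpha}\|v\|_{L^2}^2 - C \||y|^\alpha v\|_{L^2}^2
$$
for a constant $C > 0$. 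Setting $d_\alpha$ proportional to $c_\alpha$ and combining with the bound $\sqrt{K(x)} \lesssim 1$ from Assumption~(A), this immediately produces the claimed pointwise upper bound on $\Tss[v](x)$; positivity is obvious from the exponential form. I expect the concavity-based pointwise lower bound on $g$ to be the most delicate step, since both the signs and the scales need to be traced carefully across the two regimes.

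For evenness, $K$ is even by Assumption~(A), the Gaussian factor is even, and Lemma \ref{lem:H_a} shows that $H_\alpha(v^2)$ is odd (as $v^2$ is even), so the primitive $\int_0^x H_\alpha(v^2)\,dy$ is even in $x$. For the $C^1$-statement, Lemma \ref{lem:H_a} further gives continuity of $H_\alpha(v^2)$ on $\R$, so $x\mapsto \int_0^x H_\alpha(v^2)(y)\,dy$ is $C^1$ with derivative $H_\alpha(v^2)(x)$; together with $\sqrt{K} \in C^1$ (using $K > 0$ and $K \in C^1$) and smoothness of the Gaussian factor, this gives $\Tss[v] \in C^1$. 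The explicit derivative is
$$
\pt_x \Tss[v](x) = \Tss[v](x)\Big(-\sigma^2 x + \frac{\pt_x \sqrt{K(x)}}{\sqrt{K(x)}} - \tfrac{1}{2} H_\alpha(v^2)(x)\Big),
$$
but to avoid the factor $1/\sqrt{K}$ (which is not uniformly bounded when $K$ decays at infinity), I would expand via the product rule so that the numerator contains $\pt_x \sqrt{K}$ (bounded by Assumption (A)) together with the factor $\sqrt{K}$ (bounded as $K\in L^\infty$) multiplying $\sigma^2 x$ and $H_\alpha(v^2)(x)$. The derivative bound then follows by combining the already-established pointwise bound on $\Tss[v]$ with the $L^\infty$-estimate $\|H_\alpha(v^2)\|_{L^\infty} \lesssim \|v^2\|_{L^1\cap L^\infty} \lesssim \|v\|_{X^\alpha}^2$ from Lemma~\ref{lem:H_a}.
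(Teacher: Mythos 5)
Your proposal is correct in substance and follows essentially the same strategy as the paper: rewrite the exponent $-\tfrac12\int_0^x H_\alpha(v^2)(y)\,dy$ via Proposition \ref{prop:riesz_integral} as $-\tfrac{c_\alpha}{2}\int_\R(|x-y|^{2\alpha}-|y|^{2\alpha})v(y)^2\,dy$, bound this quantity, and obtain the derivative estimate by the product rule so that $\pt_x\sqrt{K}\in L^\infty$ appears separately from the factor $\sqrt{K}$ --- your remark about avoiding $1/\sqrt{K}$ is exactly how the paper writes the derivative, and your treatment of positivity, evenness (oddness of $H_\alpha(v^2)$), the $C^1$ property, and the bound $\|H_\alpha(v^2)\|_{L^\infty}\lesssim\|v\|_{X^\alpha}^2$ all match. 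The one place you diverge is the kernel estimate, and there your route is more roundabout and quantitatively weaker: the paper simply uses subadditivity of $t\mapsto t^{2\alpha}$, namely $|x|^{2\alpha}\le|x-y|^{2\alpha}+|y|^{2\alpha}$, so that $|x-y|^{2\alpha}-|y|^{2\alpha}\ge|x|^{2\alpha}-2|y|^{2\alpha}$ pointwise in $y$ (no evenness of $v$, no case distinction), which gives the stated exponent $-d_\alpha\|v\|_{L^2}^2|x|^{2\alpha}+2d_\alpha\||x|^\alpha v\|_{L^2}^2$ in one line. Your symmetrization, two-regime split and Chebyshev step are all correct (I checked both regimes of your lower bound on $g$), but after undoing the symmetrization they yield $\int_\R(|x-y|^{2\alpha}-|y|^{2\alpha})v^2\,dy\ge|x|^{2\alpha}\|v\|_{L^2}^2-3\||y|^\alpha v\|_{L^2}^2$, i.e.\ your unspecified constant is $C=3$, not $2$; since the statement fixes the ratio $1:2$ with the \emph{same} $d_\alpha$ in both terms, your bound does not literally imply it (one cannot choose a smaller $d_\alpha'$ making $3d_\alpha\le 2d_\alpha'$ while keeping $d_\alpha'\le d_\alpha$). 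This is harmless for every later application of the proposition, where only the qualitative decay $Ce^{-d|x|^{2\alpha}}$ for $\|v\|_{L^2}$ bounded below matters, but to get the statement exactly as written you should replace the case analysis by the one-line subadditivity inequality above.
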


\begin{remark}
From the pointwise bound above, we readily see that the map 
$$
\Tss : X_\alpha \to X_\alpha
$$
is well-defined provided that $\sigma \neq 0$. However, for the case $\sigma=0$, we have in general that
$$
\Ts^{(0)}_\lambda = \Ts_\lambda : X_\alpha \setminus \{ 0 \} \to X_\alpha.
$$
For instance, if $K(x) \equiv 1$, we obviously see that $\Ts_\lambda[0](x) \equiv \lambda \not \in X_\alpha$.
\end{remark}

\begin{proof}
Let $v \in X_\alpha$ be given and write $\Tss[v](x) = \lambda e^{-\frac{1}{2} \sigma^2 x^2} \sqrt{K(x)} e^{w(x)}$ with
$$
w(x) := -\frac{1}{2} \int_0^x H_\alpha(v^2)(y) \, dy = -d_\alpha \int_{\R} (|x-y|^{2 \alpha}- |y|^{2 \alpha}) v(y)^2 \, dy
$$
with some constant $d_\alpha > 0$, where the second equality is taken from Section \ref{sec:prelim}. Using the elementary inequality $-|x-y|^{2 \alpha} \leq -|x|^{2 \alpha} + |y|^{2 \alpha}$ for $\alpha \in (0, \frac{1}{2})$ together with $v^2 \geq 0$, we estimate
$$
w(x) \leq -d_\alpha \| v \|_{L^2}^2 |x|^{2 \alpha} + 2 d_\alpha \| |x|^{\alpha} v \|_{L^2}^2 \quad \mbox{for $x \in \R$}.
$$
We thus obtain the claimed pointwise bound for $0 < \Tss[v](x) \leq \lambda \| \sqrt{K} \|_{L^\infty} e^{-\frac{1}{2} \sigma^2 x^2} e^{w(x)}$.

Next, we note that $w(x)$ is even, since $v(y)^2$ is even. Hence it follows $\Tss[v](-x)=\Tss[v](x)$ by the fact $K(x)=K(-x)$. Since $K(x) > 0$ is $C^1$, we see that $\sqrt{K(x)}$ is $C^1$. Thus it remains to show that $w(x)$ is $C^1$. To see this, we note that $H_\alpha(v^2) = k_\alpha \ast v^2$ with a kernel $k_\alpha \in L^{p_1}(\R) + L^{p_2}(\R)$ with some $1 < p_1 < p_2 < \infty$. Since $v^2 \in L^1(\R) \cap L^\infty(\R)$, we deduce that $H_\alpha(v^2) = k_\alpha \ast v^2$ is a bounded and continuous functions. This shows that $w$ is $C^1$.

Finally, with $w(x)$ as above, we find that
$$
\pt_x \Tss[v](x) = \Tss[v](x) \left ( -\sigma^2 x - \frac{1}{2} H_\alpha(v^2)(x) \right ) + \lambda e^{-\frac{1}{2} \sigma^2 x^2} e^{w(x)} \pt_x \sqrt{K(x)}.
$$
Since $\| \pt_x \sqrt{K} \|_{L^\infty} \lesssim 1$ by assumption and from $\| H_\alpha(v^2) \|_{L^\infty} \lesssim \| v \|_{X^\alpha}^2$, we deduce the claimed estimate for $\pt_x \Tss[v](x)$.
\end{proof}

\begin{lem} \label{lem:Ts_compact}
The map
$$
\Tss : X_\alpha \setminus \{ 0 \} \to X_\alpha
$$
is continuous and locally compact in the sense that for every $v \in X_\alpha \setminus \{ 0 \}$ and $0 < \eps < \| v \|_{L^2}$ the image $\Tss(B_\eps(v))$ is relatively compact in $X_\alpha$.
\end{lem}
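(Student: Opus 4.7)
The plan is to leverage the sharp pointwise estimates from Proposition \ref{prop:T_bounds} together with a uniform positive lower bound on $\|v\|_{L^2}$, which becomes available precisely when we stay in a ball $B_\eps(v_0) \subset X_\alpha$ with $v_0 \neq 0$ and $\eps < \|v_0\|_{L^2}$. First I would record that every $v \in B_\eps(v_0)$ satisfies $m \leq \|v\|_{L^2}$, $\||x|^\alpha v\|_{L^2} \leq M$ and $\|v\|_{X^\alpha} \leq M$, where $m := \|v_0\|_{L^2} - \eps > 0$ and $M := \|v_0\|_{X^\alpha} + \eps$. Plugging these into Proposition \ref{prop:T_bounds} yields, with constants $C, C'$ depending only on $v_0, \eps, \lambda, \sigma, K$,
$$0 < \Tss[v](x) \leq C e^{-d_\alpha m^2 |x|^{2\alpha}}, \qquad |\pt_x \Tss[v](x)| \leq C'(1+|x|) e^{-d_\alpha m^2 |x|^{2\alpha}}$$
uniformly for $v \in B_\eps(v_0)$. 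These are the two workhorses of both assertions.

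For continuity at $v_0$, write $\Tss[v](x) = \lambda\sqrt{K(x)}\,e^{-\sigma^2 x^2/2} e^{w_v(x)}$ with
$$w_v(x) = -\frac{1}{2}\int_0^x H_\alpha(v^2)(y)\,dy = -d_\alpha \int_\R(|x-y|^{2\alpha} - |y|^{2\alpha})\, v(y)^2\,dy$$
(cf. Proposition \ref{prop:riesz_integral}). The pointwise bound $||x-y|^{2\alpha} - |y|^{2\alpha}| \leq |x|^{2\alpha}$ together with $\|v^2 - v_0^2\|_{L^1} \leq \|v - v_0\|_{L^2} (\|v\|_{L^2} + \|v_0\|_{L^2})$ gives $|w_v(x) - w_{v_0}(x)| \lesssim |x|^{2\alpha} \|v - v_0\|_{L^2}$ uniformly on $B_\eps(v_0)$. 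Combining the elementary estimate $|e^a - e^b| \leq \max(e^a, e^b)|a - b|$ with the uniform bound on $\Tss[v]$ and $\Tss[v_0]$ above, I obtain
$$|\Tss[v](x) - \Tss[v_0](x)| \lesssim |x|^{2\alpha} e^{-d_\alpha m^2 |x|^{2\alpha}} \|v - v_0\|_{L^2}.$$
Since the factor $|x|^{2\alpha} e^{-d_\alpha m^2 |x|^{2\alpha}}$ lies in $L^\infty(\R) \cap L^2(\R) \cap L^2(\R, |x|^{2\alpha}\,dx)$, this simultaneously controls $\|\Tss[v] - \Tss[v_0]\|_{L^\infty}$, $\|\Tss[v] - \Tss[v_0]\|_{L^2}$ and $\||x|^\alpha(\Tss[v] - \Tss[v_0])\|_{L^2}$, which yields continuity.

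For local compactness, take any sequence $(v_n) \subset B_\eps(v_0)$. The bounds of the first paragraph show that $\Tss[v_n]$ is uniformly bounded and uniformly equicontinuous on compact subsets of $\R$, so Arzel\`a--Ascoli produces a subsequence (not relabeled) with $\Tss[v_n] \to u$ uniformly on compacts. The uniform decay $|\Tss[v_n](x)| \leq C e^{-d_\alpha m^2 |x|^{2\alpha}}$ renders the tail $\{|x| > R\}$ uniformly small in $n$, upgrading the convergence to $L^\infty(\R)$; dominated convergence, with envelope $4 C^2 e^{-2 d_\alpha m^2 |x|^{2\alpha}}$ (respectively $4 C^2 |x|^{2\alpha} e^{-2 d_\alpha m^2 |x|^{2\alpha}}$), then delivers convergence in $L^2(\R)$ and in $L^2(\R, |x|^{2\alpha}\,dx)$. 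The limit $u$ is an even bounded function and thus belongs to $X_\alpha$, so $\Tss(B_\eps(v_0))$ is relatively compact.

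The single real obstacle is reconciling the potentially growing error $|w_v(x) - w_{v_0}(x)| \lesssim |x|^{2\alpha}$ with uniform control at infinity; this is exactly what the hypothesis $0 < \eps < \|v_0\|_{L^2}$ buys, since the resulting lower bound $\|v\|_{L^2} \geq m > 0$ produces the competing exponential decay $e^{-d_\alpha m^2 |x|^{2\alpha}}$ from Proposition \ref{prop:T_bounds} that swallows the polynomial factor and renders every estimate uniform. Note that $\sigma \neq 0$ is \emph{not} needed here, which is why the statement applies on the punctured set $X_\alpha \setminus \{0\}$.
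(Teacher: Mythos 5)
Your proposal is correct. The local compactness half runs exactly as in the paper: the hypothesis $0<\eps<\|v_0\|_{L^2}$ gives the uniform lower bound $\|v\|_{L^2}\geq \|v_0\|_{L^2}-\eps>0$ on the ball, Proposition \ref{prop:T_bounds} then yields the uniform decay and derivative bounds, and your Arzel\`a--Ascoli/dominated-convergence argument is precisely the content of the paper's auxiliary compactness result (Lemma \ref{lem:compact}), which you simply reprove inline. Where you genuinely diverge is the continuity half: the paper argues softly, showing pointwise a.e.\ convergence $\Tss[v_k](x)\to\Tss[v](x)$ via $H_\alpha(v_k^2)\to H_\alpha(v^2)$ in $L^\infty$ and then using the already-established local compactness to upgrade a subsequence to convergence in $X_\alpha$, with the limit identified by the pointwise convergence; you instead prove a direct quantitative estimate $|\Tss[v](x)-\Tss[v_0](x)|\lesssim |x|^{2\alpha}e^{-d_\alpha m^2|x|^{2\alpha}}\|v-v_0\|_{L^2}$ via $|e^a-e^b|\leq\max(e^a,e^b)|a-b|$ and the $L^1$-control of $v^2-v_0^2$. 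Your route buys more: it shows $\Tss$ is locally Lipschitz on $X_\alpha\setminus\{0\}$, not merely continuous, and it decouples continuity from compactness; the paper's route is shorter because it recycles the compactness step it needs anyway. Both hinge on the same mechanism you correctly identify at the end, namely that the lower bound $m>0$ produces the exponential weight $e^{-d_\alpha m^2|x|^{2\alpha}}$ that absorbs the $|x|^{2\alpha}$ growth of the error in the exponent, and indeed $\sigma\neq 0$ is never used.
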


\begin{proof}
Let $\lambda > 0$ and $\sigma \in \R$ be given. We divide the proof into the following steps.

\medskip
{\em Local Compactness.} Let $v \in X_\alpha \setminus \{ 0 \}$ be given assume that $0 < \eps < \| v \|_{L^2}$. Then $B_\eps(v) \subset X_\alpha \setminus \{ 0 \}$ and $\| w \|_{L^2} \geq \| v \|_{L^2} - \eps > 0$ for any $w \in B_\eps(v)$. From Proposition \ref{prop:T_bounds} we deduce the uniform pointwise bounds
$$
|\Tss[w](x)| \leq C e^{-d |x|^{2 \alpha}} \quad \mbox{and} \quad |\pt_x \Tss[w](x)| \leq C \quad \mbox{for $x \in \R$ and $w \in B_\eps(v)$}
$$
with some constants $C>0$ and $d > 0$. Applying Lemma \ref{lem:compact} below, we conclude that the image $\Tss(B_\eps(v))$ is relatively compact in $X_\alpha$.

\medskip
{\em Continuity.} Suppose $(v_k)_{k \geq 1} \subset X_\alpha \setminus \{ 0 \}$ is a sequence such that $v_k \to v$ in $X_\alpha$ with some $v \in X_\alpha \setminus \{ 0 \}$. Since $H_\alpha(v_k^2) \to H_\alpha(v^2)$ in $L^\infty(\R)$, it follows from dominated convergence that $\int_0^x H_\alpha(v_k^2)(y) \, dy \to \int_0^x H_\alpha(v^2)(y) \, dy$ for all $x \in \R$. Hence we have the pointwise convergence
$$
\Tss[v_k](x) \to \Tss[v](x) \quad \mbox{for a.\,e.~$x \in \R$}.
$$
On the other hand, by local compactness shown above, we deduce that $(\Tss[v_{k_j}])_{j \geq 1}$ is strongly convergent in $X_\alpha$ after passing to a subsequence. But from above we deduce that the limit must be $\Tss[v](x)$ and hence independent of the chosen subsequence. This shows that $\Tss[v_k] \to \Tss[v]$ in $X_\alpha$.
\end{proof}

If $\sigma \neq 0$, we can strengthen the previous result as follows.

\begin{lem}
If $\sigma \neq 0$, then the map $\Tss : X_\alpha \to X_\alpha$ is continuous and compact.
\end{lem}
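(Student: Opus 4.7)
The plan is to repeat the strategy of the preceding lemma, but observe that when $\sigma \neq 0$ the Gaussian factor $e^{-\sigma^2 x^2 /2}$ present in the definition of $\Tss$ provides an $x$-decay that is uniform over arbitrary bounded subsets of $X_\alpha$, with no restriction away from $0$ needed.

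\textbf{Step 1: Global compactness.} Fix a bounded set $B \subset X_\alpha$, say with $\|v\|_{X^\alpha} \leq M$ for all $v \in B$. Applying Proposition \ref{prop:T_bounds} and throwing away the nonnegative factor $-d_\alpha \|v\|_{L^2}^2 |x|^{2\alpha}$ (which only helps), we obtain the uniform bounds
\[
|\Tss[v](x)| \lesssim \lambda\, e^{2 d_\alpha M^2}\, e^{-\frac{1}{2}\sigma^2 x^2}, \qquad |\pt_x \Tss[v](x)| \lesssim \lambda\, e^{2 d_\alpha M^2}\, e^{-\frac{1}{2}\sigma^2 x^2}\bigl(\sigma^2|x| + M^2 + 1\bigr)
\]
valid for all $v \in B$ and all $x \in \R$. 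Since $\sigma \neq 0$, the right-hand sides have Gaussian decay in $|x|$ and in particular are uniformly bounded; the equicontinuity coming from the derivative bound and the uniform decay allow us to invoke the compactness criterion of Lemma \ref{lem:compact} on $X_\alpha$ to conclude that $\Tss(B)$ is relatively compact in $X_\alpha$.

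\textbf{Step 2: Continuity.} Let $v_k \to v$ in $X_\alpha$. Writing $v_k^2 - v^2 = (v_k-v)(v_k+v)$ and using the Banach algebra type estimates
\[
\|v_k^2 - v^2\|_{L^1} \leq \|v_k-v\|_{L^2}\|v_k+v\|_{L^2}, \qquad \|v_k^2 - v^2\|_{L^\infty} \leq \|v_k-v\|_{L^\infty}\|v_k+v\|_{L^\infty},
\]
we see that $v_k^2 \to v^2$ in $L^1(\R) \cap L^\infty(\R)$. Lemma \ref{lem:H_a} then gives $H_\alpha(v_k^2) \to H_\alpha(v^2)$ in $L^\infty(\R)$, so by dominated convergence $\int_0^x H_\alpha(v_k^2)\,dy \to \int_0^x H_\alpha(v^2)\,dy$ for every $x \in \R$, and therefore
\[
\Tss[v_k](x) \longrightarrow \Tss[v](x) \quad \text{pointwise for every } x \in \R.
\]

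\textbf{Step 3: Upgrade to norm convergence.} The sequence $\{v_k\}$ is bounded in $X_\alpha$, so by Step 1 the image $\{\Tss[v_k]\}$ is relatively compact in $X_\alpha$. Any $X_\alpha$-convergent subsequence must converge pointwise to the same limit as the full sequence, namely $\Tss[v]$ by Step 2, so the limit of every subsequence is the same. A standard argument then yields $\Tss[v_k] \to \Tss[v]$ in $X_\alpha$, proving continuity on all of $X_\alpha$ (including at $v=0$, which is precisely the improvement over Lemma \ref{lem:Ts_compact}).

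No essential difficulty arises; the only mild point to notice is that the Gaussian damping $e^{-\sigma^2 x^2/2}$ is exactly what replaces the missing $L^2$-lower bound on $v$ used in the previous lemma, allowing the decay and equicontinuity estimates to extend uniformly across the origin $v=0$.
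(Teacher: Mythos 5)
Your proposal is correct and takes essentially the same route as the paper: uniform Gaussian-decay and derivative bounds from Proposition \ref{prop:T_bounds} (valid on bounded sets since $\sigma \neq 0$) fed into the compactness criterion of Lemma \ref{lem:compact}, and continuity obtained from pointwise convergence of $\Tss[v_k]$ combined with relative compactness of the image. The extra detail you give for the continuity step (the $L^1 \cap L^\infty$ product estimate for $v_k^2 - v^2$ and the subsequence argument) is precisely what the paper summarizes as being ``analogous to the previous proof.''
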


\begin{proof}
Let $(v_k)_{k \geq 1} \subset X_\alpha$ be a bounded sequence. Since $\sigma \neq 0$, we deduce from Proposition \ref{prop:T_bounds} that $| \Tss[v_k](x)| \leq C e^{-\frac{1}{2} \sigma^2 x^2} \leq C e^{-d |x|^{2 \alpha}}$ and $|\pt_x \Tss[v_k](x)| \leq C$ for all $x \in \R$ and $k \geq 1$, where $C>0$ and $d > 0$ are some constants independent of $k$. By Lemma \ref{lem:compact}, there exists a strongly convergent subsequence $(\Tss[v_{k_j}])_{j \geq 1}$ in $X_\alpha$. Thus the map $\Tss : X_\alpha \to X_\alpha$ is compact, provided that $\sigma \neq 0$.

The continuity of $\Tss : X_\alpha \to X_\alpha$ follows in analogous way as in the previous proof.
\end{proof}

%%%%%%%%%%%%%%%%%%%%%%%%%%

\section{Existence of Fixed Points} \label{sec:apriori}

As in the previous section, we set $\alpha = s -\frac{1}{2} \in (0, \frac{1}{2})$ and we suppose that $K : \R \to \R_{>0}$ satisfies Assumption \textbf{(A)}. Unless explicitly stated otherwise, we assume that $\lambda > 0$ and $\sigma \in \R$.

\subsection{A-Priori Bounds and Existence}
We first record the important facts that fixed points of the map $\Tss$ must be symmetric-decreasing functions. For a general background on symmetric-decreasing rearrangement, we refer to \cite{LiLo-01}. 

\begin{prop}
If $v = \Tss[v]$ for some $v \in X_\alpha$, then $v=v^*$ is symmetric-decreasing.
\end{prop}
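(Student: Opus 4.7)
The plan is to reduce the statement to the moving-plane symmetry result Lemma \ref{lem:symmetry}, applied to an auxiliary function $u$ rather than to $v$ directly. Set $\tilde{K}(x) := e^{-\sigma^2 x^2} K(x)$, which inherits from $K$ positivity, evenness, monotonicity in $|x|$, and $C^1$ regularity, so it meets the hypotheses of that lemma.

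Let $v = \Tss[v] \in X_\alpha$ be a fixed point. By the defining formula \eqref{def:Ts} and Proposition \ref{prop:T_bounds}, $v$ is strictly positive and of class $C^1$, while membership in $X_\alpha$ forces $v$ to be even. I would then introduce
$$
u(x) := \log \frac{v(x)^2}{\tilde{K}(x)}
$$
and verify that $u$ is an \emph{integral} solution of $\Ds u = \tilde{K}e^u$ with $\tilde{K}e^u \in L^1(\R)$. Taking logarithms in $v = \Tss[v]$ gives $u(x) = 2\log \lambda - \int_0^x H_\alpha(v^2)(y)\,dy$; substituting $v^2 = \tilde{K}e^u$ (which lies in $L^1(\R)\cap L^\infty(\R)$ by Proposition \ref{prop:T_bounds}) and invoking Proposition \ref{prop:riesz_integral}, this rewrites as
$$
u(x) = -c_\alpha \int_\R \bigl(|x-y|^{2\alpha} - |y|^{2\alpha}\bigr)\tilde{K}(y) e^{u(y)}\,dy + u(0),
$$
which displays $u$ in integral form. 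The elementary bound $\bigl||x-y|^{2\alpha} - |y|^{2\alpha}\bigr| \leq |x|^{2\alpha}$ combined with $v^2 \in L^1(\R)$ yields $u(x) = O(|x|^{2\alpha})$, so $u \in L_s(\R)$.

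With all hypotheses in hand, Lemma \ref{lem:symmetry} produces a point $x_0 \in \R$ around which $u$ is even and monotone-decreasing. Since $v$ is even, so is $u$, which pins $x_0 = 0$. Consequently $\tilde{K}$ and $e^u$ are both nonnegative, even, and monotone-decreasing in $|x|$, and so is their product $v^2 = \tilde{K}e^u$; taking the positive square root gives $v = v^*$, as claimed. The only real work lies in the translation between the fixed-point equation for $v$ and the integral equation for $u$; once that passage is checked, Lemma \ref{lem:symmetry} does the heavy lifting, and no separate treatment of $\sigma = 0$ versus $\sigma \neq 0$ is needed, because evenness of $v$ alone fixes the symmetry centre at the origin.
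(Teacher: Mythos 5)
Your proof is correct and follows essentially the same route as the paper: pass to $u=\log(\tilde K^{-1}v^2)$, check it is an (even) integral solution of $\Ds u=\tilde K e^u$, apply the moving-plane result (Lemma \ref{lem:symmetry}), and use evenness to fix the center at $x_0=0$. The only difference is cosmetic: the paper invokes Lemma \ref{lem:reformulation} for the passage from the fixed-point equation to the integral equation (with $K$ tacitly replaced by $e^{-\sigma^2x^2}K$), whereas you re-derive that step directly and verify explicitly that $\tilde K$ satisfies the hypotheses.
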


\begin{proof}
Let $v \in X_\alpha$ solve $v = \Tss[v]$ which clearly implies that $v(x) > 0$ for all $x \in \R$. From Lemma \ref{lem:reformulation} we know that $u=\log(K^{-1} v^2) \in L_s(\R)$ is an even solution of \eqref{eq:Gelf} with $K e^u \in L^1(\R)$. Since $u$ is an integral solution, we can apply Lemma \ref{lem:symmetry}  to deduce that $u$ symmetric-decreasing with respect to some point $x_0 \in \R$. Since $u$ is even, we must have $x_0=0$. Thus $v(x) = \sqrt{K(x) e^{u(x)}}$ is symmetric-decreasing, which shows that $v=v^*$. 
\end{proof}

Next, we derive a straightforward a-priori bound for fixed points in the case $\sigma \neq 0$.

\begin{lem} \label{lem:apriori_easy}
Let $\sigma \neq 0$. Then there exists a constant $M=M(\sigma, \alpha) >0$ such that, for all $v \in X_\alpha$, we have the implication
$$
\mbox{$v = \beta \Tss[v]$ for some $\beta \in [0,1]$} \quad \Rightarrow \quad \|v \|_{X^\alpha} < \lambda \|K \|_{L^\infty}^{1/2}  M . 
$$
\end{lem}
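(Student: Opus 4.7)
The plan is to exploit the fact that $\lambda$ enters the definition of $\Tss$ as a pure multiplicative scalar, so $\beta\,\Tss[v] = \mathbf{T}^{(\sigma)}_{\beta\lambda}[v]$. Hence the hypothesis $v = \beta\,\Tss[v]$ is equivalent to $v$ being a genuine fixed point of $\mathbf{T}^{(\sigma)}_{\beta\lambda}$. The case $\beta = 0$ forces $v \equiv 0$ and is trivial; for $\beta \in (0,1]$ the preceding proposition applies with the positive parameter $\beta\lambda$ and yields $v = v^*$, so in particular $v^2$ is symmetric-decreasing as well.

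With $v^2 = (v^2)^*$, Lemma \ref{lem:H_a} gives $H_\alpha(v^2)(x) \geq 0$ for every $x \geq 0$, and by odd symmetry of $H_\alpha(v^2)$ this upgrades to $\int_0^x H_\alpha(v^2)(y)\,dy \geq 0$ for all $x \in \R$. Combined with $\sqrt{K(x)} \leq \|K\|_{L^\infty}^{1/2}$ and the explicit form of $\Tss$, this yields the Gaussian envelope
\[
0 < v(x) \;=\; \beta\,\lambda\,\sqrt{K(x)}\,e^{-\sigma^2 x^2/2}\,e^{-\frac{1}{2}\int_0^x H_\alpha(v^2)(y)\,dy} \;\leq\; \lambda\,\|K\|_{L^\infty}^{1/2}\,e^{-\sigma^2 x^2/2}.
\]

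Because $\sigma \neq 0$, this pointwise bound is bounded and integrable against $(1+|x|^{2\alpha})\,dx$, so that
\begin{gather*}
\|v\|_{L^\infty} \leq \lambda\|K\|_{L^\infty}^{1/2}, \\
\|v\|_{L^2}^2 \leq \lambda^2 \|K\|_{L^\infty}\int_\R e^{-\sigma^2 x^2}\,dx, \\
\||x|^\alpha v\|_{L^2}^2 \leq \lambda^2 \|K\|_{L^\infty}\int_\R |x|^{2\alpha} e^{-\sigma^2 x^2}\,dx,
\end{gather*}
with the two Gaussian moments finite and depending only on $\sigma$ and $\alpha$. Summing produces $\|v\|_{X^\alpha} \leq \lambda\|K\|_{L^\infty}^{1/2}\,M_0$ for some $M_0 = M_0(\sigma,\alpha)$, and any choice $M > M_0$ gives the asserted strict inequality. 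I do not foresee any real obstacle here: the decisive input is the sign of $\int_0^x H_\alpha(v^2)$, which follows directly from the symmetric-decreasing proposition above. Precisely this shortcut becomes unavailable when $\sigma = 0$, which is exactly why the harder existence step in that regime will later require the reverse Hardy--Littlewood--Sobolev inequality and Pohozaev-type identities flagged in the strategy section.
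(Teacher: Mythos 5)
Your argument is correct and is essentially the paper's own proof: symmetric-decreasingness of the fixed point, the sign of $H_\alpha(v^2)$ from Lemma \ref{lem:H_a} to discard the exponential factor, and the Gaussian envelope $\lambda\|K\|_{L^\infty}^{1/2}e^{-\sigma^2 x^2/2}$ whose $X_\alpha$-norm is a constant $M(\sigma,\alpha)$. Your explicit remark that $\beta\,\Tss[v]=\mathbf{T}^{(\sigma)}_{\beta\lambda}[v]$, so the symmetry proposition applies with parameter $\beta\lambda$ (and $\beta=0$ is trivial), is a small clarification the paper leaves implicit, but the route is the same.
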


\begin{proof}
Let $\sigma \neq 0$ and $\lambda > 0$ be given. Assume that $v \in X_\alpha$ with $v = \beta \Tss[v]$ for some $\beta \in [0,1]$. Then
$$
0 \leq v(x) = \beta \lambda e^{-\frac{1}{2} \sigma^2 x^2} \sqrt{K(x)} e^{-\frac{1}{2} \int_0^x H_\alpha(v^2)(y) \, dy} \leq \lambda  \|K \|_{L^\infty}^{1/2} e^{-\frac{1}{2} \sigma^2 x^2} \quad \mbox{for $x \in \R$}.
$$
Here we used that  $\sqrt{K} \in L^\infty(\R)$ by assumption together with $H_\alpha(v^2)(y)\geq 0$ for $y \geq 0$ and $H_\alpha(v^2)(y) \leq 0$ for $y \leq 0$ by Lemma \ref{lem:H_a}, since $v=v^*$ is symmetric-decreasing. Hence $\| v \|_{X^\alpha} \leq \lambda \|K \|_{L^\infty}^{1/2} \| e^{-\frac{1}{2} \sigma^2 x^2} \|_{X^\alpha} < \lambda \|K \|_{L^\infty}^{1/2} M$ with some constant $M= M(\sigma, \alpha) > 0$.
\end{proof}

\begin{cor} \label{cor:existence_fixedpoints}
For all $\sigma \neq 0$ and $\lambda > 0$, there exists a fixed point $v \in X_\alpha$ of the map $\Tss : X_\alpha \to X_\alpha$.
\end{cor}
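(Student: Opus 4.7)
The corollary is essentially set up as a textbook application of the Schaefer--Schauder fixed point theorem (also known as the Leray--Schauder alternative), which states: if $T : X \to X$ is a continuous and compact map on a Banach space $X$, and if the set
\[
\Sigma := \{ v \in X : v = \beta T[v] \text{ for some } \beta \in [0,1] \}
\]
is bounded in $X$, then $T$ admits a fixed point in $X$.

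My plan is to verify the hypotheses of this theorem for $T = \Tss$ on the Banach space $X = X_\alpha$. First, for $\sigma \neq 0$, the preceding lemma already establishes that $\Tss : X_\alpha \to X_\alpha$ is continuous and compact, so the topological hypotheses are in place. Second, the required a-priori boundedness of $\Sigma$ is exactly the content of Lemma \ref{lem:apriori_easy}: any $v \in X_\alpha$ satisfying $v = \beta \Tss[v]$ for some $\beta \in [0,1]$ obeys $\| v \|_{X^\alpha} < \lambda \| K \|_{L^\infty}^{1/2} M$ with $M = M(\sigma, \alpha) > 0$ independent of $\beta$. In particular, $\Sigma$ is contained in a ball of fixed radius in $X_\alpha$.

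With both hypotheses verified, Schaefer's theorem yields a $v \in X_\alpha$ with $v = \Tss[v]$, completing the proof. There is no real obstacle to overcome here, since all of the analytic heavy lifting (compactness, continuity, and the uniform a-priori bound) has already been done; the corollary is just the formal packaging of those ingredients into the statement of an existence result.
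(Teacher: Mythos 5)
Your proposal is correct and coincides with the paper's own argument: the paper likewise invokes the Schaefer--Schauder fixed point theorem, combining the continuity and compactness of $\Tss$ for $\sigma \neq 0$ with the a-priori bound of Lemma \ref{lem:apriori_easy} on solutions of $v = \beta \Tss[v]$, $\beta \in [0,1]$. Nothing more is needed.
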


\begin{proof}
Since $\Tss : X_\alpha \to X_\alpha$ is continuous and compact when $\sigma \neq 0$, the result follow from the a-priori bound in Lemma \ref{lem:apriori_easy} and the classical Schaefer--Schauder fixed point theorem; see, e.\,g.~\cite{GiTr-01}[Theorem 11.3].
\end{proof}

Next, we turn to the more delicate case when $\sigma=0$ holds. Here the simple a-priori bound in Lemma \ref{lem:apriori_easy} breaks down in general, since we see that $M \to +\infty$ as $\sigma \to 0$. Moreover, we recall that the map $\Ts_{\lambda} = \Ts_\lambda^{(0)}$ is only locally compact and it is only defined on the non-convex set $X_\alpha \setminus \{ 0 \}$. Thus we need a more sophisticated analysis to deduce existence of fixed point of the map $\Ts_{\lambda}$.

Indeed, we have the following bound on fixed points for $\Tss$ independent of $\sigma \in \R$. As interesting aside, we remark that the proof involves  the {\em reverse Hardy--Littlewood--Sobolev inequality}, where the so-called conformally invariant case is sufficient for our purposes.

\begin{lem} \label{lem:HLS}
Every fixed point $v \in X_\alpha$ of $\Tss$ satisfies the bound
$$
\| v \|_{L^p} \leq C(\lambda, p, \| K \|_{L^\infty}) \quad \mbox{for} \quad \mbox{$\frac{2}{1+ \alpha} \leq p \leq \infty$},
$$
with some constant $C=C(\lambda,p,\| K \|_{L^\infty}) > 0$ independent of $\sigma \in \R$.
\end{lem}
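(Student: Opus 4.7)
My plan is to combine a Pohozaev-type identity with the reverse HLS inequality in its conformally invariant form. Throughout let $p_0 := 2/(1+\alpha) \in (1,2)$, $M := \|v\|_{L^2}^2$, $V_0 := \|v\|_{L^\infty}$, and $I_1 := \int_\R \int_\R v^2(x) |x-y|^{2\alpha} v^2(y) \, dx \, dy$. The $L^\infty$ bound is immediate: evaluating $v = \Tss[v]$ at $x = 0$ gives $v(0) = \lambda \sqrt{K(0)}$, and since $v=v^*$ is symmetric-decreasing (by the preceding Proposition), we obtain $V_0 = v(0) \leq \lambda \|K\|_{L^\infty}^{1/2}$.

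For the $L^{p_0}$ bound I derive a Pohozaev-type identity by computing $\int_\R x u'(x) v^2(x) \, dx$ in two different ways. By Lemma \ref{lem:reformulation} the function $u := \log v^2 - \log K + \sigma^2 x^2 \in L_s(\R)$ is an even integral solution of $\Ds u = v^2$, so Proposition \ref{prop:integral_form} yields the representation $u(x) = -c_\alpha g(x) + 2\log\lambda$ with $g(x) := \int_\R (|x-y|^{2\alpha} - |y|^{2\alpha}) v^2(y) \, dy$. Differentiating gives $u'(x) = -2\alpha c_\alpha \int_\R \sgn(x-y) |x-y|^{2\alpha-1} v^2(y) \, dy$; the kernel $\sgn(x-y) |x-y|^{2\alpha-1} v^2(x) v^2(y)$ is antisymmetric under $(x,y) \to (y,x)$ and, together with $(x-y)\sgn(x-y)|x-y|^{2\alpha-1} = |x-y|^{2\alpha}$, Fubini--symmetrization delivers $\int x u' v^2 \, dx = -\alpha c_\alpha I_1$. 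Alternatively, using $u = \log v^2 - \log K + \sigma^2 x^2$ and integrating by parts (with $\int x \pt_x(v^2) \, dx = -M$) produces $\int x u' v^2 \, dx = -M - \int x \pt_x(\log K) v^2 \, dx + 2\sigma^2 \int x^2 v^2 \, dx$. Equating yields the Pohozaev identity
\begin{equation*}
\alpha c_\alpha I_1 = M + \int_\R x \pt_x(\log K) \, v^2 \, dx - 2 \sigma^2 \int_\R x^2 v^2 \, dx.
\end{equation*}

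The main technical obstacle is rigorously justifying the integration by parts (vanishing of the boundary term $xv^2\to 0$ and absolute convergence of all integrals) above, which reduces to super-polynomial decay of $v^2$ at infinity uniformly in $\sigma$. This decay follows from the pointwise identity $v^2(x) = \lambda^2 K(x) e^{-\sigma^2 x^2} e^{-c_\alpha g(x)}$ combined with the lower bound $g(x) \gtrsim M |x|^{2\alpha}$ for all sufficiently large $|x|$. For the latter, choose $R_0$ so that $\int_{|y| > R_0} v^2 \leq \eta M$ with $\eta$ small (depending only on $\alpha$) and split the integral for $g(x)$ at $|y| \leq |x|/4$: on this subset one has $|x-y|^{2\alpha} - |y|^{2\alpha} \geq [(3/4)^{2\alpha} - (1/4)^{2\alpha}] |x|^{2\alpha}$, whereas on the complement the integrand is bounded from below by $-|x|^{2\alpha}$, and for $|x| \geq 4 R_0$ the positive contribution dominates and delivers $g(x) \gtrsim M|x|^{2\alpha}$.

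Once the identity is available, the structural Assumption \textbf{(A)} finishes the argument. Since $K$ is even and monotone-decreasing in $|x|$, we have $x \pt_x(\log K) \leq 0$ pointwise, and trivially $\sigma^2 \int x^2 v^2 \geq 0$, so both the middle and last terms have favorable signs and
\begin{equation*}
I_1 \leq M / (\alpha c_\alpha) \qquad \text{uniformly in } \sigma \in \R.
\end{equation*}
The conformally invariant reverse HLS inequality applied to $\rho = v^2$ with $\mu = 2\alpha$ and $q = 1/(1+\alpha)$ delivers the matching lower bound $I_1 \geq C_\alpha \|v\|_{L^{p_0}}^4$ (note $2q(1+\alpha) = 4$). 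Combined with the Hölder interpolation $M = \int v^{p_0} v^{2-p_0} \, dx \leq V_0^{2-p_0} \|v\|_{L^{p_0}}^{p_0}$, this yields
\begin{equation*}
\|v\|_{L^{p_0}}^{4-p_0} \leq \frac{V_0^{2-p_0}}{\alpha c_\alpha C_\alpha} \leq \frac{(\lambda \|K\|_{L^\infty}^{1/2})^{2-p_0}}{\alpha c_\alpha C_\alpha},
\end{equation*}
giving the desired bound on $\|v\|_{L^{p_0}}$ depending only on $\lambda, \alpha, \|K\|_{L^\infty}$. The full range $p \in [p_0, \infty]$ is then covered by the log-convexity interpolation $\|v\|_{L^p} \leq V_0^{1-p_0/p} \|v\|_{L^{p_0}}^{p_0/p}$.
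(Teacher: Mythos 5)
Your proposal is correct and follows essentially the same route as the paper: a Pohozaev-type identity obtained from the fixed-point equation (your computation of $\int_\R x\, \pt_x u \, v^2\, dx$ in two ways is the paper's multiplication of the ODE for $v$ by $-xv$), the symmetrization identity $\int_\R x v^2 H_\alpha(v^2)\,dx = \tfrac{d_\alpha}{2}\int\!\!\int v^2(x)|x-y|^{2\alpha}v^2(y)\,dx\,dy$, the sign conditions from Assumption \textbf{(A)} and $\sigma^2 \geq 0$, the conformally invariant reverse HLS inequality, the $L^\infty$ bound $v(0)=\lambda\sqrt{K(0)}$, and interpolation. Your H\"older step $\|v\|_{L^2}^2 \leq \|v\|_{L^\infty}^{2-p_0}\|v\|_{L^{p_0}}^{p_0}$ is exactly the paper's inequality $\int_\R (\rho/\|\rho\|_{L^\infty})^q\,dx \geq \int_\R \rho/\|\rho\|_{L^\infty}\,dx$ in disguise, and your decay estimate justifying the integrations by parts reproduces the paper's Lemma \ref{lem:decay}/Proposition \ref{prop:T_bounds} bounds.
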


\begin{remark}
Note that $\frac{4}{3} < \frac{2}{1+\alpha} < 2$ which follows from $0 < \alpha < \frac{1}{2}$. The fact that we obtain $L^p$-bounds for fixed points $v$ also for $p < 2$ will become useful further below, when showing existence of fixed points for the map $\mathbf{T}_\lambda=\Ts^{(0)}_\lambda$. 
\end{remark}

\begin{proof}
   Let $\sigma \in \R$  and $\lambda > 0$ be given. Suppose that $v \in X_\alpha$ solves $v = \Tss[v]$.  We recall that $v$ is  $C^1$ and solves the equation
    \begin{equation}
        \partial_x v= W v -\sigma^2 x v - \frac{1}{2} H_\alpha\left(v^2\right)v,
    \end{equation}
    where we set $W := \frac{1}{2} \pt_x \log K$. Multiplying both sides with $-xv$, followed by using that $(\partial_x v)v = \frac{1}{2} \partial_x\left(v^2\right)$ and integration over $\mathbb{R}$, we find 
    \begin{equation}
        -\int_{\mathbb{R}} x \partial_x (v^2) \, dx = -\int_{\R} W x v^2 \, dx + 2  \sigma^2 \int_{\mathbb{R}} x^2 v^2 \, dx + \int_{\mathbb{R}} x v^2 H_\alpha(v^2) \, dx.
    \end{equation}
    We remark that $|v(x)| \leq C e^{-d |x|^{2 \alpha}}$ with some constants $C, d > 0$, which is easily shows that the integrals are absolutely convergent. Next, we observe that 
    $$
    \sigma^2 \int_\mathbb{R} x^2 v^2 \, dx \geq 0 \quad \mbox{and} \quad - \! \int_\R W x v^2 \, dx \geq 0,
    $$ 
    since $-W x = -\frac{1}{2} (\pt_x \log K) x = -\frac{1}{2} \frac{\pt_x K}{K} x \geq 0$ for all $x \in \R$ due to the fact that $K=K(x) >0$ is even and monotone decreasing in $|x|$. Since $-\int_\R x \pt_x(v^2) = \int_\R v^2$, we have shown that
    \begin{equation} \label{ineq:HLS_poho}
        \int_{\mathbb{R}} v^2 \, dx \geq \int_{\mathbb{R}} x v^2 H_{\alpha}(v^2) \, dx.
    \end{equation}
    
    Let us now introduce the function $\rho := v^2 \in L^1(\mathbb{R}) \cap L^\infty(\mathbb{R})$. For the term on the right-hand side in the inequality above, we calculate (with the positive constant $d_\alpha > 0$):
    \begin{align*}
    \int_\mathbb{R} x \rho (x) H_\alpha(\rho)(x) \, dx & = d_\alpha \int_{\mathbb{R}} \! \int_{\mathbb{R}} \rho(x) x \frac{x-y}{|x-y|^{2 - 2 \alpha}} \rho(y) \, dx \, dy \\
    & =  d_\alpha \int_{\mathbb{R}} \! \int_{\mathbb{R}} \rho(x) (x-y+y) \frac{x-y}{|x-y|^{2 - 2 \alpha}} \rho(y) \, dx \, dy \\
    & = d_\alpha \int_{\mathbb{R}} \! \int_{\mathbb{R}} \rho(x) |x-y|^{2 \alpha} \rho(y) \, dx \,dy  \\
    & \quad + d_\alpha \int_{\mathbb{R}} \! \int_{\mathbb{R}} \rho(x) y \frac{x-y}{|x-y|^{2-2\alpha}} \rho(y) \,dx \,dy.
    \end{align*}
    By interchanging $x$ and $y$ in the last integral, we arrive at the identity 
    \begin{equation}
        \int_\mathbb{R} x \rho(x) H_\alpha(\rho)(x) \, dx = \frac{d_\alpha}{2} \int_\mathbb{R} \! \int_{\mathbb{R}} \rho(x) |x-y|^{2 \alpha} \rho(y) \, dx \, dy.
    \end{equation}
    In view of \eqref{ineq:HLS_poho}, we thus obtain the estimate
    \begin{equation} \label{ineq:aprior1}
        \int_{\mathbb{R}} \rho(x) \, dx \gtrsim \int_{\mathbb{R}} \! \int_{\mathbb{R}} \rho(x) |x-y|^{2 \alpha} \rho(y) \, dx \, dy. 
    \end{equation}
    To proceed further, we make use of the {\em reverse Hardy--Littlewood--Sobolev (HLS) inequality} stated in \eqref{ineq:HLS_intro} above. In fact, we shall only need the following so-called conformally invariant case of the reverse HLS-inequality, which in our case reads
  \be \label{ineq:HLS}
      \int_{\mathbb{R}} \! \int_{\mathbb{R}} \rho(x) |x-y|^{2 \alpha} \rho(y) \, dx \, dy \gtrsim \left ( \int_\mathbb{R} \rho(x)^q \, dx \right )^{2/q} \quad \mbox{with} \quad q = \frac{1}{1+\alpha} \in (0,1) .
\ee
    Next, we use that $q \in (0,1)$ and $0 \leq \rho(x) \leq \| \rho \|_{L^\infty}$ to find  
    $$
    \int_\R \left ( \frac{\rho(x)}{\| \rho \|_{L^\infty}} \right )^q \, dx \geq \int_{\R} \frac{\rho(x)}{\| \rho \|_{L^\infty}} \, dx . 
      $$ 
    Combining \eqref{ineq:aprior1} with \eqref{ineq:HLS} and recalling that $2/q= 2 + 2 \alpha$ and $1-q=\frac{\alpha}{1+\alpha}$, we conclude
    \begin{equation}
           \| \rho \|_{L^\infty}^{\frac{\alpha}{1+\alpha}} \left ( \int_{\mathbb{R}} \rho(x)^q \, dx \right ) \gtrsim  \left ( \int_\R \rho(x)^q \, dx \right )^{2+2\alpha}
    \end{equation}
    Since  $2+2 \alpha > 1$, we deduce that
    \be
    \int_{\R} \rho(x)^q \, dx \lesssim \| \rho \|_{L^\infty}^{\frac{\alpha}{1+\alpha} \cdot \frac{1}{1+2 \alpha}}.
     \ee
     Note that $\rho^q = v^{2q} = v^{\frac{2}{1+\alpha}}$ and we also have $\| \rho \|_{L^\infty} = \| v \|_{L^\infty}^2 = v(0)^2$ because $v=v^*$ is symmetric-decreasing. From the fixed point equation we see that $v(0)^2 = \lambda^2 K(0) = \lambda^2 \|K \|_{L^\infty}$, where the last equality holds since $K$ is symmetric-decreasing. In summary, we conclude hat
     \be
     \| v \|_{L^{\frac{2}{1+\alpha}}} \lesssim (\lambda \| K \|_{L^\infty}^{\frac{1}{2}})^{\frac{\alpha}{1+ 2 \alpha}} \quad \mbox{and} \quad \| v \|_{L^\infty} \leq \lambda \| K \|_{L^\infty}^{\frac{1}{2}}.
     \ee
     The claimed bound for $\| v \|_{L^p}$ now follows from simple interpolation using H\"older's inequality.
\end{proof}

\begin{lem} \label{lem:nice}
Let $\lambda > 0$ be given. Suppose that $\sigma_n \neq 0$ is a sequence such that $\sigma_n \to 0$ and let $(v_n)_{n \geq 1} \subset X_\alpha$ be a sequence of fixed points of $\Ts_{\lambda}^{(\sigma_n)}$. Then, after passing to a subsequence, it holds that $v_{n} \to v$ in $X_\alpha$ and $v \not \equiv 0$ is a fixed point of $\Ts_{\lambda}^{(0)}$.
\end{lem}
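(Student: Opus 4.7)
My approach is to establish a uniform pointwise bound $v_n(x) \leq C e^{-c|x|^{2\alpha}}$ with constants $C, c > 0$ independent of $n$, extract a subsequence converging uniformly on $\R$ via Arzel\`a--Ascoli, pass to the limit in the fixed-point identity $v_n = \Ts_\lambda^{(\sigma_n)}[v_n]$, and finally deduce $X_\alpha$-convergence by dominated convergence. The non-triviality $v \not \equiv 0$ will follow from the normalization $v_n(0) = \lambda \sqrt{K(0)}$, which is encoded in the fixed-point equation. The main obstacle is producing the decay estimate \emph{uniformly} as $\sigma_n \to 0$, since the Gaussian factor $e^{-\sigma_n^2 x^2/2}$ that easily delivered decay in the estimates of Section \ref{sec:setup} now degenerates in the limit.

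\textbf{Uniform exponential decay.} By Lemma \ref{lem:HLS}, the norms $\|v_n\|_{L^p}$ are uniformly bounded for $p \in [\tfrac{2}{1+\alpha}, \infty]$, and by Lemma \ref{lem:H_a} this controls $\|H_\alpha(v_n^2)\|_{L^\infty}$ uniformly. Inserting this into the defining formula $v_n = \Ts_\lambda^{(\sigma_n)}[v_n]$, and using the boundedness of $\sigma_n \to 0$, the exponent in the formula is uniformly bounded on any compact interval, and the symmetric-decreasing property $v_n = v_n^*$ implies $v_n(x) \geq c_0 > 0$ for $|x| \leq r_0$; in particular $\|v_n\|_{L^2} \geq \delta > 0$ uniformly. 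Furthermore, the uniform $L^{2/(1+\alpha)}$-bound combined with $v_n = v_n^*$ yields $v_n(y) \leq C |y|^{-(1+\alpha)/2}$, whence $\int_{|y| > M} v_n^2 \, dy \lesssim M^{-\alpha}$. Repeating the computation of Lemma \ref{lem:decay} verbatim, this tail control together with the uniform lower bound $\|v_n\|_{L^2} \geq \delta$ delivers a uniform radius $R_0 > 0$ such that
$$
\int_\R \left(|x-y|^{2\alpha} - |y|^{2\alpha}\right) v_n(y)^2 \, dy \; \geq \; \tfrac{1}{2} \delta^2 |x|^{2\alpha} \quad \text{for } |x| \geq R_0 .
$$
Inserting this into the explicit formula for $v_n$ (via Proposition \ref{prop:riesz_integral}) then yields the desired uniform pointwise bound $v_n(x) \leq C e^{-c|x|^{2\alpha}}$.

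\textbf{Convergence and identification of the limit.} With the uniform decay in hand, $\|v_n\|_{X^\alpha}$ is uniformly bounded, so Proposition \ref{prop:T_bounds} supplies a uniform bound on $\|\pt_x v_n\|_{L^\infty(\R)}$. Arzel\`a--Ascoli then produces a locally uniformly convergent subsequence with limit $v$, which is promoted to uniform convergence on $\R$ by the exponential decay. The limit $v$ is continuous, even, symmetric-decreasing, and satisfies $v(0) = \lambda \sqrt{K(0)} > 0$, so $v \not \equiv 0$. Dominated convergence with the majorant $C e^{-c|x|^{2\alpha}}$ gives $v_n^2 \to v^2$ in $L^1(\R) \cap L^\infty(\R)$, hence $H_\alpha(v_n^2) \to H_\alpha(v^2)$ in $L^\infty(\R)$ by Lemma \ref{lem:H_a}, and therefore $\int_0^x H_\alpha(v_n^2)(y) \, dy \to \int_0^x H_\alpha(v^2)(y) \, dy$ locally uniformly in $x$. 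Combined with $e^{-\sigma_n^2 x^2/2} \to 1$, this allows us to pass to the limit in $v_n = \Ts_\lambda^{(\sigma_n)}[v_n]$ and obtain $v = \Ts_\lambda^{(0)}[v]$. Finally, the common majorant $C e^{-c|x|^{2\alpha}}$ yields dominated convergence in $L^2(\R)$ and $L^2(\R, |x|^{2\alpha}\, dx)$, while $L^\infty$-convergence follows by combining the uniform tail bound with local uniform convergence. This shows $v_n \to v$ in $X_\alpha$, completing the proof.
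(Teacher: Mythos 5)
Your proof is correct, and it departs from the paper's argument in one genuine respect: how the uniform non-degeneracy of the sequence is obtained. The paper first applies Helly's selection principle (Lemma \ref{lem:helly}) to the symmetric-decreasing sequence $(v_n)$ to extract an a.e.\ and $L^p$-convergent subsequence, passes to the limit pointwise in the fixed-point identity to identify a nontrivial limit $v$, and only then uses $v_n \to v$ in $L^2$ with $v \not\equiv 0$ to get the lower bound $\int_\R v_n^2 \geq c > 0$ that feeds into the Lemma \ref{lem:decay}-type computation for the uniform decay $v_n(x) \leq C e^{-d|x|^{2\alpha}}$. You instead read the lower bound directly off the fixed-point formula: since $\| H_\alpha(v_n^2) \|_{L^\infty}$ is uniformly controlled via Lemma \ref{lem:H_a} and the $L^2 \cap L^\infty$ bounds of Lemma \ref{lem:HLS}, and $\sigma_n$ is bounded, the exponent in $\Ts_\lambda^{(\sigma_n)}$ is uniformly bounded on compact sets, so $v_n \geq c_0 > 0$ on $[-r_0,r_0]$ uniformly in $n$ (the symmetric-decreasing property is not even needed for this step), whence $\|v_n\|_{L^2} \geq \delta > 0$; the non-vanishing of the limit then comes for free from the normalization $v_n(0) = \lambda\sqrt{K(0)}$. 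This bypasses Lemma \ref{lem:helly} and the paper's two-stage identification of the limit, while the remaining ingredients—the tail estimate from the $L^{2/(1+\alpha)}$-bound and monotonicity, the adaptation of Lemma \ref{lem:decay}, and the final compactness via equicontinuity plus uniform decay (cf.\ Proposition \ref{prop:T_bounds} and Lemma \ref{lem:compact})—coincide with the paper's. Two cosmetic remarks: the constant $\tfrac{1}{2}\delta^2$ in your displayed inequality is slightly optimistic (the split in Lemma \ref{lem:decay} yields a smaller positive constant, e.g.\ of the order $\tfrac{1}{8}\delta^2$), but only positivity matters; and, as you correctly note, the uniform Lipschitz bound needed for Arzel\`a--Ascoli is available from Proposition \ref{prop:T_bounds} only after the uniform $X_\alpha$-bound has been secured from the decay estimate.
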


\begin{proof}
We divide the proof into the following steps.

\medskip
\textbf{Step 1.} From the uniform bounds $\| v_n \|_{L^p}$ for $p \in [\frac{2}{1+\alpha}, \infty]$ derived in Lemma \ref{lem:HLS} and the fact that the $v_n = v_n^*$ are symmetric-decreasing functions, we infer from Lemma \ref{lem:helly} (after passing to a subsequence) that
\be
\mbox{$v_n \to v$ in $L^p(\R)$ for any $\frac{2}{1+\alpha} < p < \infty$ and $v_n \to v$ almost everywhere in $\R$}
\ee
with some symmetric-decreasing function $v=v^*$ with $0 \leq v(x) \leq \lambda \sqrt{K(0)}$ for a.\,e.~$x \in \R$. We claim that
$$
e^{-\frac{1}{2} \int_0^x H_\alpha(v_n^2)(y) dy }\to e^{-\frac{1}{2} \int_0^x H_\alpha(v^2)(y) \,dy } \quad \mbox{for almost every $x \in \R$}.
$$ 
To see this, we note that $\| H_\alpha(v_n^2) \|_{L^\infty} \lesssim \| v_n^2 \|_{L^1 \cap L^\infty} = \| v_n \|_{L^2 \cap L^\infty} \lesssim 1$. Thus, by dominated convergence,
$$
\lim_{n \to \infty} \int_0^x H_\alpha(v_n^2)(y) dy = \int_0^x \lim_{n \to \infty} H_\alpha(v_n^2)(y) dy = \int_0^x H_\alpha(v^2)(y) dy
$$
for every $x \in \R$, where in the second step we use that $H_\alpha(v_n^2)(y)\to H_\alpha(v^2)(y)$ for a.\,e.~$y \in \R$, which follows from $v_n \to v$ in $L^p$ for any $p \in (2, \infty)$ and the properties of $H_\alpha$.

In the equation
\be \label{eq:vn}
v_n(x) = \lambda \sqrt{K(x)} e^{-\frac{1}{2} \sigma_n^2 x^2} e^{-\frac{1}{2} \int_0^x H_\alpha(v_n^2)(y) dy}
\ee
we can pass to the limit $n \to \infty$ on both sides of the equation to conclude that the limit $v=v^* \in L^2(\R) \cap L^\infty(\R)$ satisfies
\be \label{eq:limit}
v(x) = \lambda \sqrt{K(x)} e^{-\frac{1}{2} \int_0^x H_\alpha(v^2)(y) dy} \quad \mbox{for almost every $x \in \R$}.
\ee
In particular, this equation shows that $v \not \equiv 0$.
 
 \medskip
 \textbf{Step 2.} We demonstrate that there exist constants $C, d > 0$ such that
 \be \label{eq:v_n_bounds}
 |v_n(x)| \leq  C e^{-d |x|^{2 \alpha}}, \quad |\pt_x v_n(x)| \leq C \quad \mbox{for all $x \in \R$ and $n \geq 1$}.
 \ee
To this end, we consider the sequence of functions
$$
w_n(x) = -\frac{1}{2} \int_0^x H_\alpha(v_n^2)(y) \, dy = -c_\alpha \int_\R (|x-y|^{2 \alpha} - |y|^{2 \alpha}) v_n(y)^2 \, dy .
$$
Since $v_n \to v$ in $L^2(\R)$ with $v \not \equiv 0$, there exists a constant $c>0$ and $n_0 \geq 1$ so that
$$
\int_{\R} v_n(y)^2 \, dy \geq c \quad \mbox{for all $n \geq n_0$}.
$$
Moreover, by the uniform decay estimate $v_n(x)^2 \lesssim |x|^{-2/p}$ with $p=\frac{2}{1+\alpha} < 2$, we find that the sequence $(v_n)$ is {\em tight} in $L^2(\R)$, i.\,e., for every $\eps > 0$ there exists $M > 0$ such that
$$
\int_{|y| > M} v_n(y)^2 \, dy \leq \eps \quad \mbox{for all $n \geq 1$}.
$$ 
Next, we argue as in the proof of Lemma \ref{lem:decay}. We can take some $M>0$ such that
$$
\int_{|y| > M} v_n(y)^2 \, dy \leq \frac{c}{4} \quad \mbox{and} \quad \int_{|y| \leq M} v_n(y)^2 \, dy \geq \frac{3 c}{4} \quad \mbox{for all $n \geq n_0$}.
$$
Let $\delta > 0$ be sufficiently small such that $|1-t|^{2 \alpha} - |t|^{2 \alpha} \geq \frac{1}{2}$ for $|t| \leq \delta$. Thus if we define $R:=\delta^{-1} M > 0$, we obtain
$$
|x-y|^{2 \alpha} - |y|^{2 \alpha} \geq \frac{1}{2} |x|^{2 \alpha} \quad \mbox{for $|y| \leq M$ and $|x| \geq R$}.
$$
Using that $v_n^2 \geq 0$ and the general bound $|x-y|^{2\alpha} -|y|^{2 \alpha} \geq -|x|^{2 \alpha}$, we deduce
\begin{align*}
& \int_\R (|x-y|^{2 \alpha}- |y|^{2 \alpha}) v_n(y)^2 \, dy \geq \frac{1}{2} |x|^{2 \alpha} \int_{|y| \leq M} v_n(y)^2 \, dy - |x|^{2 \alpha} \int_{|y| > M} v_n(y)^2 \, dy \\
& \geq \frac{3 c}{8} |x|^{2 \alpha} - \frac{c}{4} |x|^{2 \alpha} = \frac{c}{8} |x|^{2 \alpha} \quad \mbox{for $|x| \geq R$}.
\end{align*}
Choosing now $d = c_\alpha \cdot \frac{c}{8} >0$, we obtain the bound
$$
w_n(x) \leq -d |x|^{2 \alpha} \quad \mbox{for $|x| \geq R$ and $n \geq n_0$}.
$$
Therefore,
$$
0 < v_n(x) = \lambda \sqrt{K(x)} e^{-\frac{1}{2} \sigma_n^2 x^2} e^{w_n(x)}  \leq \lambda \| K \|_{L^\infty}^{1/2}  e^{-d |x|^{2 \alpha}} \quad \mbox{for $|x| \geq R$ and $n \geq n_0$}.
$$

Since also $0 < v_n(x) \leq v_n(0) = \lambda \sqrt{K(0)} = \lambda \| K \|_{L^\infty}^{1/2}$ for all $x \in \R$, we readily deduce that the first estimate in \eqref{eq:v_n_bounds} holds for some sufficiently large constant $C>0$ independent of $n$. Also, from the general estimate in Proposition \ref{prop:T_bounds}, we deduce
$$
|\pt_x v_n(x)| \leq C e^{-d|x|^{2 \alpha}} (\sigma_n^2 |x| + 1) \leq C \quad \mbox{for all $x \in \R$ and $n \geq 1$}
$$
with some constant $C >0$ independent of $n$. This proves \eqref{eq:v_n_bounds}.

By Lemma \ref{lem:compact}, we infer (after passing to a subsequence) that $v_n \to v$ in $X_\alpha$ and $v \not \equiv 0$ is a fixed point of $\Ts_\lambda=\Ts_{\lambda}^{(0)}$ thanks to \eqref{eq:limit}. The proof of Lemma \ref{lem:nice} is now complete.
\end{proof}

By Corollary \ref{cor:existence_fixedpoints}, we have existence of fixed points of $\Tss$ for all $\sigma \neq 0$. In view of Lemma \ref{lem:nice}, we obtain the following existence result.

\begin{cor} \label{cor:exist}
For any $\lambda > 0$, the map $\Ts_{\lambda} = \Ts_{\lambda}^{(0)} : X_\alpha \setminus \{ 0 \} \to X_\alpha$ has a fixed point.
\end{cor}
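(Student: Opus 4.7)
The plan is simply to combine the existence result for the regularized problem (Corollary \ref{cor:existence_fixedpoints}) with the limiting procedure established in Lemma \ref{lem:nice}. All the substantive analytic work has already been done in those two results; what remains is to assemble the pieces.

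First, I would fix $\lambda > 0$ and choose an arbitrary sequence $(\sigma_n)_{n \geq 1} \subset \R \setminus \{0\}$ with $\sigma_n \to 0$ as $n \to \infty$. By Corollary \ref{cor:existence_fixedpoints}, for each $n \geq 1$ the map $\Ts_\lambda^{(\sigma_n)} : X_\alpha \to X_\alpha$ admits a fixed point $v_n \in X_\alpha$, i.\,e.,
$$
v_n = \Ts_\lambda^{(\sigma_n)}[v_n].
$$

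Next, I would invoke Lemma \ref{lem:nice} directly with this sequence $(\sigma_n, v_n)$. That lemma tells us that, after passing to a subsequence, $v_n \to v$ in $X_\alpha$ for some $v \not\equiv 0$ which is a fixed point of $\Ts_\lambda^{(0)} = \Ts_\lambda$. In particular, $v \in X_\alpha \setminus \{0\}$, which yields the required fixed point of $\Ts_\lambda : X_\alpha \setminus \{0\} \to X_\alpha$.

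The main obstacle in proving this corollary has already been overcome in Lemma \ref{lem:nice}: namely, the passage to the limit $\sigma \to 0$ requires both the uniform $L^p$-bound from Lemma \ref{lem:HLS} (whose proof uses the reverse Hardy--Littlewood--Sobolev inequality combined with a Pohozaev identity) and the nontriviality of the limit, which is secured by the fact that $v(0) = \lambda \sqrt{K(0)} > 0$ transmits to the limit. Absent these ingredients, the simple a-priori bound of Lemma \ref{lem:apriori_easy} degenerates as $\sigma \to 0$, and one could a priori have $v_n \rightharpoonup 0$. Since these two key points are already established upstream, the corollary itself reduces to the two-line assembly above.
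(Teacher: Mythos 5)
Your proposal is correct and matches the paper's argument exactly: the paper also deduces Corollary \ref{cor:exist} by combining the existence of fixed points of $\Tss$ for $\sigma \neq 0$ (Corollary \ref{cor:existence_fixedpoints}) with the limiting result of Lemma \ref{lem:nice} along a sequence $\sigma_n \to 0$. Nothing further is needed.
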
 

\subsection{Proof of Theorem \ref{thm:existence}} 
Take $K(x) \equiv 1$ and let $\lambda > 0$ be given. By Corollary \ref{cor:exist}, there exists $v \in X_\alpha \setminus \{ 0 \}$ such that $v = \mathbf{T}_{\lambda}[v]$. Hence $v(x) > 0$ is a $C^1$-solution of 
$$
\pt_x v = -\frac{1}{2} H_\alpha(v^2) v
$$
with $v(0) = \lambda \sqrt{K(0)} = \lambda$. In view of Lemma \ref{lem:reformulation} and Proposition \ref{prop:integral_form}, we obtain that the even function $u := \log(v^2) \in L_s(\R)$ solves $\Ds u = e^u$ with $e^u = v^2 \in L^1(\R)$. \hfill $\qed$

%%%%%%%%%%%%%%%%%%%%%%%%%%%%%%%%
\section{Uniqueness of Fixed Points} \label{sec:unique}

We now turn to the uniqueness of fixed points of $\Tss$ for given $\lambda >0$ and $\sigma \in \R$. Again, we start with the easier case when $\sigma \neq 0$ holds and we first show uniqueness in the regime of small $\lambda >0$.

\begin{lem} \label{lem:unique_small}
Let $\sigma \neq 0$ be given. Then there exists $\lambda_0 = \lambda_0(\sigma, \alpha, \|K \|_{L^\infty}) > 0$ sufficiently small such that the fixed points of $\Tss : X_\alpha \to X_\alpha$ are unique for $\lambda \in (0, \lambda_0]$. 
\end{lem}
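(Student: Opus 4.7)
The plan is a direct contraction-mapping argument, leveraging the fact that the a priori bound of Lemma \ref{lem:apriori_easy} already traps every fixed point of $\Tss$ in an $X_\alpha$-ball of radius proportional to $\lambda$. Concretely, applying Lemma \ref{lem:apriori_easy} with $\beta = 1$, every fixed point $v$ of $\Tss$ lies in $B_R := \{ v \in X_\alpha : \| v \|_{X^\alpha} \leq R \}$ with $R = C_1 \lambda$, where $C_1 = C_1(\sigma, \alpha, \| K \|_{L^\infty}) > 0$. The goal is to show that, once $\lambda$ is small enough, the restriction of $\Tss$ to $B_R$ is a strict contraction in the $X_\alpha$-norm; uniqueness of fixed points follows immediately, since any two fixed points $v_1, v_2$ automatically lie in $B_R$ and the fixed-point identity then forces $v_1 = v_2$. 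We may also assume $R \leq 1$ by shrinking $\lambda_0$.

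To estimate the Lipschitz constant, I would write $\Tss[v_i](x) = \lambda \sqrt{K(x)} \, e^{-\frac{1}{2}\sigma^2 x^2} e^{w_{v_i}(x)}$ with
$$w_v(x) = -\frac{1}{2} \int_0^x H_\alpha(v^2)(y) \, dy = -c_\alpha \int_\R \bigl( |x-y|^{2\alpha} - |y|^{2\alpha} \bigr) v(y)^2 \, dy,$$
as given by Proposition \ref{prop:riesz_integral}. The elementary bound $\bigl||x-y|^{2\alpha}-|y|^{2\alpha}\bigr| \leq |x|^{2\alpha}$ together with Cauchy--Schwarz yields the two pointwise estimates
$$|w_{v_i}(x)| \leq c_\alpha R^2 |x|^{2\alpha}, \qquad |w_{v_1}(x) - w_{v_2}(x)| \leq 2 c_\alpha R |x|^{2\alpha} \, \|v_1 - v_2\|_{X^\alpha}.$$
Combining them with $|e^a - e^b| \leq e^{\max(a,b)} |a-b|$ produces
$$|\Tss[v_1](x) - \Tss[v_2](x)| \leq C \lambda R \, \| v_1 - v_2 \|_{X^\alpha} \cdot G_{\sigma, R}(x),$$
where $G_{\sigma, R}(x) := |x|^{2\alpha} \, e^{-\frac{1}{2}\sigma^2 x^2 + c_\alpha R^2 |x|^{2\alpha}}$. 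Since $2\alpha < 1$ and $\sigma \neq 0$, the Gaussian factor dominates both the weight and the stretched-exponential $e^{c_\alpha R^2 |x|^{2\alpha}}$, so uniformly for $R \leq 1$ one has $\| G_{\sigma, R} \|_{X^\alpha} \leq C_2(\sigma, \alpha)$, upgrading the pointwise bound to
$$\| \Tss[v_1] - \Tss[v_2] \|_{X^\alpha} \leq C_3(\sigma, \alpha, \| K \|_{L^\infty}) \, \lambda R \, \| v_1 - v_2 \|_{X^\alpha}.$$

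Inserting the fixed-point bound $R = C_1 \lambda$ this becomes $\| v_1 - v_2 \|_{X^\alpha} \leq C_1 C_3 \lambda^2 \, \| v_1 - v_2 \|_{X^\alpha}$, and choosing $\lambda_0$ so small that $C_1 \lambda_0 \leq 1$ and $C_1 C_3 \lambda_0^2 < 1$ completes the proof. The only point that requires genuine care is the uniform $X^\alpha$-bound on $G_{\sigma, R}$, which demands control of both an $L^\infty$-piece and an $|x|^{2\alpha}$-weighted $L^2$-piece; it is precisely here that the hypothesis $\sigma \neq 0$ is essential, and consequently this argument will not survive the limit $\sigma \to 0$, in line with the remark preceding Lemma \ref{lem:HLS}.
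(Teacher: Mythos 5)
Your argument is correct, and at the strategic level it coincides with the paper's: both proofs use the a priori bound of Lemma \ref{lem:apriori_easy} to trap every fixed point of $\Tss$ in a small ball whose radius shrinks with $\lambda$, and then conclude uniqueness from a contraction estimate on that ball. The difference lies in how the contraction is obtained. The paper invokes the Fr\'echet differentiability of $\Tss$ (Proposition \ref{prop:frechet}) together with the continuity of $D_v \Tss[v]$ and the fact that the derivative vanishes at $v=0$, and then applies the mean-value inequality in Banach spaces to get a Lipschitz constant $\tfrac12$ on a ball of $\lambda$-independent radius $r_0$, shrinking $\lambda_0$ afterwards so that the a priori bound places all fixed points inside that ball. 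You instead prove the Lipschitz bound by hand: writing $\Tss[v](x)=\lambda\sqrt{K(x)}\,e^{-\frac12\sigma^2x^2}e^{w_v(x)}$, using the elementary kernel estimate $\bigl||x-y|^{2\alpha}-|y|^{2\alpha}\bigr|\le|x|^{2\alpha}$, Cauchy--Schwarz for $w_{v_1}-w_{v_2}$, the mean-value inequality for the exponential, and the Gaussian factor (this is where $\sigma\neq 0$ enters) to absorb both the weight $|x|^{2\alpha}$ and the stretched exponential $e^{c_\alpha R^2|x|^{2\alpha}}$ into a uniform $X^\alpha$-bound. Your estimates are accurate (up to harmless constants, e.g.\ the factor $\tfrac12$ in the identity $w_v(x)=-\tfrac{c_\alpha}{2}\int_\R(|x-y|^{2\alpha}-|y|^{2\alpha})v(y)^2\,dy$), and combining them with $R=C_1\lambda$ gives a contraction factor of order $\lambda^2$, which suffices. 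What your route buys is self-containedness: it does not rely on the appendix's differentiability result, and in particular it sidesteps any delicacy about the behaviour of $D_v\Tss$ near $v=0$; what the paper's route buys is economy, since the Fr\'echet differentiability and nondegeneracy machinery is needed anyway for the implicit-function-theorem continuation in Lemma \ref{lem:unique}, so the small-$\lambda$ uniqueness comes almost for free there. Your closing observation that the argument degenerates as $\sigma\to 0$ is also consistent with the paper's treatment, which handles $\sigma=0$ by an entirely separate limiting argument.
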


\begin{proof}
Let $\sigma \neq 0$ be fixed. In Section \ref{sec:aux} we prove that  $\Tss : X_\alpha \to X_\alpha$ is Fr\'echet differentiable and that its derivative $D_v \Tss[v] \in \mathcal{L}(X_\alpha)$ depends continuously on $(\lambda, v) \in (0,\infty) \times X_\alpha$. Since $D_v \Tss[0] = 0$ and by continuity, we deduce that
$$
\| D_v \Tss[v] \|_{\mathcal{L}(X_\alpha)} \leq \frac{1}{2} \quad \mbox{if} \quad  \mbox{$\| v \|_{X^\alpha} \leq r_0$ and $0 < \lambda \leq \lambda_0$}
$$
with some sufficiently small constants $r_0 > 0$ and $\lambda_0 > 0$.
From the mean-value estimate due to Taylor's theorem in Banach spaces, we deduce
\be \label{ineq:contraction}
\| \Tss[v] - \Tss[\tilde{v}] \|_{X^\alpha} \leq \frac{1}{2} \|v - \tilde{v} \|_{X^\alpha} \quad \mbox{if} \quad \mbox{$\| v \|_{X^\alpha}, \| \tilde{v} \|_{X^\alpha} \leq r_0$ and $0 < \lambda \leq \lambda_0$}.
\ee

Now, let $M=M(\sigma, \alpha) > 0$ be the constant from Lemma \ref{lem:apriori_easy}. By choosing $\lambda_0 > 0$ even smaller, we can ensure that $\lambda_0 \|K \|_{L^\infty}^{1/2} M \leq r_0$ holds. Thus if $\lambda \in (0, \lambda_0]$ and $v, \tilde{v} \in X_\alpha$ are fixed points of $\Tss$, we deduce that $\| v \|_{X^\alpha} \leq r_0$ and $\| \tilde{v} \|_{X^\alpha} \leq r_0$ by Lemma \ref{lem:apriori_easy}. The contraction estimate \eqref{ineq:contraction} now implies that $v = \tilde{v}$.
\end{proof}

Our next goal is to extend the previous uniqueness result to all $\lambda > 0$ by means of a global continuation argument using the implicit function theorem. To this end, we introduce the following map
$$
\Fs : (0,\infty) \times \R \times (X_\alpha \setminus \{ 0 \}) \to X_\alpha, \quad (\lambda, \sigma, v) \mapsto \Fs(\lambda, \sigma, v) := v - \Tss[v].
$$
Obviously, we have the equivalence
$$
\Fs(\lambda, \sigma, v)=0 \quad \Longleftrightarrow \quad v= \Tss[v].
$$
From the known properties of $\Tss$, we directly deduce that $\Fs$ is Fr\'echet differentiable with respect to $v$ and its derivative is given by
$$
D_v \Fs(\lambda, \sigma, v) = \id - D_v \Tss[v]
$$
and it depends continuously on $(\lambda, \sigma, v)$. We note that $D_v \Fs(\lambda, \sigma, v) : X_\alpha \to X_\alpha$ is Fredholm, since $D_v \Tss[v]$ is a compact linear operator. The following general nondegeneracy result for the linearization around fixed points is essential.

\begin{lem}[Nondegeneracy of Fixed Points] \label{lem:nondeg}
Let $\sigma \in \R$ and $\lambda > 0$. Suppose that $v \in X_\alpha$ satisfies $\Fs(\lambda, \sigma, v) = 0$. Then $D_v \Fs(\lambda, \sigma, v) : X_\alpha \to X_\alpha$ is invertible. 
\end{lem}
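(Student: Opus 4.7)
The plan is to exploit the Fredholm structure of $D_v\Fs(\lambda,\sigma,v)=\id-D_v\Tss[v]$ --- which follows from the compactness of $D_v\Tss[v]$ --- and reduce invertibility to the triviality of the kernel. A direct differentiation of $\Tss$ at a fixed point $v=\Tss[v]$ yields
$$D_v\Tss[v]\cdot h \;=\; -\,v(x)\int_0^x H_\alpha(vh)(y)\,dy,$$
so the task is to show that any $h\in X_\alpha$ satisfying $h(x)=-v(x)\int_0^x H_\alpha(vh)(y)\,dy$ must vanish identically.

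The first move is to recast this kernel equation as a linearized fractional Gelfand equation. Introduce
$$\psi(x):=-2\int_0^x H_\alpha(vh)(y)\,dy,\qquad\text{so that}\qquad h=\tfrac12\,v\psi.$$
By Proposition~\ref{prop:riesz_integral},
$$\psi(x) \;=\; -2c_\alpha\int_\R\bigl(|x-y|^{2\alpha}-|y|^{2\alpha}\bigr)v(y)h(y)\,dy,$$
which shows immediately that $\psi$ is even (because $vh$ is), satisfies $\psi(0)=0$, lies in $L_s(\R)$, and --- by Proposition~\ref{prop:integral_form} --- is an integral solution of $\Ds\psi=2vh=v^2\psi$ on $\R$. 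Equivalently, $\phi:=v\psi\in L^2(\R)$ is a fixed point of the Birman--Schwinger-type operator $v(-\Delta)^{-s}v$. Invertibility of $D_v\Fs(\lambda,\sigma,v)$ is thus reduced to the \emph{even nondegeneracy statement}: no nonzero even $\psi\in L_s(\R)$ with $\psi(0)=0$ solves $\Ds\psi=v^2\psi$.

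This even nondegeneracy is the crux of the proof and, I expect, the main obstacle. Following the strategy outlined in Section~\ref{sec:strategy}, I would exploit evenness to restrict to the half-line $x\ge 0$ and then pass to the Laplace transform, where $(-\Delta)^{-s}$ acquires an explicit multiplicative representation; this generalises the approach of \cite{AhLe-22} away from the conformal case $s=\tfrac12$ and avoids the $s$-harmonic extension. A monotonicity-type identity for the pair $(v,\psi)$ would then be derived directly in the Laplace variable, and a Perron--Frobenius/positivity argument based on the sign properties of $H_\alpha$ against symmetric-decreasing functions (Lemma~\ref{lem:H_a}) is invoked to preclude sign-changing even solutions with a node at the origin. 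In the flat case $K\equiv 1$, $\sigma=0$, the only candidate even kernel element is the scaling mode $R_u=x\pt_x u+2s$, and since $R_u(0)=2s\neq 0$ the constraint $\psi(0)=0$ --- built into the construction of $\psi$ --- is precisely what kills it; in the general Assumption~(A) setting the same vanishing mechanism applies, since translation produces only odd perturbations and no further even symmetry is available.

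With the even nondegeneracy established, $\psi\equiv 0$ forces $h=\tfrac12 v\psi\equiv 0$, so $\ker D_v\Fs(\lambda,\sigma,v)=\{0\}$ and Fredholm theory delivers the invertibility of $D_v\Fs(\lambda,\sigma,v):X_\alpha\to X_\alpha$.
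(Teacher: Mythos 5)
Your reduction is set up correctly and matches the paper's framework: Fredholm theory reduces invertibility to triviality of $\ker(\id - D_v\Tss[v])$, the kernel equation $h(x)=-v(x)\int_0^x H_\alpha(vh)(y)\,dy$ is the right one (Proposition~\ref{prop:frechet}), and your reformulation $h=\tfrac12 v\psi$ with $\psi$ even, $\psi(0)=0$, $\Ds\psi=v^2\psi$ is consistent with Propositions~\ref{prop:integral_form} and~\ref{prop:riesz_integral}. But the decisive step --- showing that this kernel equation forces $h\equiv 0$ --- is not actually carried out; it is only announced (``a monotonicity-type identity \emph{would then be derived}'', ``a Perron--Frobenius/positivity argument \emph{is invoked}''). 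The paper's proof hinges on a concrete two-sided estimate for the single quantity
\begin{equation*}
I[f]=\int_0^\infty (vf)(x)\,H_\alpha(vf)(x)\,dx .
\end{equation*}
On one side, using evenness of $w=vf$ and the identity $\frac{1}{(x+y)^{1-2\alpha}}=\frac{1}{\Gamma(2\alpha+1)}\int_0^\infty t^{2\alpha}e^{-t(x+y)}\,dt$, one gets $I[f]=\frac{d_\alpha}{\Gamma(2\alpha+1)}\int_0^\infty t^{2\alpha}|\mathsf{L}(vf)(t)|^2\,dt\ge 0$, with equality iff $vf\equiv 0$ by injectivity of the Laplace transform (Lemma~\ref{lem:Laplace}). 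On the other side, writing $vf=v^2\psi$ and integrating by parts using $\psi(0)=0$ and the decay of $v$, one finds $I[f]=\tfrac12\int_0^\infty \pt_x\{v^2\}\,\psi^2\,dx\le 0$ because the fixed point $v=v^*$ is symmetric-decreasing. Nothing in your proposal produces this (or any substitute) sign-definite identity, so the kernel has not been shown to be trivial.

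Moreover, your fallback argument is circular and does not apply in the generality required. You argue that ``the only candidate even kernel element is the scaling mode $R_u=x\pt_x u+2s$'' and that $\psi(0)=0$ kills it; but the statement that the even kernel is spanned by the scaling mode is precisely (the even half of) Theorem~\ref{thm:nondeg}, which the paper proves \emph{using} Lemma~\ref{lem:nondeg}, not the other way around. In addition, Lemma~\ref{lem:nondeg} must hold for all $\sigma\in\R$ and all admissible nonconstant $K$, where there is no scaling symmetry at all and hence no ``scaling mode'' to compare against, so this heuristic cannot even be formulated there. Finally, the Perron--Frobenius argument you invoke is used in the paper only for the \emph{odd} part of Theorem~\ref{thm:nondeg}; the positivity of $H_\alpha$ on symmetric-decreasing functions by itself does not rule out even kernel elements vanishing at the origin. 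To complete your proof you would need to supply the Laplace-transform positivity of $I[f]$ together with the monotonicity-based nonpositivity, or an equivalent mechanism.
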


\begin{proof}
Let $(\lambda, \sigma, v)$ be as above. By standard Fredholm theory, we know that $D_v \Fs(\lambda, \sigma, v)$ is invertible if (and only if) the compact operator $\KK := D_v \Tss[v] \in \mathcal{L}(X_\alpha)$ satisfies
\be \label{eq:ker}
\ker (\id - \KK ) = \{ 0 \}.
\ee
To show this, we argue by contradiction and we suppose that there exists some $f \in X_\alpha \setminus \{ 0 \}$ with  $\KK f = f$. Since $v = \Tss[v]$ is a fixed point, we deduce from Proposition \ref{prop:frechet} that $f$ satisfies the equation
\be \label{eq:f_eq}
f(x) = -v(x) \int_0^x H_\alpha(vf)(y) \, dy.
\ee
We claim that this identity will imply that $f \equiv 0$, completing the proof of \eqref{eq:ker}.

Indeed, let us consider the quantity
\be
I[f] := \int_0^\infty (vf)(x) H_\alpha(vf)(x) \, dx .
\ee
Note that this integral is absolutely convergent with
$$
|I[f]| \lesssim \int_0^\infty \int_{\R} \frac{|v(x)f(x)||v(y) f(y)|}{|x-y|^{1-2 \alpha}} \, dx \, dy \lesssim \| v f \|_{L^{\frac{2}{1+2 \alpha}}}^2 < +\infty,
$$
using the Hardy--Littlewood--Sobolev inequality and that $vf \in L^p(\R)$ for all $1 \leq p \leq \infty$ whenever $f,v \in X_\alpha$. We claim that 
\be \label{ineq:I_one}
I[f] \geq 0 \quad \mbox{with} \quad \mbox{$I[f]=0$ if and only if $f =0$}.
\ee 
To see this, let us set 
$$
w:= vf.
$$ 
From the explicit form of the integral kernel of $H_\alpha$ we find (with some positive constant $d_\alpha > 0$) that
\begin{align*}
I[f] & = d_\alpha \int_0^\infty w(x) \left ( \int_{\R} \frac{\sgn(x-y)}{|x-y|^{1-2 \alpha}} w(y) \, dy \right ) dx \\
& = d_\alpha \int_0^\infty w(x) \left ( \int_{-\infty}^0 \frac{\sgn(x-y)}{|x-y|^{1-2 \alpha}} w(y) \, dy + \int_0^\infty \frac{\sgn(x-y)}{|x-y|^{1-2 \alpha}} w(y) \, dy \right )  dx \\
& = d_\alpha \int_0^\infty w(x) \int_{-\infty}^0 \frac{\sgn(x-y)}{|x-y|^{1-2 \alpha}} w(y) \, dy \, dx = d_\alpha \int_0^\infty \int_0^\infty \frac{w(x) w(y)}{(x+y)^{1-2 \alpha}} \, dx \, dy.
\end{align*}
Here we used that the second integral in the second line vanishes by skew-symmetry. In the last step, we used that $w(-y) = w(y)$ is an even function. Next, we recall that
$$
\frac{1}{(x+y)^{1-2 \alpha}} = \frac{1}{\Gamma(2 \alpha+1)} \int_0^\infty t^{2 \alpha} e^{-t(x+y)} \, dt
$$
for $x,y >0$. This leads us to
$$
I[f] = d_\alpha \int_0^\infty \int_0^\infty \frac{w(x) w(y)}{(x+y)^{1-2 \alpha}} \, dx \, dy = \frac{d_\alpha}{\Gamma(2 \alpha +1)} \int_0^\infty t^{2 \alpha} |\mathsf{L}w(t)|^2 \, dt \geq 0,
$$
where $\mathsf{L} w(t) = \int_0^\infty e^{-t s} w(s) \, ds$ denotes the (one-sided) Laplace transform of $w : [0,\infty) \to \R$. This shows that $I[f] \geq 0$ and clearly $I[0]=0$. Moreover, if $I[f]=0$ then $\mathsf{L}w(t)=0$ for a.\,e.~$t \geq 0$. Since $w=vf : [0,\infty) \to \R$ is easily seen to be continuous and bounded, we can apply well-known arguments for the Laplace transform (stated in Lemma \ref{lem:Laplace} below) to conclude that $w=vf=0$. By the strict positivity $v(x) > 0$, this implies that $f=0$.  This completes the proof of \eqref{ineq:I_one}. 

On the other hand, we claim that
\be \label{ineq:I_two}
I[f] \leq 0.
\ee
To see this, we mutliply \eqref{eq:f_eq} with $v(x) > 0$ yielding
$$
v(x) f(x) = v(x)^2 \psi(x),
$$
where we define the $C^1$-function
$$
\psi(x) = -\int_0^x H_\alpha(vf)(y) \, dy,
$$
which satisfies the estimate $|\psi(x)| \leq |x| \| H_\alpha(vf) \|_{L^\infty} \lesssim |x| \| v f \|_{L^1 \cap L^\infty} \lesssim |x| \| v \|_{X^\alpha} \| f \|_{X^\alpha}$ for all $x \in \R$. We have
\begin{align*}
I[f] & = \int_0^\infty (v f)(x) H_\alpha(vf)(x) \, dx = -\int_0^\infty v(x)^2 \psi(x) \pt_x \psi(x) \, dx = -\frac{1}{2} \int_0^\infty v(x)^2 \pt_x \left \{ \psi(x)^2 \right\} \, dx \\
& = -\frac{1}{2} v(0)^2 \psi(0)^2 + \frac{1}{2} \int_0^\infty \pt_x \{ v(x)^2 \} \psi(x)^2 \, dx = \frac{1}{2} \int_0^\infty \pt_x \left \{ v(x)^2 \right \} \psi(x)^2 \, dx,
\end{align*}
where we integrated by parts using that $\psi(0)=0$ and that $v(x)^2 \psi(x) \to 0$ as $x \to +\infty$ due to $v(x) \leq C e^{-d|x|^{2 \alpha}}$ with some $d>0$ and the fact that $\psi(x) = O(x)$ as $|x| \to +\infty$. Since $v=v^*$ is a symmetric-decreasing $C^1$-function, we see that $\pt_x(v(x)^2) \leq 0$ for all $x \geq 0$. Hence we conclude that \eqref{ineq:I_two} holds.

In view of \eqref{ineq:I_one} and \eqref{ineq:I_two}, we deduce that $\mathcal{K} f = f$ implies that $I[f]=0$ and hence $f = 0$. This proves \eqref{eq:ker} and completes the proof of Lemma \ref{lem:nondeg}.
\end{proof}

We are now ready to prove the following uniqueness result for fixed points of  $\Tss$.

\begin{lem} \label{lem:unique}
For all $\sigma \in \R$ and $\lambda >0$, we have uniqueness of fixed points for $\Tss$ in $X_\alpha$.
\end{lem}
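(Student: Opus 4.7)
The plan is to first establish uniqueness for all $\lambda > 0$ in the case $\sigma \neq 0$ by a global continuation in $\lambda$, and then to deduce the case $\sigma = 0$ via a continuation in $\sigma$. The four key ingredients are the small-$\lambda$ uniqueness (Lemma \ref{lem:unique_small}), the a priori bound (Lemma \ref{lem:apriori_easy}), the nondegeneracy of the linearization (Lemma \ref{lem:nondeg}), and the implicit function theorem applied to the $C^1$-map $\Fs(\lambda, \sigma, v) = v - \Tss[v]$, whose Fréchet derivative $D_v\Fs$ depends continuously on $(\lambda,\sigma,v)$.

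\textbf{Step 1 (case $\sigma \neq 0$).} Fix $\sigma \neq 0$ and suppose for contradiction that at some $\lambda^* > 0$ there exist two distinct fixed points $v_1, v_2 \in X_\alpha$ of $\Ts^{(\sigma)}_{\lambda^*}$. Since each $D_v\Fs(\lambda^*, \sigma, v_i)$ is invertible by Lemma \ref{lem:nondeg}, the implicit function theorem produces unique continuous branches $\lambda \mapsto v_i(\lambda) \in X_\alpha$ of fixed points in some neighborhood of $\lambda^*$. Let $I_i \subset (0, \infty)$ be the maximal open interval containing $\lambda^*$ to which this branch extends. Openness of $I_i$ is immediate from reapplying Lemma \ref{lem:nondeg} at each point along the branch. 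For closedness, let $\lambda_n \in I_i$ with $\lambda_n \to \lambda_\infty \in (0, \infty)$: the bound $\|v_i(\lambda_n)\|_{X^\alpha} \leq \lambda_n \|K\|_{L^\infty}^{1/2} M(\sigma, \alpha)$ from Lemma \ref{lem:apriori_easy} keeps the branch bounded, and the compactness of $\Tss$ (valid only because $\sigma \neq 0$) combined with the fixed-point identity $v_i(\lambda_n) = \Ts^{(\sigma)}_{\lambda_n}[v_i(\lambda_n)]$ and joint continuity in $(\lambda,v)$ yields a subsequential limit $v_i(\lambda_\infty)$ that is a fixed point at $\lambda_\infty$; a further invocation of Lemma \ref{lem:nondeg} then restarts the implicit function theorem across $\lambda_\infty$. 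Hence $I_i = (0, \infty)$, and both globally defined branches $v_1(\cdot), v_2(\cdot)$ still satisfy $v_1(\lambda^*) \neq v_2(\lambda^*)$. On the other hand, for every $\lambda \in (0, \lambda_0]$ with $\lambda_0$ from Lemma \ref{lem:unique_small} we must have $v_1(\lambda) = v_2(\lambda)$. The set $\{\lambda > 0 : v_1(\lambda) = v_2(\lambda)\}$ is closed by continuity and open by the local uniqueness coming from Lemma \ref{lem:nondeg} and the implicit function theorem; by connectedness of $(0, \infty)$ it therefore equals all of $(0, \infty)$, contradicting $v_1(\lambda^*) \neq v_2(\lambda^*)$.

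\textbf{Step 2 (case $\sigma = 0$).} Fix $\lambda > 0$ and suppose $v_1, v_2 \in X_\alpha \setminus \{0\}$ are distinct fixed points of $\Ts_\lambda = \Ts^{(0)}_\lambda$. Applying Lemma \ref{lem:nondeg} at each triple $(\lambda, 0, v_i)$ and using the continuous dependence of $D_v\Fs$ on $(\lambda, \sigma, v)$, the implicit function theorem now in the $\sigma$-direction produces continuous branches $\sigma \mapsto v_i(\sigma) \in X_\alpha \setminus \{0\}$ of fixed points of $\Ts^{(\sigma)}_\lambda$ for $|\sigma| < \delta$, with $v_i(0) = v_i$. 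By continuity these branches stay distinct for $|\sigma|$ sufficiently small, so picking any $\sigma \neq 0$ in this range yields two distinct fixed points of $\Tss$, contradicting Step 1.

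The principal obstacle lies in Step 1, specifically in verifying that the continuation interval $I_i$ exhausts $(0, \infty)$. This requires weaving together the linear-in-$\lambda$ a priori bound, the compactness of $\Tss$ that is available only for $\sigma \neq 0$, and the nondegeneracy needed to restart the implicit function theorem at the boundary of $I_i$. Once the global branches are in place, the propagation of equality from the small-$\lambda$ regime is a routine connectedness argument, and the $\sigma = 0$ case reduces to a transverse application of the implicit function theorem.
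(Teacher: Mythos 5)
Your proposal is correct and follows essentially the same route as the paper: continuation in $\lambda$ for $\sigma \neq 0$ (combining the nondegeneracy of Lemma \ref{lem:nondeg}, the a priori bound of Lemma \ref{lem:apriori_easy}, and the small-$\lambda$ uniqueness of Lemma \ref{lem:unique_small}), followed by a continuation in $\sigma$ to transfer uniqueness to $\sigma = 0$. The only differences are presentational: you phrase the contradiction via an open--closed coincidence-set argument and spell out the closedness of the maximal continuation interval using compactness of $\Tss$, whereas the paper states the global branches more tersely and argues that the two branches can never intersect yet must merge in the small-$\lambda$ regime.
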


\begin{proof}
We discuss the cases $\sigma \neq 0$ and $\sigma=0$ separately as follows.

\medskip
\textbf{Case $\sigma \neq 0$.} Assume that $\sigma \neq 0$ is fixed and let $\lambda_* > 0$ be given. We argue by contradiction and we suppose that $v_*, \tilde{v}_* \in X_\alpha$ are two fixed points of $\Ts_{\lambda_*}^{(\sigma)}$ with $v_* \neq \tilde{v}_*$. Since $\Fs(\lambda_*, \sigma, v_*)=\Fs(\lambda_*, \sigma, \tilde{v}_*)=0$, we can apply the implicit function theorem (using the nondegeneracy result in Lemma \ref{lem:nondeg}) together with the a-priori bound in Lemma \ref{lem:apriori_easy} to conclude that there exist two continuous branches $v(\lambda), \tilde{v}(\lambda) \in C((0, \lambda_*]; X_\alpha)$ with
$$
v(\lambda) = \Ts_{\lambda}^{(\sigma)}[v(\lambda)] \quad \mbox{and} \quad \tilde{v}(\lambda) = \Ts_{\lambda}^{(\sigma)}[\tilde{v}(\lambda)] \quad \mbox{for $\lambda \in (0, \lambda_*]$}
$$
with $v(\lambda_*) = v_*$ and $\tilde{v}(\lambda_*) = \tilde{v}_*$. By local uniqueness thanks to the implicit function theorem, these branches can never intersect, i.\,e., we have $v(\lambda) \neq \tilde{v}(\lambda)$ for $\lambda \in (0, \lambda_*]$. However, by Lemma \ref{lem:unique_small}, there exists a sufficiently small constant $\lambda_0 > 0$ such that $v(\lambda) = \tilde{v}(\lambda)$ for $\lambda \in (0, \lambda_0]$. This is a contradiction and completes the proof for the case $\sigma \neq 0$.

\medskip
\textbf{Case $\sigma =0$.} Let $\lambda >0$ be given and suppose $v, \tilde{v} \in X_\alpha$ are two fixed points of $\Ts_\lambda=\Ts_{\lambda}^{(0)}$ such that $v \neq \tilde{v}$. By the implicit function theorem and the nondegeneracy result in Lemma \ref{lem:nondeg}, we can construct two continuous branches $v(\sigma), \tilde{v}(\sigma) \in C((-\eps, \eps); X_\alpha)$ with some $\eps > 0$ such that
$$
v(\sigma) = \Ts_{\lambda}^{(\sigma)}[v(\sigma)] \quad \mbox{and} \quad \tilde{v}(\sigma) = \Ts_{\lambda}^{(\sigma)}[\tilde{v}(\sigma)] \quad \mbox{for $\sigma \in (-\eps,\eps)$},
$$  
with $v(0) = v$ and $\tilde{v}(0) = \tilde{v}$. Now let $\sigma_n \neq 0$ be a sequence such that $\sigma_n \to 0$. By the uniqueness result shown above for $\sigma \neq 0$, we conclude that $v(\sigma_n) = \tilde{v}(\sigma_n)$ for all $n \geq 1$. On the other hand, since $v(\sigma_n) \to v$ and $\tilde{v}(\sigma_n) \to \tilde{v}$ in $X_\alpha$, we deduce that $v \neq \tilde{v}$ cannot hold. 

The proof of Lemma \ref{lem:unique} is now complete.
\end{proof}

\subsection{Proof of Theorem \ref{thm:unique}} Let  $s \in (\frac 1 2, 1)$ be given. Suppose $u, \tilde{u} \in L_s(\R)$ are two solutions of \eqref{eq:Gelf2}. By Theorem \ref{thm:reg_sym} and the symmetries given in \eqref{eq:symmetry}, we can assume that $u, \tilde{u}$ are even functions with $u(0)=\tilde{u}(0)=0$. From the discussion in Section \ref{sec:setup}, we deduce that $v := \sqrt{e^u}$ and $\tilde{v} := \sqrt{e^{\tilde{u}}}$ are both fixed points of the map $\Ts_{\lambda} : X_\alpha \to X_\alpha$ with parameter $\lambda=v(0)=\sqrt{e^{u(0)}} = \tilde{v}(0) = \sqrt{e^{\tilde{u}(0)}} = 1$. By Lemma \ref{lem:unique}, we deduce that $v \equiv \tilde{v}$ and hence $u \equiv \tilde{u}$.

In summary, we have shown that any solution $u \in L_s(\R)$ of \eqref{eq:Gelf2} satisfies $u=Q_s$ (after translation and scaling), where $Q_s \in L_s(\R)$ is the unique even solution of \eqref{eq:Gelf2} with $Q_s(0)=1$. The proof of Theorem \ref{thm:unique} is now complete. \hfill $\qed$

\section{Finite Morse Index and Nondegeneracy}

\subsection{Proof of Proposition \ref{prop:morse}}

Let $s \in (\frac 1 2, 1)$ and suppose that $u \in L_s(\R)$ solves $\Ds u = e^u$ with $e^u \in L^1(\R)$. Let $\LL = \Ds - e^u$ denote the linearized operator viewed as a quadratic form on $C^\infty_c(\R)$. Recalling Theorem \ref{thm:non_existence}, we know that its Morse index satisfies $n_-(\LL) \geq 1$.

We claim that $n_-(\LL) < +\infty$ holds. [In fact, we obtain a quantitative bound below.] From the decay estimate in Lemma \ref{lem:decay} and the fact that $u$ is continuous (and thus locally bounded), it follows that $|u(x)| \leq C e^{-a|x|^{2 \alpha}}$ for all $x \in \R$ with some constants $a> 0$ and $C > 0$. Let us define $V := e^{u} \in L^\infty(\R) \cap L^1(\R; |x|^{2 \alpha} \, dx)$ and consider the self-adjoint Schr\"odinger operator
$$
H := \Ds - V    
$$
acting on $L^2(\R)$ with operator domain $H^{2s}(\R)$. From \cite{BrFaGr-24}[Theorem 1.1] we deduce that its number of strictly negative eigenvalues $N_{<0}(H)$ satisfies the bound
$$
N_{<0}(H) \leq C_s ( \| |x|^{2 \alpha} V \|_{L^1} + 1) < +\infty
$$
with some constant $C_s > 0$. We claim that the Morse index satisfies 
$$n_-(\LL) \leq N_{<0}(H).$$
[In fact, we can show that we must have equality here, but we only need the upper bound.] We argue by contradiction and suppose that $n_-(\LL) >N_{<0}(H)$ holds. Thus there exist linearly independent functions $\phi_1, \ldots, \phi_{N+1} \in C^\infty_c(\R)$ with  $N = N_{>0}(H)$ such that $\LL < 0$ is negative definite on $V = \mathrm{span} \{ \phi_1, \ldots, \phi_{N+1} \}$. But by the min-max principle, this implies that $H$ has at least $N+1$ negative eigenvalues. But this is a contradiction.

Next, we show that the fact that $n_-(\LL)$ is finite implies that $u$ is stable outside some compact subset $\KK \subset \R$.  Indeed, let $n= n_-(\LL) \geq 1$  and take $V = \mathrm{span} \{ \phi_1, \ldots, \phi_n \} \subset \CC^\infty_c(\R)$ such that $\langle \phi, \LL \phi \rangle < 0$ for all $\phi \in V \setminus \{ 0 \}$. By the maximality of $n=\dim V$, it directly follows that $\langle \psi, \LL \psi \rangle \geq 0$ for any $\psi \in \CC^\infty_c(\R \setminus \KK)$ with the compact set $\KK = \bigcup_{i=1}^n \mathrm{supp}(\phi_i)$.

Finally, we assume that $u \in L_s(\R)$ solves $\Ds u = e^u$ under the weaker assumption $e^u \in \Lloc^1(\R)$. Suppose that $u$ is stable outside some compact set $\KK \subset \R$. Using the Gagliardo seminorm 
$$
[\phi]_{\dot{H}^s}^2 = \frac{C_s}{2} \int_{\R} \! \int_\R \frac{|\phi(x)-\phi(y)|^2}{|x-y|^{1+2s} } \, dx \, dy  = \int_\R \phi \Ds \phi \, dx ,
$$
 we have
$$
[\phi]_{\dot{H}^s}^2 \geq \int_{\R} e^u \phi^2 \, dx \quad \mbox{for all $\phi \in C^\infty_c(\R \setminus \KK)$}.
$$ 
Let $R>0$ now be sufficiently large such that $\KK \subseteq \ov{B}_R = \{ |x| \leq R \}$. Next, we take a  function $\chi : \R \to \R$ with $\chi' \in L^\infty(\R)$ such that $0 \leq \chi \leq 1$ with the properties $\chi(x) \equiv 0$  for $|x| \leq R$ and $\chi(x) \equiv 1$ for $|x| \geq 2 R$. Furthermore, let $\phi \in \CC^\infty_c(\R)$ with $0 \leq \phi \leq 1$ and $\phi(x) \equiv 1$ for $|x| \leq 1$ and $\phi(x) \equiv 0$ for $|x| \geq 2$. For $k \geq 1$, we set $\phi_k(x) := \phi(x/k)$ and we consider the sequence of test functions $\psi_k \in \CC^\infty_c(\R \setminus \ov{B}_R) \subseteq \CC^\infty_c(\R \setminus \KK)$ given by
$\psi_k(x) := \chi(x) \phi_k(x)$ for $k \geq 1$. Applying an elementary fractional Leibniz type estimate in Lemma \ref{lem:leibniz}, we find
$$
[\psi_k]_{\dot{H}^s} \leq C + [\phi_k]_{\dot{H}^s} = C + k^{\frac{1}{2}-s} [\phi]_{\dot{H}^s} \leq K
$$
with some constant $K>0$ independent of $k \geq 1$. By monotone convergence and the stability of $u$ outside the compact set $\KK \subset \ov{B}_R$, we deduce that
$$
\int_{|x| \geq 2R} \eu^u \, dx = \lim_{k \to \infty} \int_{|x| \geq 2R} \eu^u \psi_k^2 \, d x \leq \sup_{k \geq 1} \, [\psi_k]_{\dot{H}^s}^2 < +\infty \, .
$$ 
Since  $\int_{|x| \leq 2R} \eu^u \, dx < +\infty$ by assumption, we obtain that $\eu^u \in L^1(\R)$ holds.

This completes the proof of Proposition \ref{prop:morse}. \hfill $\qed$

\subsection{Proof of Theorem \ref{thm:nondeg} (Nondegeneracy)}

Let $s \in (\frac 1 2,1)$ and assume that $u \in L_s(\R)$ solves $\Ds u = e^u$ with $e^u \in L^1(\R)$. By translation and scaling symmetry, it suffices to consider the case
$$
u(x) = Q_s(x) 
$$ 
with the unique smooth, even and monotone-decreasing function $Q_s$ from Theorem \ref{thm:unique}. 

Let $\LL = \Ds - e^u$ denote the linearized operator acting on 
$$
W_s = \{ \psi \in L_s(\R) : \mbox{$\psi(x) = O(|x|^{2 \alpha})$ as $|x| \to \infty$} \}.
$$ 
We recall that $\LL T = \LL R = 0$ with the smooth functions
$$
R = \pt_x u, \quad T = x \pt_x u + 2s.
$$
Since $R= \pt_x u= -H_\alpha(v^2)$, we conclude that $R(x) = o(1)$ as $|x| \to \infty$ and hence $R \in W_s$. Furthermore, we see that $T(x) = o(x)$ as $|x| \to +\infty$. Applying Proposition \ref{prop:integral_form} with $\rho = e^u T \in L^1(\R)$, we see that $T(x) = O(|x|^{2 \alpha})$ as $|x| \to \infty$, which implies that $R \in W_s$.

It remains to show that 
$$
\mathrm{ker} \, \LL = \mathrm{span} \{ R, T \}.
$$
Indeed, suppose that $\psi \in W_s$  solves $\LL \psi = 0$, which we can write as 
$$
\Ds \psi = v^2 \psi
$$
with $v= \sqrt{e^u}$. We decompose $\psi = \psi_e + \psi_o$ into even and odd parts, where it is easy to see that $\psi_{\#} \in L_s(\R)$ with $\psi_{\#}(x) = O(|x|^{2 \alpha})$ as $|x| \to \infty$, where $\#$ either stands for $e$ or $o$. Since $v^2=e^{u}$ is even, we obtain that
$$
\Ds \psi_\# = v^2 \psi_\#.
$$ 
Note also that $v^2 \psi_\# \in L^1(\R) \cap L^\infty(\R)$ due to the pointwise bound $v^2(x) \leq C e^{-a |x|^{2 \alpha}}$ with some $a >0$. By Proposition \ref{prop:integral_form} (where $B=0$ holds due to the assumed growth for $\psi$) and Proposition \ref{prop:riesz_integral}, we deduce that $\psi_\#$ is $C^1$ and satisfies the equation
\be \label{eq:psi_nondeg}
\psi_\#(x) = -\int_0^x H_\alpha(v^2 \psi_\#)(y) \, dy + \psi_\#(0).
\ee 
We next discuss the cases $\# \in \{e, o \}$ separately as follows.

\medskip
\textbf{Even Case.} Let $\psi_e \in W_s$ be an even solution of \eqref{eq:psi_nondeg}. We claim that 
\be \label{eq:T_nondeg}
\psi_e = \gamma T
\ee
for some $\gamma \in \R$. Indeed, let us define $h = \psi_e - \gamma T$ with some $\gamma \in \R$. Since $T(0) = 2s \neq 0$, we can choose $\gamma$ such that $h(0)=0$ holds. By linearity of \eqref{eq:psi_nondeg}, we deduce $h$ solves $h(x) = -\int_0^x H_\alpha(v^2 h)(y) \, dy$. By multiplying this equation with $v(x)>0$, we see that the function $f:= v h$ with $v(x) > 0$ satisfies
$$
f(x) = -v(x) \int_0^x H_\alpha(v f)(y) \, dy.
$$ 
From the estimate $0 < v(x) \leq C e^{-a |x|^{2 \alpha}}$ with some $a > 0$, we readily check that $f \in X_\alpha$. But since $f(0)=0$, we deduce from the proof of Lemma \ref{lem:nondeg} that $f \equiv 0$. By the strict positivity of $v(x) > 0$, this implies $h \equiv 0$. This proves that \eqref{eq:T_nondeg} holds.  

Thus we have shown that any even solution $\psi_e \in W_s$ of \eqref{eq:psi_nondeg} must satisfy $\psi_e \in \mathrm{span} \{ T \}$.

\medskip
\textbf{Odd Case.} Assume that $\psi_o \in W_s$ is an odd solution of \eqref{eq:psi_nondeg}. We claim that
\be \label{eq:R_nondeg}
\psi_o \in \mathrm{span} \{ R \}.
\ee
Suppose that $\psi_o \not \equiv 0$, since the claim is trivially true. By odd symmetry, we have $\psi_o(0)=0$ and using that $v^2 \psi_0 \in L^1(\R) \cap L^\infty(\R)$, we use Proposition \ref{prop:riesz_integral} to conclude
$$
\psi_o(x) = -c_\alpha \int_\R |x-y|^{2 \alpha}  (v^2 \psi_o(y)) \,dy 
$$ 
in view of the fact that $\int_{\R} |y|^{2 \alpha} v^2(y) \psi_o(y) \, dy = 0$, since $v^2 \psi_o \in L^1(\R; |x|^{2 \alpha} \, dx)$ is an odd function (recall that $v^2$ is even and the decay estimate $|v(x)| \leq C e^{-d|x|^{2 \alpha}}$). Using once again the $\psi_0$ is odd, we can rewrite the integral equation as
$$
\psi_0(x) = c_\alpha \int_0^\infty \left (|x+y|^{2 \alpha} - |x-y|^{2 \alpha} \right ) v(y)^2 \psi_o(y) \, dy.
$$
Multiplying this equation with $v(x) > 0$ and introducing the function $g(x) := v(x) \psi_o(x)$, we deduce
\be \label{eq:perron1}
g(x) = \int_0^\infty a(x,y) g(y) \, dy \quad \mbox{for $x > 0$},
\ee
with the positive kernel
$$
a(x,y) := c_\alpha v(x) \left (|x+y|^{2 \alpha} - |x-y|^{2 \alpha} \right ) v(y) > 0 \quad \mbox{for $x,y > 0$}.
$$
Thanks to the bound $|v(x)| \leq C e^{-a |x|^{2 \alpha}}$, we readily check that $a \in L^2(\R_{+} \times \R_{+})$. Thus the linear operator $A : L^2(\R_+) \to L^2(\R_+)$ with $A \phi(x) := \int_0^\infty a(x,y) \phi(y) \, dy$ is Hilbert--Schmidt and hence compact. Note also that $A=A^*$ is self-adjoint, since $a(x,y)=a(y,x)$ for the real-valued kernel $a(x,y)$. 

Since $g = v \psi_0 \in L^2(\R_+)$, we find from \eqref{eq:perron1} that $Ag = g$ holds, i.\,e., $g$ is an eigenfunction of $A$ with eigenvalue $1$. We claim that
\be \label{eq:span_h}
g \in \mathrm{span} \{ h \} \quad \mbox{with $h:=-v R$}.
\ee
Note that the strict positivity $h(x) := -v(x) R(x) > 0$ for $x >0$, since $v(x) > 0$ and $R(x)=\pt_x u=-H_\alpha(v^2) < 0$ for $x >0$ by Lemma \ref{lem:H_a} and the fact that $v=v^*$ is symmetric-decreasing. From $\LL R=0$ we conclude that $Ah = h$ by the discussion above.

Next, we claim that
\be
\mbox{$r=1$ is the largest eigenvalue of $A$ and it is simple}.
\ee
Indeed, let $r = \max \sigma(A)$ be the largest eigenvalue of the compact self-adjoint operator $A:L^2(\R_+) \to L^2(\R_+)$. By applying the Perron--Frobenius type result in Lemma \ref{lem:perron}, we deduce that $r >0$ exists and it is simple and its corresponding eigenfunction satisfies $\pm \psi(x) > 0$ for a.\,e.~$x > 0$. Since $Ah = h$, we must have that $r \geq 1$. Suppose now that $r > 1$ holds. Since $A=A^*$, we see that $A \psi = r \psi$ and $Ah = h$ implies that $(\psi, h) = 0$. But since $\pm \psi(x) > 0$ and $h(x) > 0$ for a.\,e.~$x>0$, this is impossible. Hence we can conclude that $r=1$ is the largest eigenvalue of $A$ and it is simple. In view of $Ag = g$ and $Ah = h$, we deduce  that \eqref{eq:span_h} holds. Thus we see that $g = \gamma h$ for some $\gamma \in \R$. Since $g= v \psi_o$ and $h=-v R$ together with $v(x) > 0$, this implies that $\psi_o = -\gamma R$, which shows that \eqref{eq:R_nondeg}. 

The proof of Theorem \ref{thm:nondeg} is now complete. \hfill $\qed$

\section{Generalizations to Nonconstant $K(x) > 0$}
\label{sec:thm_big}
In this section, we provide the proof of Theorem \ref{thm:big}, which considers solutions $u \in L_s(\R)$ of 
\be \label{eq:Gelf_K}
\Ds u = K e^u \quad \mbox{in $\R$}
\ee
with $s \in (\frac 1 2,1)$ and subject to the finiteness condition $\int_\R K e^u \,dx < +\infty$. Throughout the following, we assume that the prescribed function $K : \R \to \R_{>0}$ satisfies Assumption \textbf{(A)}. Moreover, we assume that $K(x) \not \equiv \mbox{const}.$ holds, since the case of constant $K(x)$ has already been covered by the previous discussion.

\subsection{Proof of Theorem \ref{thm:big}} We divide the proof into the following steps.

\medskip
\textbf{(i) Existence.} Let $\lambda > 0$ be given and consider the map $\Ts_\lambda : X_\alpha \setminus \{ 0 \} \to X_\alpha$ defined in \eqref{def:Ts}. By Corollary \ref{cor:exist}, there exists a fixed point $v \in X_\alpha \setminus \{ 0 \}$ of $\Ts_\lambda$. We see that $v(x) > 0$ is a $C^1$-solution of 
$$
\pt_x v = \frac{1}{2} (\pt_x \log K) v -\frac{1}{2} H_\alpha(v^2) v
$$
with $v(0) = \lambda \sqrt{K(0)} >0$. From applying Lemma \ref{lem:reformulation} and Proposition \ref{prop:integral_form}, we obtain that the even function $u := \log(K^{-1} v^2) \in L_s(\R)$ solves $\Ds u = K e^u$ with $K e^u = v^2 \in L^1(\R)$.

\medskip
\textbf{(ii) Regularity.} Let $u \in L_s(\R)$ solve \eqref{eq:Gelf_K}. By trivially adapting the proof of Theorem \ref{thm:reg_sym}, we find that $u \in \Cloc^{\gamma}(\R)$ with $\gamma = 2 \alpha =2 s-1 > 0$. Since $K$ is $C^1$, this implies that $K e^u \in \Cloc^\gamma(\R)$. From Lemma \ref{lem:regularity} it follows that $u \in \Cloc^{\gamma + 2s}(\R)$. 

\medskip
\textbf{(iii) Symmetry.} Assume that $u \in L_s(\R)$ solves \eqref{eq:Gelf_K} with $u(x) = O(|x|^{2 \alpha})$ as $|x| \to \infty$. From Lemma \ref{lem:symmetry} we deduce that
$u$ is an even function that is monotone-decreasing in $|x|$.

\medskip
\textbf{(iv) Uniqueness.} Next, we show that even solutions $u \in L_s(\R)$ of \eqref{eq:Gelf_K} are uniquely determined by its initial value $u(0)$ at the center of symmetry $x=0$. We recall that every even solution $u \in L_s(\R)$ must be integral by Proposition \ref{prop:integral_form}, i.\,e., we have that $B=0$ holds there and consequently we havet $u(x) = O(|x|^{2 \alpha})$ as $|x| \to \infty$.

Suppose now that $u \in L_s(\R)$ is an even solution of \eqref{eq:Gelf_K}. From the results in Section \ref{sec:setup} we deduce that $v := K e^u \in X_\alpha$ is a fixed point of the map $\Ts_{\lambda}^{(0)}= \Ts_{\lambda} : X_\alpha \setminus \{ 0 \} \to X_\alpha$ with $\lambda = v(0) = \sqrt{K(0) e^{u(0)}}$. By Lemma \ref{lem:unique}, we have uniqueness fixed points of $\Ts_\lambda$. Hence the claimed uniqueness result follows.

\medskip
\textbf{(v) Finite Morse Index.} Suppose that $u \in L_s(\R)$ is an even solution of \eqref{eq:Gelf_K} and hence $u(x) = O(|x|^{2 \alpha})$ as $|x| \to \infty$. Following exactly the proof of Proposition \ref{prop:morse}, we infer that the Schr\"odinger-type operator $H=\Ds-V$ with $V = K e^u$ has finitely many negative eigenvalues bounded by
$$
N_{<0}(H) \leq C_s ( \| |x|^{2 \alpha} V \|_{L^1} + 1) < +\infty,
$$
where we conclude that $|x|^{2 \alpha} K e^u \in L^1(\R)$ due to the bounds $\| K \|_{L^\infty} \leq K(0)$ and $e^{u(x)} \leq C e^{-d |x|^{2 \alpha}}$ with some constants $C, d > 0$. Using hat $N_{<0}$ is finite, we readily deduce that the Morse index $n_-(\LL) < +\infty$ is finite for the linearized operator $\LL = \Ds-K e^u$ viewed as a quadratic form on $C^\infty_c(\R)$; see the proof of Proposition \ref{prop:morse} for the full details. 

This completes the proof of Theorem \ref{thm:big}. \hfill $\qed$

\begin{appendix}

\section{Auxiliary Results} \label{sec:aux}

We collect some technical results needed in the paper.

\subsection{Fractional Leibniz Estimate}

\begin{lem} \label{lem:leibniz}
Let $R>0$ be given and suppose that $\chi \in  W^{1,\infty}(\R)$ with $0 \leq \chi \leq 1$ such that $\chi(x) = 0$ for $|x| \leq R$ and $\chi(x) = 1$ for $|x| \geq 2R$. Then, for any $s \in (0,1)$, we have
$$
[\chi \phi]_{\dot{H}^s} \leq C \left ( \|\phi\|_{L^\infty} + [\phi]_{\dot{H}^s}   \right ) \quad \mbox{for all $\phi \in \CC^\infty_c(\R)$} \, ,
$$
with some constant $C=C(\| \chi' \|_{L^\infty}, R, s) > 0$.
\end{lem}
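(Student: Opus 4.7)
The strategy is the standard fractional Leibniz-type estimate for differences, reduced to checking that the cutoff $\chi$ itself has finite homogeneous $\dot H^s$-seminorm, which is the only non-trivial point since $\chi$ is not compactly supported.

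My first step will be the algebraic identity
\begin{equation*}
\chi(x)\phi(x)-\chi(y)\phi(y)=\chi(x)\bigl(\phi(x)-\phi(y)\bigr)+\bigl(\chi(x)-\chi(y)\bigr)\phi(y).
\end{equation*}
Squaring and using $(a+b)^2\leq 2a^2+2b^2$, together with $0\leq\chi\leq 1$, gives the pointwise bound
\begin{equation*}
\frac{|\chi(x)\phi(x)-\chi(y)\phi(y)|^2}{|x-y|^{1+2s}}\leq 2\,\frac{|\phi(x)-\phi(y)|^2}{|x-y|^{1+2s}}+2\phi(y)^2\,\frac{(\chi(x)-\chi(y))^2}{|x-y|^{1+2s}}.
\end{equation*}
Integrating and applying Fubini, the first term contributes at most $2[\phi]_{\dot H^s}^2$, while the second is at most $2\|\phi\|_{L^\infty}^2\,[\chi]_{\dot H^s}^2$. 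Hence everything reduces to showing $[\chi]_{\dot H^s}<+\infty$ with a constant depending only on $\|\chi'\|_{L^\infty}$, $R$ and $s$.

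For the cutoff seminorm, the key observation is that although $\chi$ is not compactly supported, the function $\psi:=1-\chi$ is supported in $[-2R,2R]$ and satisfies $\|\psi'\|_{L^\infty}=\|\chi'\|_{L^\infty}$. Since constants do not contribute to the Gagliardo seminorm, $[\chi]_{\dot H^s}=[\psi]_{\dot H^s}$, so it suffices to estimate the latter. I would split the double integral into the regions $|x-y|\leq 1$ and $|x-y|>1$. On the near-diagonal region I use the Lipschitz bound $|\psi(x)-\psi(y)|\leq\|\psi'\|_{L^\infty}|x-y|$ together with the observation that the integrand vanishes unless at least one of $x,y$ lies in the support of $\psi$, fattened by $1$; this reduces the outer integral to a set of measure $O(R)$ and leaves $\int_{|z|\leq 1}|z|^{1-2s}\,dz<+\infty$. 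On the far region I use $|\psi(x)-\psi(y)|^2\leq 2\psi(x)^2+2\psi(y)^2\leq 2\|\psi\|_{L^\infty}^2(\mathbf{1}_{\mathrm{supp}\,\psi}(x)+\mathbf{1}_{\mathrm{supp}\,\psi}(y))$, giving an outer integral of measure $O(R)$ times $\int_{|z|>1}|z|^{-1-2s}\,dz<+\infty$.

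Combining the two halves yields $[\chi]_{\dot H^s}^2\leq C(\|\chi'\|_{L^\infty},R,s)$, and plugging back into the Leibniz estimate gives the claimed bound. The only genuinely delicate point is the splitting at $|x-y|=1$: since $\chi$ does not have compact support one cannot simply cite an $H^1$-type bound, but subtracting the constant $1$ is exactly the device that converts the problem to a compactly supported Lipschitz function and makes both pieces of the integral finite for every $s\in(0,1)$.
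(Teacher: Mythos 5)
Your proposal is correct and follows essentially the same route as the paper: the Leibniz-type pointwise splitting reduces everything to showing $[\chi]_{\dot H^s}<+\infty$, which both you and the paper obtain by splitting at $|x-y|=1$, using the Lipschitz bound near the diagonal and the uniform bound $|\chi(x)-\chi(y)|\leq 1$ far from it, with the outer integration confined to a set of measure $O(R)$. Your device of passing to $\psi=1-\chi$ is only a cosmetic variant of the paper's observation that $\chi(x)-\chi(y)$ vanishes when both points lie in a region where $\chi$ is constant.
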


\begin{proof}
This follows from a fractional Leibniz estimate, which can be deduced by elementary means. Indeed, we note that
\begin{align*}
[\chi \phi ]^2_{\dot{H}^s} & = \frac{c_s}{2} \int_\R \! \int_\R \frac{|\chi(x) \phi(x)- \chi(y) \phi(y)|^2}{|x-y|^{1+2s}} \, \df x \, \df y \\
& \leq C  \left (  \| \phi \|_{L^\infty}^2  \int_\R \! \int_\R \frac{|\chi(x)-\chi(y)|^2}{|x-y|^{1+2s}} \, \df x \, \df x + \int_\R \! \int_\R \frac{|\phi(x)-\phi(y)|^2}{|x-y|^{1+2s}} \, \df x \, \df y \right ) \\
& \leq C \left ( \| \phi \|_{L^\infty}^2 [\chi]_{\dot{H}^s}^2 + [\phi]_{\dot{H}^s}^2 \right ) \, , 
\end{align*}
which follows from the elementary inequality $|ab-cd|^2 \leq |a-c|^2 (|b|^2 + |d|^2) + |b-d|^2(|a|^2 + |c|^2)$ together with the uniform bounds $|\chi(x)| \leq 1$ and $|\phi(x)| \leq \| \phi \|_{L^\infty}$. It remains to show that
$$
[\chi]_{\dot{H}^s}^2 \leq K
$$
with some constant $K>0$ depending only on $\| \chi' \|_{L^\infty}, R$ and $s$. To this end, we note that $\chi(x)-\chi(y) = 0$ on the sets $\{ (x,y) \in \R^2 : |x| \leq R, |y| \leq R\}$ and $\{ (x,y) \in \R^2 : |x| \geq 2R, |y| \geq 2 R\}$. Using this fact together with the symmetry in $x$ and $y$, we conclude
\begin{align*}
[\chi]_{\dot{H}^s}^2 & \leq C \int_{|x| \leq 2 R} \int_{|y| \geq R} \frac{|\chi(x)-\chi(y)|^2}{|x-y|^{1+2s}} \, \df x \, \df y \\
& = \int_{|x| \leq 2R} \int_{|y| \geq 2R} ( \mathds{1}_{\{ |x-y| \leq 1\} } + \mathds{1}_{\{ |x-y| > 1\} } ) \frac{|\chi(x)-\chi(y)|^2}{|x-y|^{1+2s}} \, \df x \, \df y \\
& \leq C \|\chi'\|_{L^\infty}^2 \int_{|x| \leq 2 R} \int_{|x-y| \leq 1} |x-y|^{1-2s} \, \df x \, \df y  \\
& \quad + C \int_{|x| \leq 2R} \int_{|x-y| > 1} |x-y|^{-1-2s} \, \df x \, \df y \leq C R (\| \chi' \|_{L^\infty}^2 +1) \, .
\end{align*}
This completes the proof.
\end{proof}

\subsection{Compactness Results}

We have the following compactness result concerning subsets of the Banach space $X_\alpha$ with parameter $\alpha >0$ defined in Section \ref{sec:setup} above.
 
\begin{lem} \label{lem:compact}
Let $C > 0$ and $d >0$ be constants. Suppose that $\mathcal{F} \subset X_\alpha$ satisfies
$$
|v(x)| \leq C \eu^{-d |x|^{2 \alpha}} \quad |\pt_x v(x)| \leq C \quad \mbox{for all $v \in \mathcal{F}$ and a.\,e.~$x \in \R$} \, .
$$
Then $\mathcal{F}$ is relatively compact in $X_\alpha$. 
\end{lem}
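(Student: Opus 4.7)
The strategy is a standard Arzel\`a--Ascoli plus dominated convergence argument, adapted to the three seminorms that make up $\|\cdot\|_{X^\alpha}$. Let $(v_n) \subset \mathcal{F}$ be an arbitrary sequence; it suffices to extract a subsequence that converges in $X_\alpha$.

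First, the pointwise bound $|\pt_x v_n(x)| \leq C$ implies that each $v_n$ is globally Lipschitz with constant $C$, hence the family $(v_n)$ is equicontinuous on every compact interval $[-R,R] \subset \R$. Combined with the uniform bound $|v_n(x)| \leq C$, the classical Arzel\`a--Ascoli theorem on $[-R,R]$, applied together with a standard diagonal extraction over $R = 1, 2, 3, \ldots$, yields a subsequence (still denoted $(v_n)$) and a continuous function $v : \R \to \R$ such that
$$
v_n \to v \quad \text{uniformly on every compact subset of $\R$}.
$$
The limit $v$ is even as a pointwise limit of even functions, and satisfies the same pointwise bound $|v(x)| \leq C e^{-d|x|^{2\alpha}}$ by passing to the limit.

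Next I would upgrade the convergence to global $L^\infty$ convergence. Given $\eps > 0$, choose $R_\eps > 0$ so large that $2C e^{-d R_\eps^{2\alpha}} < \eps$. Then for $|x| \geq R_\eps$ and every $n$,
$$
|v_n(x) - v(x)| \leq |v_n(x)| + |v(x)| \leq 2 C e^{-d |x|^{2\alpha}} < \eps,
$$
while on $[-R_\eps, R_\eps]$ uniform convergence yields $\sup_{|x| \leq R_\eps} |v_n(x) - v(x)| < \eps$ for all $n$ large. Hence $\|v_n - v\|_{L^\infty} \to 0$.

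Finally, for the two weighted $L^2$ components I would invoke Lebesgue's dominated convergence theorem. The pointwise bounds on $v_n$ and $v$ give
$$
|v_n(x) - v(x)|^2 \leq 4 C^2 e^{-2 d |x|^{2\alpha}}, \qquad |x|^{2\alpha}|v_n(x) - v(x)|^2 \leq 4 C^2 |x|^{2\alpha} e^{-2 d |x|^{2 \alpha}},
$$
and both dominants are integrable on $\R$. Pointwise convergence $v_n(x) \to v(x)$ (already established on compacts, hence everywhere after diagonal extraction) together with DCT therefore gives $\|v_n - v\|_{L^2} \to 0$ and $\||x|^\alpha (v_n - v)\|_{L^2} \to 0$. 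Summing the three contributions shows $v_n \to v$ in $X_\alpha$, which proves relative compactness. No step here is genuinely delicate; the only thing to be mindful of is keeping the three seminorms aligned and confirming that the limit indeed lies in $X_\alpha$ (evenness plus the inherited decay bound).
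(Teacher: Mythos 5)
Your argument is correct and follows essentially the same route as the paper: Arzel\`a--Ascoli with the uniform Lipschitz bound to get local uniform convergence, then the uniform decay bound $|v_n(x)| \leq C e^{-d|x|^{2\alpha}}$ to upgrade to global $L^\infty$ convergence and to dominate the (weighted) $L^2$ norms. You merely spell out the tail estimate and the dominated convergence step that the paper summarizes as ``we easily see,'' and you correctly check evenness and decay of the limit, so nothing is missing.
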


\begin{proof}
Let $(v_k)_{k=1}^\infty$ be a sequence in $\mathcal{F}$. Thanks to the uniform Lipschitz bound on $(v_k)_{k=1}^\infty$ and by Arzel\`a--Ascoli, we have (after passing to a subsequence if necessary) that $v_k \to v$ in $\Lloc^\infty(\R)$ with some $v : \R \to \R$ such that $|v(x)| \leq C \eu^{-d |x|^{2 \alpha}}$. Since $v_n \to v$ in $\Lloc^\infty(\R)$ together with the uniform pointwise bound $|v_k(x)| \leq C \eu^{-d |x|^{2 \alpha}}$, we easily see that $v_k \to v$ in $L^2(\R, (1+|x|^{2 \alpha}) \, \df x) \cap L^\infty(\R)$.  Since all $v_k$ are even functions, so is its limit $v$. This proves that $\mathcal{F}$ is relatively compact in $X_\alpha$.
\end{proof}

\begin{lem} \label{lem:helly}
Let $1 \leq p_1 < p_2 < \infty$ be given. Suppose that $f_n=f_n^*$ is a sequence of symmetric-decreasing functions satisying
$$
\| f_n \|_{L^p} \leq C \quad \mbox{for all $p \in [p_1, p_2]$}
$$ 
with some constant $C>0$. Then there exists a subsequence $(f_{n_k})_{k=1}^\infty$ and a symmetric-decreasing function $f=f^*$ such that $f_{n_k} \to f$ almost everywhere in $\R$ and
$$
\mbox{$f_{n_k} \to f$ in $L^p(\R)$ for all $p \in (p_1, p_2)$}.
$$
\end{lem}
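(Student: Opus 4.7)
The plan is to combine the classical Helly selection theorem (applied to monotone functions on $[0,\infty)$) with a dominated convergence argument based on uniform pointwise bounds that follow from the rearrangement properties of $f_n$.

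First, I would establish uniform pointwise decay bounds. For any symmetric-decreasing nonnegative function $f=f^*$ and any $q \in [1,\infty)$, the monotonicity gives $f(x)^q \cdot 2|x| \leq \int_{|y|\leq |x|} f(y)^q \, dy \leq \|f\|_{L^q}^q$, hence $f(x) \leq C_q |x|^{-1/q}$ for $x \neq 0$. Applied to $q = p_1$ and $q = p_2$, this yields
$$
f_n(x) \leq C \min\bigl(|x|^{-1/p_1},\, |x|^{-1/p_2}\bigr) =: g(x) \quad \mbox{for all $n \geq 1$ and a.\,e.~$x \in \R$},
$$
with $C>0$ independent of $n$. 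One directly checks that $g \in L^p(\R)$ for every $p \in (p_1, p_2)$: for $|x| \geq 1$ we have $g(x)^p \lesssim |x|^{-p/p_1}$ which is integrable iff $p > p_1$, while for $|x| \leq 1$ we have $g(x)^p \lesssim |x|^{-p/p_2}$ which is integrable iff $p < p_2$.

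Second, I would extract a pointwise a.e.~convergent subsequence. Restricting to $[0,\infty)$, each $f_n$ is nonincreasing, so Helly's selection theorem produces a subsequence (still denoted $f_{n_k}$) converging pointwise on $[0,\infty)$, except possibly on a countable set, to some nonincreasing function $\tilde f : [0,\infty) \to [0,\infty]$. By the uniform bound $f_n(x) \leq g(x) < \infty$ for $x > 0$, the limit $\tilde f$ is finite away from $0$. Extending by $f(x) := \tilde f(|x|)$ gives a symmetric-decreasing function $f = f^*$ with $f_{n_k}(x) \to f(x)$ for a.\,e.~$x \in \R$.

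Third, I would upgrade to $L^p$ convergence. For every $p \in (p_1, p_2)$, the pointwise bound $f_{n_k}(x)^p \leq g(x)^p$ together with $g^p \in L^1(\R)$ and the pointwise a.\,e.~convergence $f_{n_k} \to f$ allows dominated convergence to conclude $f_{n_k} \to f$ in $L^p(\R)$.

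There is no real obstacle here; the only point requiring care is the verification that the envelope $g$ lies in $L^p$ for exactly the open range $(p_1, p_2)$, which is why the result cannot be extended to the endpoints without additional hypotheses.
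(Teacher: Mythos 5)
Your proof is correct and follows essentially the same route as the paper: the uniform pointwise bound $f_n(x) \leq C \min\{|x|^{-1/p_1}, |x|^{-1/p_2}\}$ from monotonicity and the $L^{p_1}$, $L^{p_2}$ bounds, then Helly's selection principle for a pointwise a.e.\ limit, then dominated convergence with this envelope to get $L^p$ convergence for $p \in (p_1,p_2)$. No issues to report.
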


\begin{proof}
Using that $f_n=f_n^*$ are even functions, we write $f_n = f_n(r)$ with $r=|x| \geq 0$. Since each $f_n$ is monotone decreasing in $r$ and by the assumed uniform bound, we find for $p \in \{ p_1, p_2\}$ that $C \geq \int_{0}^r |f(s)|^p \, ds \geq r |f(r)|^p$ for almost every $r > 0$. Hence we have uniform pointwise bound
\be \label{ineq:helly}
|f_n(r)| \leq C \min \{ r^{-1/p_1}, r^{-1/p_2} \} \quad \mbox{for a.\,e.~$r> 0$}.
\ee
Now, since $f_n=f_n(r)$ are monotone decreasing, we can invoke Helly's selection principle to deduce that there exists a symmetric-decreasing function $f=f(r)$ such that $f_{n_k} \to f$ almost everywhere for some subsequence $(f_{n_k})_{k=1}^\infty$. Since the right-hand side in \eqref{ineq:helly} belongs to $L^p(\R)$ for any $p \in (p_1,p_2)$, we can use the dominated convergence theorem to deduce that $f_{n_k} \to f$ in $L^p(\R)$ for $p \in (p_1, p_2)$.
\end{proof}

\begin{lem} \label{lem:compact_frechet}
Let $X$ be a Banach space and suppose $\Omega \subseteq X$ is open. If $F : \Omega \to X $ is a compact map whose Fr\'echet derivative $DF(v)$ exists at some $v \in \Omega$, then $DF(v) \in \mathcal{L}(X)$ is a compact linear operator. 
\end{lem}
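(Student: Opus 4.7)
The strategy is the standard one: exploit that Fr\'echet differentiability gives a linear approximation whose error is $o(\|h\|)$, and combine this with the compactness of $F$ applied along small displacements $v + t h_n$ from the base point $v$.

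First I would record that $DF(v) \in \mathcal{L}(X)$ is automatically a bounded linear operator by definition of Fr\'echet differentiability, so only compactness remains to be proved. To this end, I would fix a bounded sequence $(h_n)_{n \geq 1} \subset X$ with $\|h_n\| \leq M$ for some $M > 0$ and aim to extract a subsequence such that $(DF(v) h_{n_k})_{k \geq 1}$ converges in $X$. Since $\Omega$ is open, there exists $t_0 > 0$ such that $v + t h_n \in \Omega$ for all $0 < t \leq t_0$ and all $n \geq 1$.

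Next, for each $m \in \N$ fix a scale $t_m \in (0, t_0]$ with $t_m \to 0$. For each fixed $m$, the set $\{ v + t_m h_n : n \geq 1 \} \subset \Omega$ is bounded and hence its image under the compact map $F$ is relatively compact in $X$. A standard diagonal extraction then produces a single subsequence $(h_{n_k})_{k \geq 1}$ such that the sequence $(F(v + t_m h_{n_k}))_{k \geq 1}$ converges in $X$ for every $m \in \N$.

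The final step is to show that $(DF(v) h_{n_k})_{k \geq 1}$ is Cauchy. Writing the remainder $r(u) := F(v+u) - F(v) - DF(v) u$, Fr\'echet differentiability gives $\|r(u)\|/\|u\| \to 0$ as $u \to 0$. Applied to $u = t_m h_{n_k}$ with $\|t_m h_{n_k}\| \leq t_m M \to 0$, we get the bound $\|r(t_m h_{n_k})\| \leq \eps_m \, t_m M$ with $\eps_m \to 0$, uniformly in $k$. The identity
$$
DF(v) h_{n_k} = \frac{F(v + t_m h_{n_k}) - F(v)}{t_m} - \frac{r(t_m h_{n_k})}{t_m}
$$
then yields the estimate
$$
\| DF(v)(h_{n_k} - h_{n_j}) \| \leq \frac{1}{t_m} \| F(v + t_m h_{n_k}) - F(v + t_m h_{n_j}) \| + 2 \eps_m M.
$$
Given any $\eps > 0$, I would first choose $m$ so large that $2 \eps_m M < \eps/2$, and then use the convergence of $(F(v + t_m h_{n_k}))_{k \geq 1}$ to make the first term smaller than $\eps/2$ for all sufficiently large $k, j$. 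This establishes the Cauchy property and hence convergence of $(DF(v) h_{n_k})_{k \geq 1}$, completing the proof.

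I do not anticipate a genuine obstacle: the only mildly delicate point is ensuring that one and the same subsequence works simultaneously for all scales $t_m$, which is exactly what the diagonal extraction provides, and that the error term $r(t_m h_n)/t_m$ is controlled uniformly in $n$ by the quantitative $o(\|u\|)$ statement from Fr\'echet differentiability.
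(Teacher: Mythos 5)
Your proof is correct. The uniform remainder bound $\|r(t_m h_{n_k})\| \leq \eps_m t_m M$ with $\eps_m \to 0$ is exactly what the quantitative $o(\|u\|)$ statement of Fr\'echet differentiability gives (take $\eps_m$ to be the supremum of $\|r(u)\|/\|u\|$ over $0<\|u\|\leq t_m M$), the diagonal extraction is legitimate since for each fixed $m$ the set $\{v+t_m h_n : n\geq 1\}$ is a bounded subset of $\Omega$ once $t_m\leq t_0$, and your final Cauchy estimate closes the argument. The paper proves the lemma by the same basic mechanism --- comparing $DF(v)h$ with the difference quotient $\bigl(F(v+\tau h)-F(v)\bigr)/\tau$ --- but runs it as a contradiction: if $DF(v)$ were not compact, one could choose $y_k$ in the unit ball with $\|DF(v)(y_k-y_\ell)\|\geq \eps_0$ for $k\neq\ell$, and then a \emph{single} sufficiently small scale $\tau$, chosen so that the remainder is at most $\frac{\eps_0}{4}\|h\|$, forces $\|F(v+\tau y_k)-F(v+\tau y_\ell)\|\geq \frac{\eps_0}{2}\tau$, contradicting relative compactness (total boundedness) of the image of $\{v+\tau y_k\}$ under $F$. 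Because the separation constant $\eps_0$ is fixed in advance, one scale suffices and no sequence of scales or diagonal argument is needed, so the paper's version is shorter; your direct argument pays for its constructive Cauchy conclusion with the $t_m\to 0$ diagonalization. Note also that both arguments only use compactness of $F$ on arbitrarily small balls around $v$, which is what matters in the paper's application, where $\Tss$ is only locally compact on $X_\alpha\setminus\{0\}$.
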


\begin{proof}
Suppose that the Fr\'echet derivative $DF(v)$ exists, but fails to be a compact linear map. Thus there exist $\eps_0 > 0$ and some sequence $y_n \in X$ with $\| y_n \| \leq 1$ such that
$$
\| DF(v) ( y_k -  y_\ell) \| \geq \eps_0 \quad \mbox{for $k \neq \ell$}.
$$ 
By the definition of Fr\'echet differentiability there exists some $\delta > 0$ such that
$$
\| F(v+h) - F(v) - DF(v) h \| \leq \frac{\eps_0}{4} \| h \| \quad \mbox{for $\| h \| \leq \delta$}.
$$
Let $\tau > 0$ be sufficiently small such that $\| \tau y_k \| \leq \delta$ and $v + \tau y_k \in \Omega$ for all $k \geq 1$. By the triangle inequality, it follows
\begin{align*}
\| F(v+\tau y_k)-F(v+ \tau y_\ell)\| & \geq \| DF(v)(\tau y_k - \tau y_\ell)\| - \| F(v+ \tau y_k) - F(v) - DF(v) \tau y_k \| \\
& \quad - \| F(v+ \tau y_\ell) - F(v)- DF(v) \tau y_\ell \| \geq \frac{\eps_0}{2} \tau   
\end{align*}
for $k \neq \ell$. But this contradicts our assumption that $F : \Omega \to X$ is a compact map.
\end{proof}

\subsection{Fr\'echet Differentiability} 
Next, we study the Fr\'echet differentiability of the map $\Tss$ introduced in Section \ref{sec:setup} above. 

\begin{prop} \label{prop:frechet}
The map $\Tss : X_\alpha \setminus \{ 0 \} \to X_\alpha$ is Fr\'echet differentiable with
$$
(D_v \Tss[v]h)(x) = -\Tss[v](x) \int_0^x H_\alpha(vh)(y) \, dy \quad \mbox{for all $h \in X_\alpha$}.
$$
Moreover, the bounded linear map $D_v \Tss[v]: X_\alpha \to X_\alpha$ is compact and its operator norm depends continuously on $(\lambda, \sigma, v) \in (0, \infty) \times \R \times X_\alpha$.
\end{prop}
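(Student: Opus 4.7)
My plan is to start from the multiplicative identity that follows from $(v+h)^2 = v^2 + 2vh + h^2$: the exponent in \eqref{def:Ts} for $\Tss[v+h]$ splits as $-\frac{1}{2}\int_0^x H_\alpha(v^2)(y)\,dy + g(v,h)(x)$ with
\[
g(v,h)(x) := -\int_0^x H_\alpha(vh)(y)\,dy - \tfrac{1}{2}\int_0^x H_\alpha(h^2)(y)\,dy,
\]
so that $\Tss[v+h](x) = \Tss[v](x)\,e^{g(v,h)(x)}$. The candidate Fréchet derivative, read off from the linear-in-$h$ part, is precisely the operator $L_v$ given in the statement. To check that $L_v : X_\alpha \to X_\alpha$ is bounded, I will combine Lemma \ref{lem:H_a} (which gives $\|H_\alpha(vh)\|_{L^\infty} \lesssim \|vh\|_{L^1 \cap L^\infty} \lesssim \|v\|_{X_\alpha}\|h\|_{X_\alpha}$) with the pointwise decay of $\Tss[v]$ from Proposition \ref{prop:T_bounds}, which will absorb the factor $|x|$ produced by the inner integral. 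Here the assumption $v \neq 0$ is essential when $\sigma = 0$.

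\textbf{Remainder estimate.} Next, I will control the quadratic remainder using $|e^t - 1 - t| \leq \tfrac{1}{2}t^2 e^{|t|}$, which gives
\[
E(x) := \Tss[v+h](x) - \Tss[v](x) - L_v[h](x) = \Tss[v](x)\Bigl(e^{g(v,h)(x)} - 1 - g(v,h)(x) - \tfrac{1}{2}\!\int_0^x\! H_\alpha(h^2)(y)\,dy\Bigr),
\]
together with the elementary bound $|g(v,h)(x)| \leq C_v|x|\,\|h\|_{X_\alpha}$ valid for $\|h\|_{X_\alpha} \leq 1$. This yields the pointwise estimate $|E(x)| \lesssim_v \Tss[v](x)\bigl(|x|^2\|h\|_{X_\alpha}^2\, e^{C_v|x|\|h\|_{X_\alpha}} + |x|\,\|h\|_{X_\alpha}^2\bigr)$. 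The hard part is the factor $e^{C_v|x|\|h\|_{X_\alpha}}$: for $\sigma = 0$ the decay of $\Tss[v](x)$ is only stretched-exponential $e^{-d|x|^{2\alpha}}$ with $2\alpha < 1$, so this cannot be absorbed uniformly in $x$. My remedy is to split $\R = \{|x| \leq \|h\|_{X_\alpha}^{-1/2}\} \cup \{|x| > \|h\|_{X_\alpha}^{-1/2}\}$: on the inner region the exponential is at most $e^{C_v\|h\|_{X_\alpha}^{1/2}} \leq 2$ for $\|h\|_{X_\alpha}$ small, so $|E|$ is controlled by $C_v(|x|^2+|x|)\|h\|_{X_\alpha}^2 e^{-d|x|^{2\alpha}}$, producing an $X_\alpha$-contribution of order $\|h\|_{X_\alpha}^2$; on the outer region I will use the crude bound $|E(x)| \leq C_v(1+|x|\|h\|_{X_\alpha})e^{-d'|x|^{2\alpha}}$ (uniform in $\|h\|$ small via Proposition \ref{prop:T_bounds} applied at $v+h$, using $\|v+h\|_{L^2} \geq \|v\|_{L^2}/2$), whose $X_\alpha$-norm is of order $e^{-d'\|h\|_{X_\alpha}^{-\alpha}/2}$, i.e., super-polynomially small. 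Summing the two pieces will yield $\|E\|_{X_\alpha} = o(\|h\|_{X_\alpha})$ and hence Fréchet differentiability. For $\sigma \neq 0$ the Gaussian decay makes this splitting unnecessary.

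\textbf{Compactness and parameter continuity.} For compactness of $D_v\Tss[v]$, I will invoke Lemma \ref{lem:compact_frechet} together with the local compactness of $\Tss$ from Lemma \ref{lem:Ts_compact}: the proof of Lemma \ref{lem:compact_frechet} only uses the hypothesis on a small ball $B_\delta(v)$, which local compactness supplies. For the operator-norm continuity in $(\lambda,\sigma,v)$, I plan to write
\[
L_v[h] - L_{\tilde v}[h] = -\bigl(\Tss[v] - \Tss[\tilde v]\bigr)\!\int_0^x\! H_\alpha(vh)(y)\,dy - \Tss[\tilde v]\!\int_0^x\! H_\alpha((v-\tilde v)h)(y)\,dy,
\]
and estimate both terms via the $L^\infty_{\mathrm{loc}}$-continuity of $(\lambda,\sigma,v)\mapsto\Tss[v]$ (immediate from \eqref{def:Ts} and dominated convergence) and the bilinear $L^\infty$-bound on $H_\alpha$. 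The same $|x|$-splitting as above reduces the argument to a bounded-$x$ region and handles the exponential growth issue in the case $\sigma = 0$.
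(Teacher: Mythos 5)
Your proposal is correct, and its skeleton coincides with the paper's: the same factorization $\Tss[v+h]=\Tss[v]e^{g(v,h)}$, the same candidate derivative, a quadratic control of the remainder, and compactness of $D_v\Tss[v]$ via Lemma \ref{lem:compact_frechet} combined with the local compactness of Lemma \ref{lem:Ts_compact}. The genuine difference lies in how the exponential remainder is tamed. The paper applies Taylor's theorem in Lagrange form to $e^{w_h}=e^{w_0+g_h}$, observing that the intermediate exponent $\xi_h$ lies between $w_0$ and $w_h$ and that \emph{both} satisfy the uniform bound $w_h(x)\leq -a|x|^{2\alpha}+b$ for all $h$ in a small ball (since $\|v+h\|_{L^2}\geq\|v\|_{L^2}-\eps>0$); moreover it bounds $|g_h(x)|\lesssim |x|^{2\alpha}\|h\|_{L^2}$ by writing $\int_0^x H_\alpha(\cdot)\,dy$ through the kernel $|x-y|^{2\alpha}-|y|^{2\alpha}$ (Proposition \ref{prop:riesz_integral}), so the remainder $\tfrac12 e^{\xi_h}g_h^2$ is directly $O(\|h\|_{X^\alpha}^2)$ in $X_\alpha$ and no further device is needed. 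You instead use $|e^t-1-t|\leq\tfrac12 t^2e^{|t|}$ with the cruder bound $|g|\lesssim |x|\,\|h\|_{X^\alpha}$, which forces the splitting of $\R$ at $|x|=\|h\|_{X^\alpha}^{-1/2}$ together with the uniform stretched-exponential bound on $\Tss[v+h]$ from Proposition \ref{prop:T_bounds} in the outer region; I checked that this closes (the inner piece is $O(\|h\|_{X^\alpha}^2)$, the outer piece is $O(e^{-c\|h\|_{X^\alpha}^{-\alpha}})$, both $o(\|h\|_{X^\alpha})$, and all pieces are even, so they lie in $X_\alpha$). Two remarks: if you replace your linear-in-$|x|$ bound on $g$ by the paper's $|x|^{2\alpha}$ bound, the offending factor becomes $e^{C\|h\|\,|x|^{2\alpha}}$, which is absorbed by $e^{-d|x|^{2\alpha}}$ once $\|h\|$ is small, and the splitting disappears altogether; and for the operator-norm continuity (which the paper omits as routine) your decomposition should also let $(\lambda,\sigma)$ vary in the first term, i.e.\ compare $\Tss[v]$ with $\Ts_{\tilde\lambda}^{(\tilde\sigma)}[\tilde v]$, which your local-uniform-convergence plus uniform-decay splitting handles without change.
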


\begin{proof}
Let $\lambda >0$, $\sigma \in \R$, and $v \in X_\alpha \setminus \{ 0 \}$ be given. Take $0 < \eps < \| v \|_{L^2}$. For $h \in X_\alpha$ with $\| h \|_{X^\alpha} < \eps$, we need to show that
\be \label{eq:Frechet}
\Tss(v+h) - \Tss(v) = L h + o(\| h \|_{X^\alpha}),
\ee
where $L \in \mathcal{L}(X_\alpha)$ is the linear bounded operator given by
$$
(Lh)(x) =-\Tss[v](x) \int_0^x H_\alpha(vh)(y) \, dy.
$$ 
Indeed, for $h \in X_\alpha$ with $\| h \|_{X^\alpha} < \eps$, we define 
$$
w_h(x) := -\frac{1}{2} \int_0^x H_\alpha((v+h)^2)(y) \, dy = -d_\alpha \int_{\R} (|x-y|^{2 \alpha} - |y|^{2 \alpha}) (v+h)(y)^2 \, dy.
$$
By following the proof of Proposition \ref{prop:T_bounds} and using that $\| v+h \|_{L^2} \geq \| v\|_{L^2} -\eps > 0$, we obtain the pointwise bound
\be \label{ineq:w_h}
w_h(x) \leq -a |x|^{2 \alpha} + b \quad \mbox{for $x \in \R$ and $h \in B_\eps(0)$},
\ee 
with some positive constants $a, b > 0$. Let us now write
$$
w_h(x) = w_0(x) + g_h(x),
$$
where
$$
g_h(x) = -\int_0^x H_\alpha(vh)(y) \, dy - \frac{1}{2} \int_0^x H_\alpha(h^2)(y) \, dy.
$$
By Taylor expansion, we get
\be \label{eq:taylor}
e^{w_h(x)} = e^{w_0(x) + g_h(x)} = e^{w_0(x)} + e^{w_0(x)} g_h(x) + \frac{1}{2} e^{\xi_h(x)} g_h(x)^2
\ee
with some $\xi_h(x) \leq -a |x|^{2 \alpha} + b$ for all $x \in \R$ and $h \in B_\eps(0)$ thanks to \eqref{ineq:w_h}. Thus,
\begin{align*}
\| e^{\xi_h} g^2_h \|_{X^\alpha}  & \leq  \left \| e^{-a |x|^{2 \alpha} + b} g_h^2 \right \|_{L^2}  + \left \| |x|^{\alpha} e^{-a |x|^{2 \alpha} + b} g_h^2 \right \|_{L^2} + \left \| e^{-a |x|^{2 \alpha} + b} g_h^2 \right \|_{L^\infty} \\
& \leq C \left ( \| (1+|x|^\alpha) |x|^{4 \alpha} e^{-a |x|^{2 \alpha} + b} \|_{L^2} + \| |x|^{4 \alpha} e^{-a|x|^{2 \alpha} + d} \|_{L^\infty} \right )  \| h \|_{L^2}^2 = o(\| h \|_{X^\alpha}).
\end{align*}
Here we also used the uniform pointwise bound
$$
|g_h(x)| \leq C |x|^{2 \alpha} ( \| v h \|_{L^1} + \| h^2 \|_{L^1}) \leq C |x|^{2 \alpha} (\| v \|_{L^2} \|h \|_{L^2} + \| h \|_{L^2}^2) \leq C |x|^{2 \alpha} \|v \|_{L^2} \| h\|_{L^2} 
$$
noticing that $\| h \|_{L^2} < \eps < \|v \|_{L^2}$ in the last step. Next, we note that
$$
g_h(x) = -\int_0^x H_\alpha(vh)(y) \, dy + r_h(x) \quad \mbox{with} \quad r_h(x) = -\frac{1}{2} \int_0^x H_\alpha(h^2)(y) \, dy.
$$
From the pointwise bound $|r_h(x)| \leq C |x|^{2 \alpha} \| h \|_{L^2}^2$, we deduce
$$
\| e^{w_0} r_h \|_{X^\alpha} \leq C \| h \|_{L^2}^2 = o(\|h \|_{X^\alpha}).
$$
By multiplying \eqref{eq:taylor} with $\lambda \sqrt{K(x)} e^{-\frac{1}{2} \sigma^2 x^2} \in L^\infty(\R)$, we deduce that \eqref{eq:Frechet} holds true.

Finally, since the map $\Tss : X_\alpha \setminus \{ 0 \} \to X_\alpha$ is locally compact, it follows that its Fr\'echet derivative $D_v \Tss[v]$ is a compact operator; see Lemma \ref{lem:compact_frechet} for a short proof of this general fact. Also, the continuity 
$$
\| D_v \Tss[v] - D_v \Ts_{\lambda_0}^{(\sigma_0)}[v_0] \|_{\mathcal{L}(X_\alpha)} \to 0 \quad \mbox{as} \quad (\lambda, \sigma, v) \to (\lambda_0, \sigma_0, v_0)
$$
follows by straightforward arguments, which we omit here.
\end{proof}

\subsection{Injectivity of the Laplace Transform}

Recall that $\mathsf{L}w(t) = \int_0^\infty e^{-st} w(s) \,ds$ denotes the (one-sided) Laplace transform of a function $w : [0,\infty) \to \R$. We have the following well-known fact, whose proof we provide for the reader's convenience.
 
\begin{lem} \label{lem:Laplace}
Let $w : [0,\infty) \to \R$ be bounded and continuous. Then $\mathsf{L}w(t) = 0$ for all $t > 0$ if and only if $w(s) = 0$ for all $s \geq 0$.
\end{lem}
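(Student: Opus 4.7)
The plan is to reduce the statement to a classical moment problem on $[0,1]$ via the substitution $x=e^{-s}$ and then invoke the Weierstrass approximation theorem. The reverse implication being trivial, only the direction "$\mathsf{L}w\equiv 0$ on $(0,\infty)$ implies $w\equiv 0$" carries content.

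First I would fix an arbitrary $t_0>0$ and set $h(s):=e^{-t_0 s}w(s)$, which is bounded, continuous, and now belongs to $L^1([0,\infty))$. The vanishing hypothesis gives $\mathsf{L}h(t)=\mathsf{L}w(t+t_0)=0$ for every $t>0$. The purpose of this shift is that the function
$$
\phi(x):=h(-\log x)=x^{t_0}w(-\log x), \qquad x\in(0,1], \qquad \phi(0):=0,
$$
extends continuously to the full interval $[0,1]$, since $|\phi(x)|\le \|w\|_{L^\infty} x^{t_0}\to 0$ as $x\to 0^+$.

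Second, the change of variables $x=e^{-s}$ transforms the Laplace integral into
$$
\mathsf{L}h(t)=\int_0^1 x^{t-1}\phi(x)\,dx=0\quad\text{for all }t>0.
$$
Specializing to $t=n+1$ with $n\in\mathbb{N}_0$ yields the vanishing of all moments $\int_0^1 x^n\phi(x)\,dx=0$. By the Weierstrass approximation theorem, polynomials are dense in $C([0,1])$ in the uniform norm; combined with the uniform bound on $\phi$, this gives $\int_0^1 \psi(x)\phi(x)\,dx=0$ for every $\psi\in C([0,1])$. Taking $\psi=\phi$ forces $\int_0^1 \phi(x)^2\,dx=0$, hence $\phi\equiv 0$, and therefore $w\equiv 0$ on $[0,\infty)$.

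The only subtle step is the continuous extension of $\phi$ at $x=0$, which is precisely what necessitates the shift parameter $t_0>0$; once this is in place the rest of the argument is routine, so there is no substantive obstacle to overcome.
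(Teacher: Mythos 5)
Your proof is correct and follows essentially the same route as the paper: the paper also substitutes $y=e^{-s}$, works with the function $v(y)=y\,w(-\log y)$ (your $\phi$ with the shift fixed at $t_0=1$, via the choice $t=2+n$) to get continuity at $0$, and concludes by Weierstrass approximation that all moments vanishing forces $v\equiv 0$. The only difference is cosmetic: you keep the shift $t_0>0$ general before specializing, whereas the paper hardwires it into the exponent.
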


\begin{remark*}
The arguments below easily generalize to continuous $w$ that are of exponential type, i.\,e., we have $|w(s)| \leq M e^{c s}$ for all $s \geq 0$ with some constants $M, c \geq 0$.
\end{remark*}

\begin{proof}

Suppose that $w : [0,\infty) \to \R$ be bounded and continuous. Clearly, we have that $|\mathsf{L}w(t)| \leq t^{-1} \| w \|_{L^\infty(\R_+)}$ for $t > 0$ and it follows that $\mathsf{L}w(t)$ is continuous in $t > 0$ by dominated convergence. Assume now that $\mathsf{L}w(t) = 0$ for $t > 0$. In particular, we can take $t=2 + n>0$ for any integer $n \geq 0$, which yields
$$
0 = \int_0^\infty e^{-2s-ns} w(s) \, ds = \int_0^1 y^{n+1} w(- \! \log y) \, dy = \int_0^1 y^n v(y) \, dy,
$$
using the substitution $y=e^{-s}$ and setting $v(y) := y w(- \! \log y)$ for $y \in (0,1]$. Note that $v$ is continuous on $(0,1]$ and it extends continuously to $y=0$ by setting $v(0)=0$, since $|v(y)| \leq y |w(- \! \log y)| \leq y \| w \|_{L^\infty(\R_+)}  \to 0$ as $y \to 0^+$. Hence we deduce that
\be \label{eq:moments}
\int_0^1 y^n v(y) \, dy = 0 \quad \mbox{for $n=0,1,2,\ldots$}
\ee
for the function $v \in C^0([0,1])$. But this implies that $v(y) \equiv 0$. Indeed, let $\eps > 0$ be given and choose a polynomial $p(x)$ such that $\|v-p\|_{L^\infty([0,1])} \leq \eps$ thanks to the Weierstrass approximation theorem. Observe 
\begin{align*}
\int_0^1 v(y)^2 \, dy & = \int_0^1(v(y)-p(y))v(y) \, dy + \underbrace{\int_0^1 p(y) v(y) \, dy}_{\displaystyle =0 \ \ \mbox{by \eqref{eq:moments}}} \\
& \leq \| v-p \|_{L^\infty([0,1])} \cdot \| v \|_{L^\infty([0,1])} \leq \eps \| v \|_{L^\infty([0,1])}.
\end{align*}
Since $\eps > 0$ is arbitrary, we conclude that $v \in C^0([0,1])$ must be $v \equiv 0$, which implies that $w(s) = e^{s} v(e^{-s}) = 0$ for all $s \geq 0$. 
\end{proof}

\subsection{Perron--Frobenius Type Result}

We have the following quite elementary version of a Perron--Frobenius type result, which is sufficient for our purposes to prove Theorem \ref{thm:nondeg}. Since readers might be less familiar with Perron--Frobenius theory (or the more general Krein--Rutnam theory), we provide the details needed in our context.
 
\begin{lem} \label{lem:perron}
Let $\Omega \subset \R^n$ be open and non-empty. Suppose that $A : L^2(\Omega) \to L^2(\Omega)$ is a compact operator with
$$
Af(x) = \int_{\Omega} a(x,y) f(y) \, dy
$$
such that $a(x,y) > 0$ for almost every $x,y \in \Omega$. Then $r = \max \sigma(A) >0$ is a simple eigenvalue of $A$. Moreover, every eigenfunction $\psi \in L^2(\Omega)$ with $A\psi = r \psi$ satisfies $\pm \psi(x) > 0$ for almost every $x \in \Omega$. 
\end{lem}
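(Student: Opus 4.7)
My plan is to use the spectral theorem for compact self-adjoint operators (which is the setting relevant for the application in Theorem \ref{thm:nondeg}, where $a(x,y)=a(y,x)$) combined with the strict positivity of the kernel via a standard $|\psi|$-argument. The proof splits into three steps: first producing $r>0$ as a top eigenvalue, then showing that every associated eigenfunction has a definite sign, and finally deducing simplicity by an orthogonality argument.

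For the first step, the spectral theorem gives that $\pm \|A\|$ are both eigenvalues of $A$ (when $A\ne 0$) and that $r=\max\sigma(A)$ coincides with $\sup_{\|g\|=1}\langle g,Ag\rangle$. Taking any nonzero nonnegative $f\in L^2(\Omega)$, the pointwise positivity $a(x,y)>0$ gives $Af(x)=\int_\Omega a(x,y)f(y)\,dy>0$ for a.e.\ $x\in\Omega$, so $\langle f,Af\rangle>0$. Hence $r>0$ and $r$ (rather than $-\|A\|$) is attained; compactness then guarantees $r$ is an eigenvalue.

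For the second step, suppose $A\psi=r\psi$ with $\psi\in L^2(\Omega)\setminus\{0\}$. The triangle inequality applied inside the integral gives
\[
r|\psi(x)| \;=\; \Bigl|\int_\Omega a(x,y)\psi(y)\,dy\Bigr| \;\leq\; \int_\Omega a(x,y)|\psi(y)|\,dy \;=\; (A|\psi|)(x).
\]
Pairing with $|\psi|$ and using the variational characterization yields
\[
r\|\psi\|_{L^2}^2 \;\leq\; \langle |\psi|, A|\psi|\rangle \;\leq\; r\,\||\psi|\|_{L^2}^2 \;=\; r\|\psi\|_{L^2}^2,
\]
so equality holds throughout and $A|\psi|=r|\psi|$ a.e. Since $|\psi|\not\equiv 0$ is nonnegative, the kernel positivity forces $r|\psi|(x)=(A|\psi|)(x)>0$ for a.e.\ $x$, so $\psi(x)\neq 0$ a.e. Moreover, equality in the triangle inequality together with $a(x,\cdot)>0$ a.e.\ forces $\psi(y)$ to have a constant sign (independent of $y$) for a.e.\ $y$; thus either $\psi>0$ a.e.\ or $\psi<0$ a.e.

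The third step then yields simplicity easily: if the $r$-eigenspace had dimension $\geq 2$, we could pick two orthogonal eigenfunctions $\psi_1,\psi_2$. By the previous step, after changing signs we may assume $\psi_1>0$ and $\psi_2>0$ a.e., which forces $\langle \psi_1,\psi_2\rangle>0$, contradicting orthogonality. Hence the eigenspace is one-dimensional. The main delicate point I anticipate is making the equality case of the triangle inequality rigorous (ensuring that the sign of $\psi$ is constant a.e.\ and not merely locally constant), but this is handled cleanly by observing that the equality $(A|\psi|)(x)=|A\psi(x)|$ holding for a.e.\ $x$ together with $a(x,y)>0$ a.e.\ on $\Omega\times\Omega$ implies that the measurable function $\mathrm{sgn}\,\psi$ must be constant on a set of full measure in $\Omega$.
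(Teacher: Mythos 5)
Your proof is correct and takes essentially the same route as the paper: the variational characterization of $r=\sup_{\|g\|=1}\langle g, Ag\rangle$, the $|\psi|$-trick combined with strict kernel positivity, and the orthogonality contradiction for simplicity, the only minor variation being that you extract the constant sign of $\psi$ from the equality case of the triangle inequality, whereas the paper re-runs the maximizer argument on $|\psi|-\psi$. Two small remarks: for a compact self-adjoint $A \neq 0$ only one of $\pm\|A\|$ need be an eigenvalue (your argument does not actually use that parenthetical claim), and, like the paper's own proof, you tacitly add the self-adjointness hypothesis $a(x,y)=a(y,x)$, which is indeed satisfied in the application to Theorem \ref{thm:nondeg}.
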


\begin{proof}
We set $\| \cdot \| = \| \cdot \|_{L^2(\Omega)}$ in what follows and define
$$
r := \sup_{\phi \in L^2, \| \phi \| = 1} (\phi, A \phi) .
$$
Note that $r > 0$ holds, which follows from taking some $\phi \in C^\infty_c(\Omega)$ with $\phi \geq 0$ and $\phi \not \equiv 0$, which implies that $(\phi, A \phi) > 0$ using that $a(x,y) > 0$ almost everywhere. This shows that $r > 0$ holds. Furthermore, since $A$ is compact it is easy to that the supremum above is attained. Let $\psi \in L^2(\Omega)$ be a maximizer. Since $A=A^*$ is self-adjoint, we find that $\psi$ is an eigenfunction with $A \psi = r \psi$. In particular, we see that $r > 0$ is an eigenvalue and it is easy to see that $r = \max \sigma(A)$ holds from the construction.

Suppose now that $\psi \in L^2(\Omega)$ is an eigenfunction with $A \psi = r \psi$. We claim that $|\psi| \geq 0$ is also an eigenfunction for the same eigenvalue $r$. Indeed, using that $a(x,y) > 0$ almost everywhere, we find
$$
 r= (\psi, A \psi) \leq  (|\psi|, A |\psi|) \leq r \| |\psi| \|^2 = r \| \psi \|^2 = r. 
$$ 
Hence $|\psi|$ is also a maximizer and must satisfy $A|\psi| = r |\psi|$. From $|\psi| \geq 0$ with $|\psi| \not \equiv 0$ and $r>0$, this implies that $|\psi(x)| > 0$ for almost every $x \in \Omega$. Now we consider $\phi := |\psi| - \psi \geq 0$, which also satisfies $A \phi = r \phi$. If $\phi(x) = 0$ almost everywhere, then $\psi(x) = |\psi(x)| > 0$ almost everywhere. If $\phi \not \equiv 0$, we re-run the argument above to conclude that $|\phi(x)| = | |\psi(x)| - \psi(x)| > 0$ almost everywhere. But this implies that $\pm \psi(x) >0$ almost everywhere.

Finally, we prove that $r$ is a simple eigenvalue. To show this, let $\psi_1, \psi_2 \in L^2(\Omega)$ be two linearly independent eigenfunctions with $A \psi_k = r \psi_k$ for $k=1,2$. Thus we can find another eigenfunction $\phi = \psi_1 + \gamma \psi_2 \not \equiv 0$ by choosing $\gamma \in \R$ such that $(\phi, \psi_1)=0$. Since $A \phi = r\phi$, we deduce that $\pm \phi(x) > 0$ almost everywhere. Since also $\pm \psi_1(x) > 0$ almost everywhere by the previous discussion, we obtain that $\pm (\phi, \psi_1) > 0$. But this contradicts the orthogonality condition $(\phi, \psi_1) =0$. Hence the eigenvalue $r$ must be simple.
\end{proof}

\section{Moving Plane Argument}

\label{sec:moving_plane}

In this section, we provide the details of the moving plane method used to complete the proof of Lemma \ref{lem:symmetry} above. We adapt the arguments in \cite{AhLe-22} and reference given there.

For $\lambda > 0$ and $x\in \R$, we set 
\begin{align*}
    &\Sigma_\lambda := [\lambda
    ,\infty), &  &x_\lambda := 2\lambda -x 
    , & &u_\lambda(x) := u(x_\lambda), & &\Sigma_\lambda^u := \{x\in \Sigma_\lambda : u(x) > u_\lambda(x) \}.
\end{align*}
We recall the integral representation \begin{align*}
    u(x) = \int_\mathbb{R} G(x-y) K(y) e^{u(y)} \, \mathrm{d}y + C \quad \mbox{with} \quad G(x) = -c_\alpha |x|^{2 \alpha}
\end{align*}
and some constant $C \in \mathbb{R}$. We initiate the moving plane method by showing that $\Sigma_\lambda^u$ is empty in the regime of sufficiently large $\lambda$.

\begin{prop}\label{prop:movingplane1}
    There exists $\lambda_0 > 0$ such that $\Sigma_\lambda^u = \emptyset$ for all $\lambda > \lambda_0$.
\end{prop}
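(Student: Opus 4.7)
The plan is to work from the integral representation of $u$ and derive, for $x \in \Sigma_\lambda^u$, an integral inequality on $h(x) := u(x) - u_\lambda(x)$ whose kernel can be bounded independently of $x$; a self-improvement then forces $h \equiv 0$ on $\Sigma_\lambda^u$ for $\lambda$ large enough. First I would symmetrize the integral representation via the substitution $y \mapsto y_\lambda = 2\lambda - y$ in the part over $(-\infty, \lambda)$, arriving at
\[
u(x) - u_\lambda(x) = c_\alpha \int_{\Sigma_\lambda} \bigl[|x_\lambda - y|^{2\alpha} - |x-y|^{2\alpha}\bigr] \bigl[K(y)e^{u(y)} - K(y_\lambda)e^{u_\lambda(y)}\bigr]\, dy.
\]
Two elementary ingredients then drive the analysis: for $x, y \in \Sigma_\lambda$ one has $|x_\lambda - y| \geq |x-y|$, and for $\lambda \geq 0$ one has $|y_\lambda| \leq |y|$, hence $K(y) \leq K(y_\lambda)$ by Assumption \textbf{(A)}.

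On $\Sigma_\lambda \setminus \Sigma_\lambda^u$ the two brackets have opposite signs and the integrand is non-positive; on $\Sigma_\lambda^u$, discarding the non-positive contribution $(K(y) - K(y_\lambda)) e^{u_\lambda(y)}$ and invoking the elementary estimate $e^{u(y)} - e^{u_\lambda(y)} \leq e^{u(y)} h(y)$ yields
\[
h(x) \leq c_\alpha \int_{\Sigma_\lambda^u} \bigl[|x_\lambda - y|^{2\alpha} - |x-y|^{2\alpha}\bigr] K(y) e^{u(y)} h(y)\, dy \qquad \text{for } x \in \Sigma_\lambda^u.
\]

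The decisive step is to eliminate the $x$-dependence of the kernel. Writing $r = x-\lambda$ and $s = y - \lambda \geq 0$, we have $|x_\lambda - y| = r+s$ and $|x-y| = |r-s|$; since $2\alpha \in (0,1)$, subadditivity of $t \mapsto t^{2\alpha}$ on $[0,\infty)$ gives
\[
(r+s)^{2\alpha} - |r-s|^{2\alpha} \leq \bigl(2\min(r,s)\bigr)^{2\alpha} \leq 4^\alpha (y-\lambda)^{2\alpha}.
\]
Hence $h(x) \leq 4^\alpha c_\alpha \int_{\Sigma_\lambda^u} (y-\lambda)^{2\alpha} K(y) e^{u(y)} h(y)\, dy$ uniformly in $x \in \Sigma_\lambda^u$. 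The right-hand side is finite because Lemma \ref{lem:decay} yields $K(y)e^{u(y)} \leq C e^{-a|y|^{2\alpha}}$ while the integral representation bounds $|h(y)| \lesssim 1 + |y|^{2\alpha} + \lambda^{2\alpha}$; therefore $\|h\|_{L^\infty(\Sigma_\lambda^u)} < \infty$.

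Inserting the uniform bound $h(y) \leq \|h\|_{L^\infty(\Sigma_\lambda^u)}$ and $(y-\lambda)^{2\alpha} \leq y^{2\alpha}$ for $y \geq \lambda \geq 0$ produces the self-improving inequality
\[
\|h\|_{L^\infty(\Sigma_\lambda^u)} \leq 4^\alpha c_\alpha\, \eps_\lambda\, \|h\|_{L^\infty(\Sigma_\lambda^u)}, \qquad \eps_\lambda := \int_\lambda^\infty y^{2\alpha} K(y) e^{u(y)}\, dy,
\]
where $\eps_\lambda \to 0$ as $\lambda \to \infty$ by dominated convergence. Choosing $\lambda_0$ so that $4^\alpha c_\alpha\, \eps_{\lambda_0} < 1$ then collapses $\|h\|_{L^\infty(\Sigma_\lambda^u)}$ to $0$, forcing $\Sigma_\lambda^u = \emptyset$ for all $\lambda > \lambda_0$. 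The only delicate point I anticipate is rigorously securing the a priori $L^\infty$-bound on $h$; once the $x$-independent kernel estimate is in place, the contraction closes the argument immediately.
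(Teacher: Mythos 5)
Your argument is correct, and its first half (the symmetrized identity over $\Sigma_\lambda$, the sign analysis using $|x_\lambda-y|\geq|x-y|$ and $K(y)\leq K(y_\lambda)$, the restriction to $\Sigma_\lambda^u$, and the mean-value linearization $e^{u}-e^{u_\lambda}\leq e^{u}(u-u_\lambda)$) coincides with the paper's proof. Where you diverge is in the quantitative closing step. The paper estimates the kernel crudely by $-G(x_\lambda-y)\leq c_\alpha\left(|x|^{2\alpha}+|y|^{2\alpha}\right)$, which is not uniform in $x$, and therefore runs the contraction in the weighted norm $\left\|\langle x\rangle^{-2}(u-u_\lambda)\right\|_{L^1(\Sigma_\lambda^u)}$, with smallness coming from $\sup_{y\geq\lambda}\left((1+|y|^{2\alpha})\langle y\rangle^{2}K(y)e^{u(y)}\right)\to 0$; it concludes that $\Sigma_\lambda^u$ has measure zero and then invokes continuity of $u-u_\lambda$. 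You instead exploit the cancellation in the kernel via subadditivity of $t\mapsto t^{2\alpha}$, obtaining the $x$-independent bound $(r+s)^{2\alpha}-|r-s|^{2\alpha}\leq 4^\alpha (y-\lambda)^{2\alpha}$, which lets you contract directly in the sup norm on $\Sigma_\lambda^u$ with the smallness parameter $\eps_\lambda=\int_\lambda^\infty y^{2\alpha}K e^{u}\,dy\to 0$ (a tail of an integrable function, thanks to the decay $Ke^u\lesssim e^{-a|y|^{2\alpha}}$). This is slightly cleaner: the weight $\langle x\rangle^{-2}$ is unnecessary, and emptiness of $\Sigma_\lambda^u$ follows pointwise without the final measure-zero-plus-continuity step. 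The one point you rightly flag — finiteness of $\sup_{\Sigma_\lambda^u}(u-u_\lambda)$ — is indeed secured the way you indicate: the right-hand side of your pointwise inequality does not depend on $x$ and is absolutely convergent, since the integral representation (with $|y|^{2\alpha}Ke^u\in L^1$, established in the proof of Lemma \ref{lem:symmetry}) gives $|u(y)-u_\lambda(y)|\lesssim 1+|y|^{2\alpha}+\lambda^{2\alpha}$ while $Ke^u$ decays super-polynomially; together with $u-u_\lambda>0$ on $\Sigma_\lambda^u$ this bounds the sup. For the uniform choice of $\lambda_0$ you implicitly use that $\eps_\lambda$ is nonincreasing in $\lambda$, which is immediate. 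One caveat if you intend to continue: the paper reuses the explicit constant $C(\lambda)$ from its weighted-$L^1$ estimate in the continuation step (Proposition \ref{prop:movingplane2}), so there you would need the analogous observation that your $\eps_\lambda$-type quantity, restricted to $\Sigma_\lambda^u$, degenerates as $\lambda\nearrow\lambda_0$; this adapts without difficulty.
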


\begin{proof}
    Let $\lambda > 0$. Using the integral representation and the fact that $G$ is an even function, a calculation yields \begin{align*}
        u(x)-u_\lambda(x) = \int_{\Sigma_\lambda} (G(x-y)-G(x_\lambda-y))\left( K(y) e^{u(y)} - K(y_\lambda) e^{u_\lambda(y)}\right) \, dy.
    \end{align*}
    Since $G$ and $K$ are even and monotone decreasing in $|x|$, we have
    $$
    0 \geq G(x-y) \geq G(x_\lambda-y) \quad \mbox{and} \quad 0 \leq K(y) \leq K(y_\lambda) \quad \mbox{for $x,y \in \Sigma_\lambda$}.
    $$
    For any $x,y \in \Sigma_\lambda^u \subset \Sigma_\lambda$, we thus estimate 
    \begin{align*}
        u(x) - u_\lambda(x) &\leq \int_{\Sigma_\lambda^u} (G(x-y)-G(x_\lambda-y)) \left(K(y) e^{u(y)}-K(y_\lambda) e^{u_\lambda(y)}\right) \, dy \\
        &\leq \int_{\Sigma_\lambda^u} (G(x-y)-G(x_\lambda-y)) F_\lambda(y) \, dy,
    \end{align*}
    where we denote \begin{align*}
        F_\lambda(y) := K(y) e^{u(y)} (u(y)-u_\lambda(y)).
    \end{align*}
    Note that $F_\lambda(y) \geq 0$ for $y \in \Sigma_\lambda^u$. Next, we observe the upper bound 
    $$
        -G(x_\lambda-y) = c_\alpha |2\lambda -x -y|^{2\alpha} \leq c_\alpha |x + y|^{2 \alpha} \leq c_\alpha \left(|x|^{2 \alpha} + |y|^{2 \alpha}\right),
    $$
    for $x,y \in \Sigma_\lambda$. Thus, for $x \in \Sigma_\lambda^u$, 
    \begin{align*}
        0 < u(x) - u_\lambda(x) \leq c_\alpha \int_{\Sigma_\lambda^u} \left(|x|^{2\alpha} + |y|^{2 \alpha}\right) F_\lambda(y) \, dy.
    \end{align*}
    Using the standard notation $\langle x \rangle = \sqrt{1+x^2}$, we deduce that
    \begin{align*}
        \left|\left|\langle x\rangle^{-2}(u-u_\lambda)\right|\right|_{L^1(\Sigma_\lambda^u)} &\leq c_\alpha \int_{\Sigma_\lambda^u} \int_{\Sigma_\lambda^u} \langle x\rangle^{-2} \left(|x|^{2 \alpha} + |y|^{2\alpha}\right) F_\lambda(y) \, \mathrm{d}y \mathrm{d}x \\
        &\leq C(\alpha) \int_{\Sigma_\lambda^u} \left(1 + |y|^{2 \alpha}\right) F_\lambda(y) \, \mathrm{d}y
    \end{align*}
    with some constant $C(\alpha)>0$ independent of $\lambda >0$ and using that $\int_{\R} \langle x \rangle^{-2} |x|^{2 \alpha} \, dx < +\infty$ for $\alpha \in (0, \frac{1}{2})$. Therefore, we have found that 
    \begin{align*}
        &\left \| \langle x\rangle^{-2}(u-u_\lambda)\right \|_{L^1(\Sigma_\lambda^u)} \leq C(\alpha) \int_{\Sigma_\lambda^u} \left(1 + |y|^{2 \alpha}\right) K(y) e^{u(y)}(u(y)-u_\lambda(y)) \, dy \\
        &\leq C(\alpha) \sup_{y\geq \lambda} \left(\left(1 + |y|^{2 \alpha}\right) \langle y\rangle^2 K(y) e^{u(y)}\right) \left \|\langle x\rangle^{-2}(u-u_\lambda)\right \|_{L^1(\Sigma_\lambda^u)},
    \end{align*}
    where the constant $C(\alpha)>0$ is independent of $\lambda$. Recalling the decay estimae $0 \leq K e^u \leq C e^{-a |x|^{2 \alpha}}$ with some $a > 0$, we can find some constant $\lambda_0 > 0$ sufficiently large such that \begin{align*}
        \left|\left|\langle x\rangle^{-2}(u-u_\lambda)\right|\right|_{L^1(\Sigma_\lambda^u)} \leq \frac{1}{2} \left|\left|\langle x\rangle^{-2}(u-u_\lambda)\right|\right|_{L^1(\Sigma_\lambda^u)},
    \end{align*}
    for $\lambda > \lambda_0$. Hence the set $\Sigma^u_\lambda$ has measure zero for $\lambda > \lambda_0$, which implies that $\Sigma_\lambda^u= \emptyset$ for $\lambda> \lambda_0$ by continuity of $u-u_\lambda$.
    \end{proof}

As a next step, we establish the following continuation property.

\begin{prop} \label{prop:movingplane2}
    Suppose that $\lambda_0 > 0$ satisfies $\Sigma_\lambda^u = \emptyset$ for all $\lambda > \lambda_0$ and assume that $u \not\equiv u_{\lambda_0}$ on $\Sigma_{\lambda_0}$. Then there exists $\eps > 0$ such that $\Sigma_\lambda^u = \emptyset$ for all $\lambda > \lambda_0 - \eps$.
\end{prop}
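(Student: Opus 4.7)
My approach follows the standard continuation step of the moving plane method, combining a strong maximum principle with the integral contraction estimate from the proof of Proposition~\ref{prop:movingplane1}.

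First, I would upgrade the weak inequality $u \leq u_{\lambda_0}$ on $\Sigma_{\lambda_0}$ (inherited by continuity from the assumption $\Sigma^u_\lambda = \emptyset$ for $\lambda > \lambda_0$) to the strict inequality $u(x) < u_{\lambda_0}(x)$ for every $x > \lambda_0$. Starting from the identity
$$
u_{\lambda_0}(x) - u(x) = \int_{\Sigma_{\lambda_0}} \bigl(G(x-y) - G(x_{\lambda_0}-y)\bigr) \bigl(K(y_{\lambda_0}) e^{u_{\lambda_0}(y)} - K(y) e^{u(y)}\bigr)\, dy,
$$
I note that for $x > \lambda_0$ the kernel $G(x-y) - G(x_{\lambda_0}-y) = c_\alpha(|x_{\lambda_0}-y|^{2\alpha} - |x-y|^{2\alpha})$ is strictly positive whenever $y > \lambda_0$, the second factor is pointwise non-negative on $\Sigma_{\lambda_0}$ (by monotonicity of $K$ and $u \leq u_{\lambda_0}$), and the hypothesis $u \not\equiv u_{\lambda_0}$ combined with $K > 0$ makes the second factor strictly positive on a set of positive measure. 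Hence the strict inequality follows.

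Second, using the decay estimate of Lemma~\ref{lem:decay} together with $\|K\|_{L^\infty} < \infty$, which yield $K(x) e^{u(x)} \leq C e^{-a|x|^{2\alpha}}$ for some $a, C > 0$, I fix $R > \lambda_0$ so large that
$$
C(\alpha)\sup_{|y| \geq R} (1+|y|^{2\alpha})\langle y\rangle^2 K(y) e^{u(y)} \leq \tfrac{1}{2},
$$
where $C(\alpha)$ is the constant from the proof of Proposition~\ref{prop:movingplane1}. I then argue by contradiction: assuming a sequence $\lambda_n \to \lambda_0^-$ with $\Sigma^u_{\lambda_n} \neq \emptyset$, I decompose $\Sigma^u_{\lambda_n} = (\Sigma^u_{\lambda_n} \cap [\lambda_n, R]) \cup (\Sigma^u_{\lambda_n} \cap (R,\infty))$. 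For any fixed small $\eta > 0$, the strict inequality of the first step and uniform continuity on the compact set $[\lambda_0 + \eta, R]$ give a gap $u_{\lambda_0} - u \geq \delta_\eta > 0$; this gap survives the perturbation $\lambda_0 \rightsquigarrow \lambda_n$ for $n$ large, so $\Sigma^u_{\lambda_n} \cap [\lambda_0+\eta, R] = \emptyset$ and therefore $\Sigma^u_{\lambda_n} \subseteq [\lambda_n, \lambda_0 + \eta] \cup (R, \infty)$. Feeding this splitting into the integral estimate from the proof of Proposition~\ref{prop:movingplane1} yields
$$
\| \langle x\rangle^{-2}(u - u_{\lambda_n}) \|_{L^1(\Sigma^u_{\lambda_n})} \leq \bigl(\tfrac{1}{2} + \kappa_n\bigr) \| \langle x\rangle^{-2}(u - u_{\lambda_n}) \|_{L^1(\Sigma^u_{\lambda_n})},
$$
where the $\tfrac{1}{2}$ comes from the tail via the second step, and $\kappa_n$ is the contribution of the interval $[\lambda_n, \lambda_0 + \eta]$. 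Using $|u(y) - u_{\lambda_n}(y)| = |u(y) - u(2\lambda_n - y)|$ on this interval and the modulus of continuity of $u$, one can choose $\eta = \eta_n \to 0$ together with $\lambda_n \to \lambda_0$ so that $\kappa_n \to 0$, forcing the norm to vanish and hence $\Sigma^u_{\lambda_n} = \emptyset$ for $n$ large, a contradiction.

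The main obstacle is the careful control of the contribution $\kappa_n$ on the shrinking interval $[\lambda_n, \lambda_0 + \eta_n]$: the weight $(1+|y|^{2\alpha})$ there remains of order $1$, so one must genuinely exploit the smallness of $|u - u_{\lambda_n}|$ via the local Hölder regularity of $u$ from Theorem~\ref{thm:reg_sym}, while simultaneously keeping $\eta_n$ large enough that the gap estimate on $[\lambda_0 + \eta_n, R]$ still survives the perturbation from $\lambda_0$ to $\lambda_n$. Balancing these two competing requirements is the delicate bookkeeping at the heart of the continuation step.
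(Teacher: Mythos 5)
Your Step 1 is correct, and it actually takes a different (somewhat more elementary) route than the paper: you extract the strict inequality $u(x)<u_{\lambda_0}(x)$ for $x>\lambda_0$ directly from the reflected integral identity, using that the kernel $G(x-y)-G(x_{\lambda_0}-y)=c_\alpha\bigl(|x_{\lambda_0}-y|^{2\alpha}-|x-y|^{2\alpha}\bigr)$ is strictly positive for $x,y>\lambda_0$ and that $K(y_{\lambda_0})e^{u_{\lambda_0}(y)}-K(y)e^{u(y)}\geq 0$ with strict inequality on a set of positive measure, whereas the paper evaluates $(-\Delta)^s(u_{\lambda_0}-u)$ pointwise at a touching point (using the $\Cloc^{2s+\gamma}$ regularity) and compares signs. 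Both arguments work; yours avoids invoking the pointwise form of the equation.

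The genuine gap is in Step 2, exactly at the point you label ``delicate bookkeeping'': the near-plane contribution cannot be made into a small multiplicative factor $\kappa_n$ by exploiting the smallness of $|u-u_{\lambda_n}|$ on $[\lambda_n,\lambda_0+\eta_n]$. The term you must absorb,
$$
C(\alpha)\int_{\Sigma^u_{\lambda_n}\cap[\lambda_n,\lambda_0+\eta_n]}\bigl(1+|y|^{2\alpha}\bigr)K(y)e^{u(y)}\bigl(u(y)-u_{\lambda_n}(y)\bigr)\,dy ,
$$
is homogeneous of degree one in $u-u_{\lambda_n}$, and so is the norm $\|\langle x\rangle^{-2}(u-u_{\lambda_n})\|_{L^1(\Sigma^u_{\lambda_n})}$ against which you want to compare it; if $\Sigma^u_{\lambda_n}$ happens to lie entirely in the near interval, the ratio of the two is of size $\sup_{y\approx\lambda_0}(1+|y|^{2\alpha})\langle y\rangle^{2}K(y)e^{u(y)}$, a fixed positive constant, no matter how small the modulus of continuity makes $u-u_{\lambda_n}$ (note also that $\lambda_n$ is dictated by the contradiction hypothesis, so only $\eta_n$ is yours to choose). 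The smallness must instead come from the other factor in the bilinear estimate, namely the $x$-integration of the kernel over the bad set: keeping $\int_{\Sigma^u_{\lambda_n}}\langle x\rangle^{-2}\bigl(|x|^{2\alpha}+|y|^{2\alpha}\bigr)\,dx$ rather than bounding it crudely by $C(\alpha)(1+|y|^{2\alpha})$, your own localization $\Sigma^u_{\lambda_n}\subset[\lambda_n,\lambda_0+\eta_n]\cup(R,\infty)$ bounds it by $(1+|y|^{2\alpha})\,\theta_n$ with $\theta_n$ the integral of the integrable weight $\langle x\rangle^{-2}(1+|x|^{2\alpha})$ over $[\lambda_n,\lambda_0+\eta_n]\cup(R,\infty)$, which is small once $R$ is large and $\eta_n$, $|\lambda_n-\lambda_0|$ are small; the $y$-side then only needs the bounded quantity $\sup_y(1+|y|^{2\alpha})\langle y\rangle^{2}K e^{u}<\infty$. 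This is precisely the mechanism in the paper's proof, where the constant $C(\lambda)$ tends to $0$ as $\lambda\nearrow\lambda_0$ by dominated convergence because Step 1 forces $\Sigma^u_\lambda$ to shrink to the null set $\{\lambda_0\}$. With that replacement your contradiction argument closes; as written, the proposed route to $\kappa_n\to 0$ does not.
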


\begin{proof}
    We divide the proof into the following steps. \\ \\
    \textbf{Step 1.} By assumption, we have $u(x) \leq u_\lambda(x)$ for all $x\geq \lambda$ and $\lambda > \lambda_0$. By continuity, we conclude that $u(x) \leq u_{\lambda_0}(x)$ for all $x\geq \lambda_0$. This shows that $\Sigma_{\lambda_0}^u = \emptyset$ holds. To show that we indeed have $\Sigma_\lambda^u = \emptyset$ for any $\lambda > \lambda_0-\eps$ with some $\eps > 0$, we argue as follows. First, we claim that the strict inequality holds: \begin{align}\label{eq:symAsLemProof}
        u(x) < u_{\lambda_0}(x) \quad \mbox{for all $x > \lambda_0$}.
    \end{align}
 We argue by contradiction. Suppose that $u(x) = u_{\lambda_0}(x)$ for some $x>\lambda_0$. Note that $\Ds u = K e^u$ and from our assumption that $K \in C^1$, we can deduce that $u \in \Cloc^{2s+\gamma}(\R)$ (see also the proof of Theorem \ref{thm:big} (iii) for details). Thus $\Ds u(x) = e^{u(x)}$ holds pointwise in $\R$. Since $K$ is monotone-decreasing in $|x|$, we find 
    \be  \label{eq:movingplane1}
        (-\Delta)^s (u_{\lambda_0}-u)(x) = (K(x_{\lambda_0}) - K(x)) e^{u(x)} \geq 0.
    \ee
    On the other hand, using that $u(x) = u_{\lambda_0}(x)$ for some $x > \lambda_0$, we conclude 
    \begin{align*}
        (-\Delta)^s(u_{\lambda_0}-u)(x)  &= -C_s \, \mathrm{PV} \int_\mathbb{R} \frac{u_{\lambda_0}(y)-u(y)}{|x-y|^{1+2s}} \, dy \\
        = -C_s \, \mathrm{PV} &\int_{\lambda_0}^\infty \left(\frac{1}{|x-y|^{1+2s}} - \frac{1}{|x-y_{\lambda_0}|^{1+2s}}\right)(u_{\lambda_0}-u)(y) \, dy \leq 0.
    \end{align*}
    In view of \eqref{eq:movingplane1}, we must have equality to zero above. Since $x>\lambda_0$, we deduce that $u \equiv u_{\lambda_0}$ on $\Sigma_{\lambda_0}$. This contradicts our assumption that $u \not \equiv u_{\lambda_0}$ on $\Sigma_{\lambda_0}$. This proves  \eqref{eq:symAsLemProof}. \\ \\
    \textbf{Step 2.} Similarly to the proof of Proposition \ref{prop:movingplane1}, we get the estimate 
    $$
        \left|\left|\langle x\rangle^{-2}(u-u_\lambda)\right|\right|_{L^1(\Sigma_\lambda^u)}\leq C(\lambda) \sup_{y\geq \lambda} \left(\left(1 + |y|^{2\alpha }\right) \langle y\rangle^2 K(y) e^{u(y)}\right) \left \| \langle x\rangle^{-2}(u-u_\lambda)\right \|_{L^1(\Sigma_\lambda^u)}.
    $$
    Now, in view of \eqref{eq:symAsLemProof}, we conclude that the set \begin{align*}
        \overline{\Sigma_{\lambda_0}^u} := \{x \in \Sigma_{\lambda_0} : u(x) \geq u_{\lambda_0}(x)\} = \{x= \lambda_0\}
    \end{align*}
    has Lebesgue measure zero. Since $\lim_{\lambda \nearrow \lambda_0} \Sigma_\lambda^u \subset \overline{\Sigma_{\lambda_0}^u}$, and by inspecting the expression for $C(\lambda)$ in the proof of Proposition \ref{prop:movingplane1}, we deduce that $C(\lambda) \rightarrow 0$ as $\lambda \nearrow \lambda_0$. Thus, for some $\eps> 0$ sufficiently small, we find that 
    \begin{align*}
        \left \| |\langle x\rangle^{-2}(u-u_\lambda)\right \|_{L^1(\Sigma_\lambda^u)} \leq \frac{1}{2} \left \|\langle x\rangle^{-2}(u-u_\lambda)\right \|_{L^1(\Sigma_\lambda^u)}
    \end{align*}
    for $\lambda > \lambda_0 - \eps$. It follows that $\Sigma_\lambda^u$ has measure zero and thus $\Sigma_\lambda^u = \emptyset$ by the continuity of $u-u_\lambda$. This completes the proof.
\end{proof}

We are now ready to complete the proof of the symmetry result stated in Lemma \ref{lem:symmetry} as follows. Let us consider 
\begin{equation*}
        \lambda_* := \inf\left\{\lambda>0 : \Sigma_{\lambda'}^u = \emptyset \text{ for all } \lambda' > \lambda \right\} \geq 0,
    \end{equation*}
     which is is well-defined thanks to Proposition \ref{prop:movingplane1}. Suppose now that $\lambda_* > 0$ holds. By Proposition \ref{prop:movingplane2}, we must have $u \equiv u_{\lambda_*}$ on $\Sigma_{\lambda_*}= [\lambda_*, \infty)$. But this implies that $u$ is even with respect $\lambda_*$. Indeed, let $x > 0$ be arbitrary. Since $\lambda_* + x \in  \Sigma_{\lambda_*}$ and $u \equiv u_{\lambda_*}$ on $\Sigma_{\lambda_*}$, we conclude $u(\lambda_*+x) = u_{\lambda_*}(\lambda_*+x) = u(\lambda_*-x)$ as claimed, and the later identity trivially extends to all $x \in \R$. 
     
     Assume now that $\lambda_* = 0$ holds. Since $K(x) = K(-x)$ is even, the function $\Tilde{u}(x) = u(-x)$ is also a solution of $(-\Delta)^s u = K e^u$. By re-running the argument above, we conclude that $\Tilde{u}$ is either symmetric around some $\lambda_*>0$ and thus, $u$ is symmetric around $-\lambda_*$, or we have $\lambda_*=0$ and hence $u$ is symmetric around $0$. In summary, we conclude that $u$ is symmetric around some point $x_* = \pm \lambda_* \in \mathbb{R}$, i.\,e., it holds
    $$
    u(x_0+x) = u(x_0-x) \quad \mbox{for all $x \in \R$}.
    $$  
    
    Next, we show that $u$ is symmetric-decreasing around $x_*$. Without loss of generality we assume that $y>x>x_*\geq 0$. We define $\lambda := \frac{x+y}{2}>x_*$, which implies that $x = 2\lambda - y = y_\lambda$. By construction, we have $\Sigma^u_\lambda = \emptyset$ and thus it follows
    \begin{equation*}
        u(y) \leq u_\lambda(y) = u(y_\lambda) = u(x).
    \end{equation*}
    This shows that $u$ is symmetric-decreasing with respect to $x_*$.
    
    Finally, we show that $x_* \neq 0$ implies that $K(x) \equiv \mbox{const}.$ holds. Indeed, we can assume $x_* = \lambda_* > 0$ without loss of generality and thus $u \equiv u_{\lambda_*}$ on $\Sigma_{\lambda_*}$. From the argument in the proof of Proposition \ref{prop:movingplane2} above, we deduce that
    $$
    0=\Ds( u_{\lambda_*} - u)(y) = (K(y_{\lambda_*}) - K(y)) e^{u(y)} \quad \mbox{for $y \in \Sigma_{\lambda_*}$}.
    $$
    Thus $K_{\lambda_*} \equiv K$ on $\Sigma_{\lambda_*}$, which means that $K$ is symmetric with respect to $\lambda_* > 0$, i.\,e.,
    $$
    K(\lambda_* - x) = K(\lambda_* + x) \quad \mbox{for $x \in \R$}.
    $$
    In particular, this implies that $K(0) = K(2 \lambda_*)$. On the other hand, since $K(x) = K(-x)$ is assumed to be even and monotone-decreasing in $|x|$, we find $K(\lambda_*) \leq K(0) = K(2 \lambda_*)$. Since $K$ is monotone-decreasing in $|x|$, this implies $K(x) \equiv K(0)$ on $[-2 \lambda_*, 2 \lambda_*]$. Next, we note that $K(3 \lambda_*) = K(-\lambda_*) = K(0)$. Likewise, we find that $K(x) \equiv K(0)$ on $[-3 \lambda_0, 3 \lambda_0]$. A simple iteration now yields $K(x) \equiv K(0)$ on $[-n \lambda_0, n \lambda_0]$ for any $n \in \N$, which implies that $K(x) \equiv K(0)$ on $\R$. \hfill $\qed$
    
\end{appendix}

\bibliographystyle{siam}
\bibliography{GelfandBib}

\begin{thebibliography}{10}

\bibitem{AhLe-22}
{\sc M.~Ahrend and E.~Lenzmann}, {\em Uniqueness for the nonlocal {L}iouville
  equation in {$\Bbb{R}$}}, J. Funct. Anal., 283 (2022), pp.~Paper No. 109712,
  29.

\bibitem{Be-15}
{\sc W.~Beckner}, {\em Functionals for multilinear fractional embedding}, Acta
  Math. Sin. (Engl. Ser.), 31 (2015), pp.~1--28.

\bibitem{BrFaGr-24}
{\sc S.~Breteaux, J.~Faupin, and V.~Grasselli}, {\em On the number of bound
  states for fractional {S}chr{\"o}dinger operators with critical and
  super-critical exponent}, 2024.

\bibitem{CaDeDoFrHo-19}
{\sc J.~A. Carrillo, M.~G. Delgadino, J.~Dolbeault, R.~L. Frank, and
  F.~Hoffmann}, {\em Reverse {H}ardy-{L}ittlewood-{S}obolev inequalities}, J.
  Math. Pures Appl. (9), 132 (2019), pp.~133--165.

\bibitem{DaMa-17}
{\sc F.~Da~Lio and L.~Martinazzi}, {\em The nonlocal {L}iouville-type equation
  in {$\Bbb R$} and conformal immersions of the disk with boundary
  singularities}, Calc. Var. Partial Differential Equations, 56 (2017),
  pp.~Paper No. 152, 31.

\bibitem{DeMaPiPr-25}
{\sc A.~DelaTorre, G.~Mancini, A.~Pistoia, and L.~Provenzano}, {\em
  Non-degeneracy of the bubble in a fractional and singular 1d liouville
  equation}, 2025.

\bibitem{DoZh-15}
{\sc J.~Dou and M.~Zhu}, {\em Reversed {H}ardy-{L}ittewood-{S}obolev
  inequality}, Int. Math. Res. Not. IMRN,  (2015), pp.~9696--9726.

\bibitem{DuNg-22}
{\sc A.~T. Duong and V.~H. Nguyen}, {\em A {L}iouville-type theorem for
  fractional elliptic equation with exponential nonlinearity}, Mediterr. J.
  Math., 19 (2022), pp.~Paper No. 91, 16.

\bibitem{Fa-16}
{\sc M.~M. Fall}, {\em Entire {$s$}-harmonic functions are affine}, Proc. Amer.
  Math. Soc., 144 (2016), pp.~2587--2592.

\bibitem{Fa-07}
{\sc A.~Farina}, {\em Stable solutions of {$-\Delta u=e^u$} on {$\Bbb R^N$}},
  C. R. Math. Acad. Sci. Paris, 345 (2007), pp.~63--66.

\bibitem{FaWeYa-25}
{\sc M.~Fazly, J.~Wei, and W.~Yang}, {\em Classification of finite {M}orse
  index solutions of higher-order {G}elfand-{L}iouville equation}, Discrete
  Contin. Dyn. Syst., 45 (2025), pp.~2001--2044.

\bibitem{FrLe-13}
{\sc R.~L. Frank and E.~Lenzmann}, {\em Uniqueness of non-linear ground states
  for fractional {L}aplacians in {$\Bbb{R}$}}, Acta Math., 210 (2013),
  pp.~261--318.

\bibitem{FrLeSi-16}
{\sc R.~L. Frank, E.~Lenzmann, and L.~Silvestre}, {\em Uniqueness of radial
  solutions for the fractional {L}aplacian}, Comm. Pure Appl. Math., 69 (2016),
  pp.~1671--1726.

\bibitem{GeLe-24}
{\sc P.~G\'erard and E.~Lenzmann}, {\em The {C}alogero-{M}oser derivative
  nonlinear {S}chr\"odinger equation}, Comm. Pure Appl. Math., 77 (2024),
  pp.~4008--4062.

\bibitem{GiTr-01}
{\sc D.~Gilbarg and N.~S. Trudinger}, {\em Elliptic partial differential
  equations of second order}, Classics in Mathematics, Springer-Verlag, Berlin,
  2001.
\newblock Reprint of the 1998 edition.

\bibitem{HyYa-22}
{\sc A.~Hyder and W.~Yang}, {\em Classification of stable solutions to a
  non-local {G}elfand-{L}iouville equation}, Int. Math. Res. Not. IMRN,
  (2022), pp.~5219--5255.

\bibitem{La-25}
{\sc F.~P. Lanz}, {\em {Ph.D.} thesis}.
\newblock Work in preparation.

\bibitem{LaLe-25}
{\sc F.~P. Lanz and E.~Lenzmann}, {\em Global existence and {Blowup} for
  $\partial_t u + {(-\Delta)}^s u = e^u$ in $\mathbb{R}$}.
\newblock Work in preparation, 2026.

\bibitem{LiZh-95}
{\sc Y.~Li and M.~Zhu}, {\em Uniqueness theorems through the method of moving
  spheres}, Duke Math. J., 80 (1995), pp.~383--417.

\bibitem{LiLo-01}
{\sc E.~H. Lieb and M.~Loss}, {\em Analysis}, vol.~14 of Graduate Studies in
  Mathematics, American Mathematical Society, Providence, RI, second~ed., 2001.

\bibitem{NgNg-17}
{\sc Q.~A. Ng{\^o} and V.~H. Nguyen}, {\em Sharp reversed
  {H}ardy-{L}ittlewood-{S}obolev inequality on {${\bf R}^n$}}, Israel J. Math.,
  220 (2017), pp.~189--223.

\bibitem{Ro-14}
{\sc X.~Ros-Oton}, {\em Regularity for the fractional {G}elfand problem up to
  dimension 7}, J. Math. Anal. Appl., 419 (2014), pp.~10--19.

\bibitem{RoSe-14}
{\sc X.~Ros-Oton and J.~Serra}, {\em The extremal solution for the fractional
  {L}aplacian}, Calc. Var. Partial Differential Equations, 50 (2014),
  pp.~723--750.

\bibitem{ScSoVo-12}
{\sc R.~L. Schilling, R.~Song, and Z.~Vondra{\v c}ek}, {\em Bernstein
  functions}, vol.~37 of De Gruyter Studies in Mathematics, Walter de Gruyter
  \& Co., Berlin, second~ed., 2012.
\newblock Theory and applications.

\bibitem{Te-06}
{\sc J.~I. Tello}, {\em Stability of steady states of the {C}auchy problem for
  the exponential reaction-diffusion equation}, J. Math. Anal. Appl., 324
  (2006), pp.~381--396.

\bibitem{Xu-05}
{\sc X.~Xu}, {\em Uniqueness and non-existence theorems for conformally
  invariant equations}, J. Funct. Anal., 222 (2005), pp.~1--28.

\end{thebibliography}

\end{document}